\documentclass[reqno]{amsart}
\usepackage{amssymb}
\usepackage{graphicx}

\usepackage[usenames, dvipsnames]{color}
\usepackage{verbatim}
\usepackage{mathrsfs}
\usepackage{bm}
\usepackage{cite}
\usepackage{bbm}

\numberwithin{equation}{section}

\newtheorem{theorem}{Theorem}[section]
\newtheorem{corollary}[theorem]{Corollary}
\newtheorem{lemma}[theorem]{Lemma}
\newtheorem{prop}[theorem]{Proposition}

\theoremstyle{definition}
\newtheorem{remark}[theorem]{Remark}

\theoremstyle{definition}

\theoremstyle{definition}

\makeatletter
\def\dashint{\operatorname%
{\,\,\text{\bf-}\kern-.98em\DOTSI\intop\ilimits@\!\!}}
\makeatother

\def\\det{\text{\det}}

\def\Xint#1{\mathchoice
 {\XXint\displaystyle\textstyle{#1}}%
 {\XXint\textstyle\scriptstyle{#1}}%
 {\XXint\scriptstyle\scriptscriptstyle{#1}}%
 {\XXint\scriptscriptstyle\scriptscriptstyle{#1}}%
 \!\int}
\def\XXint#1#2#3{{\setbox0=\hbox{$#1{#2#3}{\int}$}
  \vcenter{\hbox{$#2#3$}}\kern-.5\wd0}}

\def\dashint{\Xint-}

\def\.5{\frac{1}{2}}

\newcommand{\RN}[1]{%
  \textup{\uppercase\expandafter{\romannumeral#1}}%
}

\newcounter{marnote}

\begin{document}

\title[Regularity for quasilinear parabolic equations]{Local regularity for solutions to singular and degenerate quasilinear parabolic equations}

\author[Y. Li]{Yan Li}
\address[Y. Li] {School of Mathematical Sciences, Beijing Normal University, Beijing 100875, China. }
\email{yanli25@mail.bnu.edu.cn}

\author[Z.W. Zhao]{Zhiwen Zhao}

\address[Z.W. Zhao]{School of Mathematics and Physics, University of Science and Technology Beijing, Beijing 100083, China.}
\email{zwzhao365@163.com;\,zwzhao365@gmail.com}


\date{\today} 


\maketitle
\begin{abstract}

The main purpose of this paper is to prove local H\"{o}lder continuity for weak solutions to quasilinear parabolic equations of $p$-Laplace type under finite
integrability assumptions on the forcing, covering the supercritical singular range and the entire degenerate regime.\ The key ingredient is an $L^\infty$-valued temporal H\"{o}lder estimate of the solution after any fixed positive waiting time, which is achieved by coupling a time-rescaling argument with De Giorgi iteration.\ We also provide counterexamples involving forcing terms with weaker spatial integrability to show that even bounded continuous solutions with compatible data may possess no positive temporal H\"{o}lder exponent.\ In particular, these examples establish the sharpness of the spatial integrability threshold in the degenerate case.\ Finally, a separate counterexample is constructed to demonstrate that the temporal Lipschitz endpoint can fail for bounded continuous forcing when it depends on time.

\end{abstract}

\maketitle

%

\section{Introduction}
For $n\geq2$, let $\Omega$ be a bounded domain in $\mathbb{R}^{n}$ with smooth boundary.\ Given $T>0$, define $\Omega_T=\Omega\times(-T,0]$, whose parabolic boundary is denoted by $\partial_{pa}\Omega_T$. Write
\begin{align}\label{CRI01}
\theta_{\mathrm{c}}=\frac{2n}{n+2},\quad n\geq2.	
\end{align}	
For $p\in(\theta_{\mathrm{c}},2)\cup(2,\infty)$, in this paper we consider the following boundary value problem:
\begin{equation}\label{PO001}
	\begin{cases}
		\partial_{t}u-\mathrm{div}\bigl(\mathbf{a}(x,\nabla u)\bigr)=b(x,t,u),&\text{in }\Omega_{T},\\
		u=g(x)\;\,\text{or}\;\,
		\mathbf{a}(x,\nabla u)\cdot\nu=0,&\text{on }\partial \Omega\times(-T,0],\\
		u(x,-T)=\phi(x),&\text{in }\Omega,
	\end{cases}
\end{equation}
where $g\in L^{\infty}(\partial\Omega)$, $\phi\in L^{\infty}(\Omega)$, $\nu$ stands for the outward unit normal to $\partial\Omega$. The two boundary conditions are alternatives, not simultaneous requirements. The vector field $\mathbf{a}:\Omega\times\mathbb{R}^{n}\to\mathbb{R}^{n}$ is measurable in $x$ and continuous in $\xi$.\ For almost every $x\in\Omega$, all $\xi,\eta\in\mathbb{R}^{n}$ and $\kappa>0$, assume that
\begin{align}\label{E01}
	\mathbf{a}(x,\xi)\cdot\xi\geq L_{1}|\xi|^{p},\quad |\mathbf{a}(x,\xi)|\le L_{2}|\xi|^{p-1},\quad \mathbf{a}(x,\kappa\xi)=\kappa^{p-1}\mathbf{a}(x,\xi),
\end{align}
and
\begin{align}\label{E32}
	\bigl(\mathbf{a}(x,\xi)-\mathbf{a}(x,\eta)\bigr)\cdot(\xi-\eta)\geq L_{3}
	\begin{cases}
		\frac{|\xi-\eta|^2}{(1+|\xi|+|\eta|)^{2-p}},&\text{for }\theta_{\mathrm{c}}<p<2,\\
		|\xi-\eta|^{p},&\text{for }p>2,
	\end{cases}	
\end{align}	
where $L_{i}>0$, $i=1,2,3$.\ From Chapter 12 in \cite{L2019}, we see that the classical flux $|\xi|^{p-2}\xi$ obeys these conditions.\ The lower-order source term $b:\Omega_{T}\times\mathbb{R}\to\mathbb{R}$ is measurable and satisfies
\begin{equation}\label{E02}
	|b(x,t,u)|\leq \psi(x,t),\quad\psi\geq0,\quad \psi\in L^{\frac{pl_{0}}{p-1}}(\Omega_{T})\cap L^{\frac{p}{p-1}}((-T,0);L^{q}(\Omega)).
\end{equation}
Here $l_{0}>\frac{n+p}{p}$, and 
\begin{equation}\label{eq:q-critical}
q\in(q_{\mathrm{c}},\infty),\quad	q_{\mathrm{c}}=
	\begin{cases}
	\frac{2np}{p(n+2)-2n},&\text{for }\theta_{\mathrm{c}}<p<2,\\
		n,&\text{for }p>2.
	\end{cases}
\end{equation}

 We say that a function $u\in C([-T,0];L^{2}(\Omega))\cap L^{p}((-T,0);W^{1,p}(\Omega))$ is called a weak solution to problem \eqref{PO001} provided that for every
$-T\leq t_{1}<t_{2}\leq0$,
\begin{align*}
	&\int_{\Omega}u\varphi\,dx\Big|_{t_{1}}^{t_{2}}+\int_{t_{1}}^{t_{2}}\int_{\Omega}\left(-u\partial_{t}\varphi
	+\mathbf{a}(x,\nabla u)\cdot\nabla\varphi\right)\,dxdt=\int_{t_{1}}^{t_{2}}\int_{\Omega}b(x,t,u)\varphi\,dxdt,
\end{align*}
for any $\varphi\in W^{1,2}((-T,0);L^{2}(\Omega))\cap L^{p}((-T,0);W^{1,p}_{0}(\Omega))$ in the Dirichlet case and $\varphi\in W^{1,2}((-T,0);L^{2}(\Omega))\cap L^{p}((-T,0);W^{1,p}(\Omega))$ in the Neumann case. For problem \eqref{PO001}, the assumed energy regularity of the solution, together with the Dirichlet trace condition, already entails $g\in W^{1-\frac{1}{p},p}(\partial\Omega)$, where $W^{1-\frac{1}{p},p}(\partial\Omega)$ denotes the fractional Sobolev trace space of $W^{1,p}(\Omega)$. In fact, recall the classical trace theorem: the operator
\begin{align}\label{TRACE}
	\mathrm{Tr}:W^{1,p}(\Omega)\longrightarrow W^{1-\frac{1}{p},p}(\partial \Omega)
\end{align}
is bounded and surjective and has kernel $W^{1,p}_{0}(\Omega)$, see p.\ 315 in \cite{B2011}.\ Making use of \eqref{TRACE}, we deduce
\begin{align*}
	\|g\|_{W^{1-\frac{1}{p},p}(\partial\Omega)}^{p}=&\frac{1}{T}\int_{-T}^{0}\|\mathrm{Tr}u(\cdot,t)\|_{W^{1-\frac{1}{p},p}(\partial\Omega)}^{p}dt\\
	\leq &\frac{C(\Omega,p)}{T}\int_{-T}^{0}
	\|u(\cdot,t)\|_{W^{1,p}(\Omega)}^{p}dt<\infty.
\end{align*}

Choose
\begin{equation}\label{eq:theta}
	\frac{q_{\mathrm{c}}}{q}<\vartheta<1,
\end{equation}
where $q$ and $q_{\mathrm{c}}$ are given by \eqref{eq:q-critical}.\ For $a\in\mathbb{R}$, let $a_+=\max\{a,0\}$.\ For convenience of notation, define
\begin{equation}\label{eq:unified-exponents}
	\begin{gathered}
		\beta=\frac{1-\vartheta}{p+\vartheta(p-2)_{+}},\quad 
		\alpha=\frac{\beta}{1+\beta}
		=\frac{1-\vartheta}{p+1+\vartheta\bigl((p-2)_+-1\bigr)}.
	\end{gathered}
\end{equation}
Write
\begin{align}\label{EXP01}
\varepsilon_{0}=
\begin{cases}
\min\{p\alpha,\varepsilon\},&\text{for }\theta_{\mathrm{c}}<p<2,\vspace{0.2ex}\\
\min\big\{\frac{p\alpha}{1+\alpha(p-2)},\varepsilon\big\},&\text{for }p>2,
\end{cases}
\end{align}
where $\theta_{\mathrm{c}}$ is given by \eqref{CRI01}, and
\begin{align*}
\quad\varepsilon=
	\begin{cases}
		\frac{p(p(l_{0}-1)-n)}{2pl_{0}+n(p-2)},&\mathrm{for}\;\theta_{\mathrm{c}}<p<2,\vspace{0.3ex}\\
		\frac{p(l_{0}-1)-n}{p(l_{0}-1)+2},&\mathrm{for}\;p>2.
	\end{cases}
\end{align*}

Throughout the paper, we use $C$ to represent a generic positive constant depending solely on the prescribed data, including $n$, $p$, $q$, $l_{0},$ $\vartheta$, $\|\phi\|_{L^{\infty}(\Omega)}$, $\|g\|_{L^{\infty}(\partial\Omega)}$, $\|g\|_{W^{1-\frac{1}{p},p}(\partial\Omega)}$, $\|\psi\|_{L^{\frac{pl_{0}}{p-1}}(\Omega_T)}$, and $L_i$, $i=1,2,3$, whose value may vary from line to line.

Our main results are now listed as follows.
\begin{theorem}\label{ZWTHM90}
Fix $p\in(\theta_{\mathrm{c}},2)\cup(2,\infty)$ and $n\geq2$.\ Let $u$ be a weak solution of \eqref{PO001} with $\Omega=B_1$ and $T=1$.\ Then one can find an exponent $\hat{\alpha}\in(0,\varepsilon_{0}]$, depending only on the prescribed data, such that for all $(x,t),(y,s)\in B_{\frac{1}{16}}\times(-\frac{1}{2},0]$,
\begin{align}\label{QNAW001ZW}
|u(x,t)-u(y,s)|\leq C\big(|x-y|+|t-s|^{\frac{1}{p}}\big)^{\hat{\alpha}}.
\end{align}

\end{theorem}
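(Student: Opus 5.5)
We will prove \eqref{QNAW001ZW} in three stages: a global $L^\infty$ bound, an interior spatial H\"older estimate, and a temporal H\"older estimate in the $L^\infty$ norm. The last stage carries the difficulty.

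\textbf{Stage one: boundedness.} We first show $\|u\|_{L^\infty(B_1\times(-1,0])}\le C$.
\begin{itemize}
\item Take the truncations $(u-k)_\pm$ with $k\ge \max\{\|\phi\|_{L^\infty},\|g\|_{L^\infty}\}$ as test functions. These are admissible in both the Dirichlet and the Neumann case.
\item This gives energy inequalities in which the forcing is controlled by $\psi\in L^{\frac{pl_0}{p-1}}$. Since $l_0>\frac{n+p}{p}$, the forcing is subcritical for parabolic De Giorgi iteration.
\item For $p<2$, the restriction $p>\theta_{\mathrm c}$ is exactly what makes the parabolic embedding $L^\infty L^2\cap L^pW^{1,p}\hookrightarrow L^{p(n+2)/n}$ usable.
\end{itemize}
A standard De Giorgi iteration over the level sets then yields the bound.

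\textbf{Stage two: local spatial H\"older continuity.} We run DiBenedetto's intrinsic scaling on cylinders $Q_\rho(\omega)=B_\rho\times(-\omega^{2-p}\rho^p,0]$.
\begin{itemize}
\item The two alternatives are the De Giorgi lemma and the expansion of positivity.
\item Combined, they give oscillation decay $\omega_{k+1}\le \eta\,\omega_k+C\rho_k^{\varepsilon}$.
\item Here $\varepsilon$ is the exponent in \eqref{EXP01}. It arises from the scaling of $\|\psi\|_{L^{pl_0/(p-1)}}$ on intrinsic cylinders, which is why it takes different forms for $p<2$ and $p>2$.
\end{itemize}
Iterating gives H\"older continuity in space, and in space-time with respect to the intrinsic metric. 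However, this only yields the form $|x-y|+|t-s|^{1/p}$ up to exponent losses, and we want a uniform statement after the waiting time $\frac12$.

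\textbf{Stage three: temporal estimate (main obstacle).} We aim for an $L^\infty$-valued temporal bound for $t,s\in(-\frac12,0]$:
\[
\sup_{B_{1/16}}|u(\cdot,t)-u(\cdot,s)|\le C|t-s|^{\alpha},
\]
with $\alpha$ as in \eqref{eq:unified-exponents}. The plan has three steps.
\begin{enumerate}
\item \emph{Integral control of the time difference.} Test the equation against $\varphi=\zeta^p\,(u(\cdot,t+h)-u(\cdot,t))$ with a cutoff $\zeta$, using Steklov averages. The flux term is handled by the growth bound $|\mathbf a|\le L_2|\xi|^{p-1}$ together with a gradient bound from the energy estimate. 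The forcing is handled by H\"older's inequality with $\psi\in L^{\frac p{p-1}}L^q$, $q>q_{\mathrm c}$. The interpolation parameter $\vartheta>q_{\mathrm c}/q$ trades $L^q$ integrability against the $L^2$/Sobolev control of the difference. This produces an integral bound of the form $\|u(t+h)-u(t)\|_{L^2}\lesssim h^{\beta}$.
\item \emph{Upgrade to $L^\infty$.} Perform a time-rescaling $v(x,\tau)=u(x,t+h\tau)$ on a short window. The difference satisfies an equation of the same structural type, so a De Giorgi $L^2\to L^\infty$ local bound applies.
\item \emph{Balance the scales.} Choose the spatial radius in the local bound as a power of $h$. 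The relation $\alpha=\beta/(1+\beta)$ comes from optimizing the integral decay against the spatial-H\"older error from Stage two.
\end{enumerate}
The delicate parts are tracking exponents in the degenerate case $p>2$, where the factor $\vartheta(p-2)_+$ enters, and justifying the test function with only energy regularity. If the direct test is troublesome, we would first try mollifying in time.

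\textbf{Combination.} Split
\[
|u(x,t)-u(y,s)|\le|u(x,t)-u(y,t)|+|u(y,t)-u(y,s)|.
\]
The first term is controlled by Stage two and the second by Stage three. Writing $|t-s|^{\alpha}=(|t-s|^{1/p})^{p\alpha}$ gives the exponent $p\alpha$ for $p<2$. For $p>2$, the intrinsic scaling converts it to $\frac{p\alpha}{1+\alpha(p-2)}$. Taking $\hat\alpha$ as the minimum of these and the spatial exponent, we get $\hat\alpha\le\varepsilon_0$, and \eqref{QNAW001ZW} follows with constants depending only on the data.
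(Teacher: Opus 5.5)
The gap is in Stage three, on which your combination step depends. In Step~1, you test on $[t,t+h]$ with the time-independent function $\zeta^p(u(\cdot,t+h)-u(\cdot,t))$. This produces a flux term containing $\nabla u(\cdot,t+h)$ and $\nabla u(\cdot,t)$ at fixed times. The energy estimate controls $\nabla u$ only in $L^p$ in time, so after integrating in $t$ you get a Nikolskii-type bound on $\int\|u(\cdot,t+h)-u(\cdot,t)\|_{L^2}^2\,dt$, not a bound for every $t$, and nothing singles out the exponent $\beta$. A uniform-in-time bound on $\|\nabla u(\cdot,t)\|_{L^p}$ is not available here: $\mathbf{a}$ has no potential, so the usual test with $\partial_t u$ gives nothing.

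Step~2 breaks down more seriously. The linear reparametrization $u(x,t+h\tau)$ creates no comparison structure. A difference of two solutions also does not solve an equation ``of the same structural type'': the flux difference $\mathbf{J}=\mathbf{a}(x,\nabla u_1)-\mathbf{a}(x,\nabla u_2)$ obeys only the one-sided bound \eqref{E32}. There is no upper bound in terms of $|\nabla(u_1-u_2)|$, since $\mathbf{a}$ is merely continuous in $\xi$, so the cutoff terms of a local De Giorgi iteration cannot be absorbed. Moreover, the initial value $u(\cdot,r+h)-u(\cdot,r)$ of a time-translated difference is exactly the quantity whose smallness you are trying to prove.

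The missing idea is to turn a time shift into an amplitude change via the homogeneity in \eqref{E01}. Take $r=s-\ell$, $h\le\ell$ and $\lambda=(1+h/\ell)^{1/(p-2)}$. Then $\hat u(x,\tau)=\lambda u(x,r+\lambda^{p-2}(\tau-r))$ has three properties:
\begin{itemize}
\item it solves the same equation with forcing $\lambda^{p-1}b$;
\item it starts at time $r$ from $\lambda u(\cdot,r)$, which lies within $C\mathcal{Q}_1 h/\ell$ of $u(\cdot,r)$ in $L^\infty$, and likewise for the boundary data;
\item it satisfies $\hat u(\cdot,s)=\lambda u(\cdot,t)$.
\end{itemize}
Because $v=\hat u-u$ has only the lower bound $\mathbf{J}_{\lambda}\cdot\nabla v\ge L_3\Phi_p(\nabla v,\mathfrak{g})$, the paper runs De Giorgi globally on $B_1$. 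It tests with $(v-k)_+$ without cutoffs and uses the Dirichlet or Neumann condition. The superlinear recursion comes from the support-gain inequality of Lemma~\ref{lem:spatial}, which is where $\vartheta>q_{\mathrm c}/q$ enters. Lemma~\ref{lem:window} then gives $\|v(\cdot,s)\|_{L^\infty}\lesssim h/\ell+\ell^\beta$. The relation $\alpha=\beta/(1+\beta)$ comes from choosing $\ell=h^{1/(1+\beta)}$, not from balancing against a spatial H\"older error.

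The paper also does not conclude by the triangle inequality. Theorem~\ref{thm10} propagates measure information forward in time (Lemmas~\ref{lem006} and~\ref{lemz01}). For $p<2$ it also controls the time-slice terms in the measure-decay step (Lemma~\ref{lem005}). The exponent $\frac{p\alpha}{1+\alpha(p-2)}$ arises because the temporal increment over the intrinsic length $\widetilde{m}R^{p}\sim\widetilde{\omega}^{2-p}R^{p}$ must stay below $\widetilde{\omega}/16$. Your ``conversion'' in the last step is therefore not derived, though it is harmless, since lowering the exponent is free.

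Finally, Stage two is at odds with the rest of your plan. If you genuinely carry out the classical intrinsic-scaling oscillation decay with this lower-order term \cite{D1993,CD1988}, it already yields \eqref{QNAW001ZW} for bounded $u$. A modulus in the intrinsic geometry converts to one in $|x-y|+|t-s|^{1/p}$ at the cost of the exponent. Stage three would then be superfluous, and your argument would amount to citing that theorem. That is precisely the energy-based route the paper replaces.
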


\begin{remark}
Let $\Subset$ represent compact containment.\ For a general space-time cylinder $\Omega_T$, choose $\Omega_{0}\Subset\Omega$.\ Set $R_{0}=\min\{1,\mathrm{dist}(\Omega_{0},\partial\Omega),T^{\frac{1}{p}}\}$.\ By adapting the proof of Theorem \ref{ZWTHM90} with minor modifications, we obtain the H\"older estimate in \eqref{QNAW001ZW} with $B_{1/16}\times(-2^{-1},0]$ replaced by $\Omega_{0}\times(-T/2,0]$.\ Here the constant $C$ may depend on $R_{0}$, whereas the exponent $\alpha$ is independent of $R_{0}$.

\end{remark}

The key to proving Theorem \ref{ZWTHM90} lies in establishing the following temporal H\"older regularity with values in $L^\infty$.
\begin{theorem}\label{thm10}
Set $n\geq2$.\ Given $p\in\big(\theta_{\mathrm{c}},2\big)\cup(2,\infty)$, let $u$ be the solution of \eqref{PO001} in $B_{1}\times(-1,0]$. For any $0<\delta\ll 1$, we derive that for every $-1+\delta\leq s<t\leq 0$,
\begin{align*}
			\|u(\cdot,t)-u(\cdot,s)\|_{L^{\infty}(B_{1})}\leq C\delta^{-\beta}|t-s|^{\alpha},
\end{align*}
where $\alpha$ and $\beta$ are defined by \eqref{eq:unified-exponents}.
\end{theorem}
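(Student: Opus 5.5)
The plan is to combine an $L^\infty$ bound with a time-rescaling argument and De Giorgi iteration applied to the time-shifted difference $w(x,t)=u(x,t+h)-u(x,t)$.

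\textbf{Step 1: $L^\infty$ bound.} First I would establish a global $L^\infty$ bound $\|u\|_{L^\infty(\Omega_T)}\le C$. This follows from a standard De Giorgi iteration on the truncations $(u-k)_\pm$, using the energy inequality, the bounded data $\phi$ and $g$, and the source bound $\psi\in L^{pl_0/(p-1)}$ with $l_0>\frac{n+p}{p}$. In the Dirichlet case the levels are taken above $\|g\|_\infty$; in the Neumann case no boundary restriction on the levels is needed.

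\textbf{Step 2: integral-in-time Hölder bound after waiting time $\delta$.} For small $h>0$, I would test the equations at times $t+h$ and $t$ against $w\eta(t)$, where $\eta$ is a time cutoff vanishing before $-1+\delta/2$. The monotonicity \eqref{E32} then yields
\[
\sup_t\int|w|^2+\iint|\nabla w|^p \ \text{(or its degenerate/singular weighted analogue)} \;\lesssim\; \delta^{-1}\iint|w|^2+\iint\psi|w|.
\]
The term $\iint|w|^2$ is in turn controlled by $h\cdot$ (the energy of $\partial_t u$ in the dual norm). To get that energy, I would test with $(u-k)\eta$ and with $\partial_t u\,\eta$ in Steklov-averaged form, using the homogeneity of $\mathbf a$. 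The $q$-integrability of $\psi$ in space, with $q>q_{\mathrm c}$, enters through a Sobolev/Gagliardo–Nirenberg interpolation: $\int\psi|w|\le\|\psi\|_{L^q}\|w\|_{L^{q'}}$, and $\|w\|_{L^{q'}}$ is interpolated between $\|w\|_{L^\infty}$ and $\|\nabla w\|_{L^p}$ (for $p<2$, between $\|w\|_{L^2}$ and the gradient). The exponent $\vartheta\in(q_{\mathrm c}/q,1)$ is exactly the interpolation weight. This produces a bound of the form $\|w\|_{L^2}^2\lesssim \delta^{-\cdot}h^{\cdot}$.

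\textbf{Step 3: upgrade to $L^\infty$ by rescaling (main obstacle).} Next I would upgrade to $L^\infty$ by a time rescaling. Set $M=\|w\|_{L^\infty(B_1\times(-1+\delta,0))}$. The difference $w$ satisfies a linear-type equation $\partial_t w-\mathrm{div}(\mathbf A(x,t)\nabla w)=\psi(\cdot,t+h)-\psi(\cdot,t)$ sense-wise, in the monotone form. I would rescale time by $\lambda=M^{p-2}$ (intrinsic scaling), or by $\lambda=1$ when $p<2$ and the $(1+|\xi|)^{2-p}$ weight is used. A local De Giorgi iteration on $(w-k)_+$ over intrinsic cylinders then gives
\[
M\le C\,\delta^{-\gamma}\big(\text{Step 2 quantity}\big)^{\vartheta'}+C\,h^{\text{something}}M^{\vartheta}.
\]
In the iteration, the source contribution is absorbed using $\|\psi\|_{L^q}$ and produces the factor $M^\vartheta$. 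Solving this self-improving inequality by Young's inequality yields $M\le C\delta^{-\beta}h^{\beta/(1+\beta)}=C\delta^{-\beta}h^\alpha$. Here the denominator $p+\vartheta(p-2)_+$ in $\beta$ arises from the intrinsic factor $M^{p-2}$ in the degenerate case.

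I expect the hardest part to be this last De Giorgi step for $w$. The main difficulty is that the difference of fluxes is only monotone, not linear, so $w$ does not solve a clean equation; I would run the iteration directly on the difference of the two weak formulations with test function $(w-k)_+\zeta^p$. A second difficulty is tracking the powers of $\delta$ and $M$ through the intrinsic rescaling so that they match \eqref{eq:unified-exponents}. If the bookkeeping closes with a small power of $M$ on the right-hand side, I would conclude by absorption and an interpolation/covering argument over $h$.
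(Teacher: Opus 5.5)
\textbf{There is a genuine gap in Step 3, and it comes from working with the plain time shift $w(x,t)=u(x,t+h)-u(x,t)$.}

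First, the localized iteration with test function $(w-k)_+\zeta^p$ does not close. Monotonicity \eqref{E32} controls the flux difference $\mathbf a(x,\nabla u(\cdot,t+h))-\mathbf a(x,\nabla u(\cdot,t))$ only from below, and no bound by $|\nabla w|$ is available. So the cutoff term $\iint (w-k)_+\zeta^{p-1}|\nabla\zeta|\,\big(|\nabla u(\cdot,t+h)|^{p-1}+|\nabla u(\cdot,t)|^{p-1}\big)$ acts like a forcing that is merely in $L^{p'}$. That is not enough for De Giorgi, which here needs $L^{p'}_tL^q_x$ with $q>q_{\mathrm c}$. You have to avoid spatial cutoffs altogether and use the boundary condition on all of $B_1$.

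Second, the source of the $w$-equation, $b(\cdot,t+h,u(\cdot,t+h))-b(\cdot,t,u(\cdot,t))$, is not small in $h$, since $b$ is only measurable in $t$. Its contribution can only be made small by shortening the time window to some length $\ell$, and nothing in your outline produces the posited factor $h^{\text{something}}M^{\vartheta}$. On a short window, however, $w$ has no initial datum that is small in $L^\infty$. You would have to start from the integrated bound $\iint|w|^2\lesssim h$ with a time cutoff, which costs negative powers of $\ell$ and returns only a fractional power of $h$.

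The exponents in the statement come precisely from balancing a \emph{linear} term $h/\ell$ against $\ell^\beta$. This gives $\ell=h^{1/(1+\beta)}$, hence $h^{\beta/(1+\beta)}$, and $\delta^{-\beta}$ from the constraint $\ell\le\delta/2$. Your route has no source for such a linear term, so the \emph{self-improving inequality} with unspecified exponents, and its resolution to $C\delta^{-\beta}h^\alpha$, are asserted rather than derived.

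A smaller point: testing with $\partial_t u\,\eta$ in Step 2 is not available, because $\mathbf a$ need not have a potential and $\partial_t u$ is not known to be admissible. The bound $\iint|w|^2\lesssim h$ should instead come from duality.

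\textbf{The missing idea} is to use the homogeneity $\mathbf a(x,\kappa\xi)=\kappa^{p-1}\mathbf a(x,\xi)$ to build a comparison solution, rather than for intrinsic scaling. Take $h=t-s\le\ell\le\delta/2$, $r=s-\ell$ and $\lambda=(1+h/\ell)^{1/(p-2)}$, and set $\hat u(x,\tau)=\lambda u(x,r+\lambda^{p-2}(\tau-r))$. Then:
\begin{itemize}
\item $\hat u$ solves the same equation on $[r,s]$, with source $\lambda^{p-1}b$;
\item $\hat u(\cdot,s)=\lambda u(\cdot,t)$;
\item $\hat u$ differs from $u$ at $\tau=r$ and on $\partial B_1$ only by $(\lambda-1)u(\cdot,r)$, resp.\ $(\lambda-1)g$, or has the same zero Neumann flux. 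By Step 1 this discrepancy is $O(h/\ell)$ in $L^\infty$.
\end{itemize}
For $v=\hat u-u$ one then runs a global-in-space De Giorgi iteration on $(v-k)_+$, for $k\ge k_0=O(h/\ell)$, with no cutoffs at all. It uses the exact energy identity, $\mathbf J_\lambda\cdot\nabla v\ge L_3\Phi_p(\nabla v,\mathfrak g_\lambda)$, and the support-gain inequality of Lemma~\ref{lem:spatial}; this is where $\vartheta>q_{\mathrm c}/q$ and the denominator $p+\vartheta(p-2)_+$ enter. The iteration yields $\|v(\cdot,s)\|_{L^\infty(B_1)}\le C(h/\ell+\ell^\beta)$, and optimizing in $\ell$ gives the theorem. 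The same construction explains why $p=2$ must be excluded, since $\lambda$ is undefined there.
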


For $p>1$, denote $p'=\frac{p}{p-1}$. To construct a counterexample showing that forcing in $L^{p'}_tL^n_x$ does not guarantee temporal H\"older continuity of any positive order, we consider the following initial--boundary value problem
\begin{equation}\label{eq:model}
	\begin{cases}
		\partial_tu-\mathrm{div}\big(|\nabla u|^{p-2}\nabla u\big)=F(x,t),
		&\text{in }B_1\times(-1,0),\\
		u=1\;\,\text{or}\;\, |\nabla u|^{p-2}\nabla u\cdot\nu=0,
		&\text{on }\partial B_1\times(-1,0],\\
		u(x,-1)=1,&\text{in }B_1.
	\end{cases}
\end{equation}
Here $\nu$ is the outward unit normal. For a nondegenerate time interval $I\subset(-1,0]$ and $0<\bar{\alpha}\leq1$, define the extended seminorm
\begin{equation*}
	[u]_{C^{0,\bar{\alpha}}(I;L^\infty(B_1))}
	=
	\sup_{\substack{s,t\in I\\s<t}}
	\frac{
		\|u(\cdot,t)-u(\cdot,s)\|_{L^\infty(B_1)}
	}{(t-s)^{\bar{\alpha}}}
	\in[0,\infty].
\end{equation*}
Its finiteness characterizes H\"older continuity in time with values in $L^\infty$.\ The case of $\bar{\alpha}=1$ corresponds to Lipschitz continuity.

\begin{prop}\label{prop:critical}
Fix $n\geq2$ and $p>1$.\ Each problem in \eqref{eq:model} admits a source $F\in L^{p'}((-1,0);L^n(B_1))$ and an energy weak solution $u\in C(\overline{B}_{1}\times[-1,0])\cap L^p((-1,0);W^{1,p}(B_1))$ with $1\leq u\leq2$ such that
	\begin{equation}\label{eq:failure-claim}
		[u]_{C^{0,\bar{\alpha}}((-1/2,-1/2+\rho);L^\infty(B_1))}=\infty,
		\quad\text{for every }0<\bar{\alpha}\leq1,
		\; 0<\rho\leq1/2.
	\end{equation}
	
\end{prop}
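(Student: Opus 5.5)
The plan is an explicit construction: $u$ equals $1$ plus a sum of small, spatially concentrated, time-localized bumps with disjoint supports. Each bump rises and falls very fast in time. Because it is spatially tiny, its $L^n$-norm is small, so the source stays in $L^{p'}_tL^n_x$; but its $L^\infty$-height decays only polynomially while its rise time decays exponentially, which destroys every H\"older modulus.

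\textbf{Construction.} Fix a radial profile $\varphi(x)=\Psi(|x|)$ with $0\le\varphi\le1$, $\varphi(0)=1$, $\operatorname{supp}\varphi\subset B_1$. Choose $\Psi$ so that the vector field $|\nabla\varphi|^{p-2}\nabla\varphi$ is $C^1$; for instance prescribe $|\Psi'|^{p-2}\Psi'=-V$ with $V$ smooth, compactly supported in $(0,1)$, $V\ge0$, and recover $\Psi$ by integration. Then $G:=\mathrm{div}(|\nabla\varphi|^{p-2}\nabla\varphi)$ is bounded. Pick distinct points $x_k\in B_{1/2}$ and radii $r_k$ with the balls $B_{r_k}(x_k)\subset B_{1/2}$ pairwise disjoint. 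Take $t_k=-\tfrac12+2^{-k}$, $\tau_k=e^{-k}$, $h_k=k^{-4}$, and $r_k=\tau_k^{1/p}k^{-2}$; this is summably small, so disjointness is easy to arrange. Let $a_k(t)$ be the tent function that vanishes outside $[t_k,t_k+2\tau_k]$ and has peak $h_k$ at $t_k+\tau_k$, so $|a_k'|=h_k/\tau_k$ on its support. For $k\ge k_0$ these intervals are pairwise disjoint. Define
\[
u(x,t)=1+\sum_{k\ge k_0}a_k(t)\,\varphi\Big(\frac{x-x_k}{r_k}\Big),\qquad F:=\partial_tu-\mathrm{div}\big(|\nabla u|^{p-2}\nabla u\big).
\]
Then $1\le u\le 2$. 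Continuity holds because each term is continuous, the supports are disjoint, and the heights $h_k\to0$, so the series converges uniformly. Since $u\equiv1$ near $\partial B_1$ and at $t=-1$, both the Dirichlet and the Neumann condition hold, and so does the initial condition.

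\textbf{Source estimate.} On the support of the $k$-th bump, by homogeneity,
\[
F=a_k'(t)\,\varphi\Big(\frac{x-x_k}{r_k}\Big)-a_k(t)^{p-1}r_k^{-p}\,G\Big(\frac{x-x_k}{r_k}\Big).
\]
Its $L^n_x$-norm is at most $(h_k/\tau_k)\,r_k\|\varphi\|_{L^n}+h_k^{p-1}r_k^{1-p}\|G\|_{L^n}$, which follows from the $r_k^{n}$ volume factor giving $r_k$ after the $n$-th root. Integrating the $p'$-th power over an interval of length $2\tau_k$ gives a contribution to $\|F\|_{L^{p'}L^n}$ of order
\[
h_kr_k\tau_k^{-1/p}+h_k^{p-1}r_k^{1-p}\tau_k^{1/p'}=k^{-6}+k^{-4(p-1)}k^{2(p-1)}=k^{-6}+k^{-2(p-1)}.
\]
Disjointness in time means $\|F\|^{p'}_{L^{p'}L^n}$ is the sum of the $p'$-th powers of these contributions. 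For $p\ge2$ this sum is already finite. For $1<p<2$ the exponents must be retuned, taking $h_k=k^{-m}$ with $m$ large and $r_k=\tau_k^{1/p}k^{-j}$ with $j$ chosen so that $h_k^{p-1}r_k^{1-p}\tau_k^{1/p'}=k^{-(m-j)(p-1)}$ is $p'$-summable. This is the one place where the bookkeeping needs care; it is the step I expect to be the main obstacle, including checking that the bounded $G$ really makes the $p$-Laplacian term a classical $L^n$ function for all $p>1$. Similarly $\nabla u\in L^p$, since the bound $h_k^p r_k^{n-p}\tau_k$ is summable. The weak formulation then holds by direct integration by parts on each smooth bump, with the sum justified by these $L^{p'}L^n$ and $L^pW^{1,p}$ bounds.

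\textbf{Failure of H\"older continuity.} Evaluating at $x_k$ gives $\|u(\cdot,t_k+\tau_k)-u(\cdot,t_k)\|_{L^\infty}\ge h_k=k^{-4}$. Hence
\[
\frac{\|u(\cdot,t_k+\tau_k)-u(\cdot,t_k)\|_{L^\infty}}{\tau_k^{\bar\alpha}}\ge k^{-4}e^{\bar\alpha k}\to\infty
\]
for every $\bar\alpha\in(0,1]$. For any $\rho\in(0,1/2]$, all sufficiently large $k$ have $[t_k,t_k+\tau_k]\subset(-1/2,-1/2+\rho)$, which gives \eqref{eq:failure-claim}.
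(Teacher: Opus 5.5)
Your construction is correct, and it takes a genuinely different route from the paper.

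\textbf{The paper's route.} It uses a single self-similar bump $u=1+a(\tau)\Phi(x/r(\tau))$, $\tau=t+\frac{1}{2}$, with logarithmic amplitude $a(\tau)=1/\log(e/\tau)$ and intrinsic radius $r(\tau)^p=\tau a(\tau)^{p-2}$. With this choice, $\partial_t u$ and the $p$-Laplacian term both have size $a/\tau$. Scaling of the $L^n$ norm then gives $\|F(\cdot,t_0+\tau)\|_{L^n}^{p'}\lesssim 1/(\tau\log^2(e/\tau))$, which is integrable. The failure of every H\"older exponent comes from the logarithmic modulus $u(0,t_0+2h)-u(0,t_0+h)\approx\log 2/\log^2(1/h)$.

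\textbf{Your route.} You use countably many tent-in-time bumps with disjoint time supports accumulating at $t=-\frac{1}{2}$. They have polynomially small heights $h_k$ and exponentially short durations $\tau_k$, so the quotient $h_k/\tau_k^{\bar\alpha}$ blows up for every $\bar\alpha>0$. Your profile, defined through $|\Psi'|^{p-2}\Psi'=-V$ with $V$ smooth and supported in $(0,1)$, gives a smooth compactly supported flux and a bounded $G$ for every $p>1$. The weak formulation follows by summing the bumpwise identities. This is legitimate: each bump is Lipschitz in time, vanishes at the ends of its interval, and has flux vanishing near $\partial B_1$. Disjointness of supports also gives $\sum_k\iint|F_k\zeta|=\iint|F\zeta|<\infty$.

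\textbf{What each buys.} Your approach is more elementary and flexible in its parameters. It has no singular point at $t_0$ that needs a limiting argument in the weak formulation. Its source jumps in time at the kinks of the tents, which is harmless here and removable with a smooth time profile. The paper's approach gives one explicit formula and a source that is continuous for $t\neq t_0$. Its solution fails H\"older continuity only logarithmically. Its Step~6 also shows the source lies in no $L^{p'}_tL^q_x$ with $q>n$, so it sits exactly at the threshold.

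\textbf{Two minor corrections.} First, the obstacle you flag for $1<p<2$ is not real. The second contribution is $k^{-2(p-1)}$, and its $p'$-th power is $k^{-2(p-1)p'}=k^{-2p}$, which is summable for every $p>1$. So your original choice $h_k=k^{-4}$, $r_k=\tau_k^{1/p}k^{-2}$ works for all $p>1$ without retuning. Second, the intervals $[t_k,t_k+2\tau_k]$ are pairwise disjoint once $k\geq 3$, because $2e^{-k}<2^{-k}$ there. Hence the spatial disjointness of the balls $B_{r_k}(x_k)$ is unnecessary, and you may take all $x_k=0$.
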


\begin{remark}
	For $p>2$, Proposition \ref{prop:critical} establishes the sharpness of the condition $q>n$ for the temporal H\"older regularity by exhibiting failure when $q\leq n$.\ The counterexample extends to $1<p<2$, but in the range $2n/(n+2)<p<2$, the sufficient threshold $q_{\mathrm c}=2np/(p(n+2)-2n)$ is strictly larger than $n$.\ Thus, the intermediate range $n<q\leq q_{\mathrm c}$ remains unresolved.
	
\end{remark}

We next construct a counterexample to show that continuous bounded time-dependent forcing terms can cause the failure of Lipschitz continuity in time with values in $L^{\infty}$. For this purpose, we consider
\begin{equation}\label{eq:counterexample-model}
	\partial_tu-\mathrm{div}\left(|\nabla u|^{p-2}\nabla u\right)=F(x,t),\quad\text{in }B_1\times(-1,0].
\end{equation}
For the Dirichlet problem, prescribe
\begin{equation}\label{eq:counterexample-D-data}
	\begin{cases}
		u(x,t)=2+x_1,
		&\text{on }\partial B_1\times(-1,0],\\
		u(x,-1)=2+x_1,
		&\text{in }B_1.
	\end{cases}
\end{equation}
For the homogeneous Neumann problem, prescribe
\begin{equation}\label{eq:counterexample-N-data}
	\begin{cases}
		|\nabla u|^{p-2}\nabla u\cdot\nu=0,
		&\text{on }\partial B_1\times(-1,0],\\
		u(x,-1)=2+\big(1-|x|^{p'}\big)^{p'},
		&\text{in }B_1.
	\end{cases}
\end{equation}
Both initial data satisfy the respective initial--boundary compatibility conditions.
\begin{prop}\label{prop:main}
Set $n\geq2$ and $p>1$.\ For each of the two sets of initial and boundary data \eqref{eq:counterexample-D-data} and \eqref{eq:counterexample-N-data}, there exist a time-dependent source $F\in C(\overline{B}_{1}\times[-1,0])$ and a positive bounded energy weak solution $u$ of \eqref{eq:counterexample-model} with bounded spatial gradient such that the solution belongs to $C^{\bar{\alpha}}([-1,0];L^\infty(B_1))$ for any $0<\bar{\alpha}<1$, but its $L^\infty$-valued temporal Lipschitz seminorm is infinite on every right neighborhood of $-\frac{1}{2}$. 
\end{prop}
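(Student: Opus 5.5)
The construction goes by prescribing $u$ explicitly and defining $F$ through the equation. I take
\[
u(x,t)=w(x)+h(t),
\]
where $w$ is a fixed smooth profile carrying the boundary data and $h$ is a scalar function of time.

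\textbf{Dirichlet case.} Let $w=2+x_1$. Then $|\nabla w|=1$ and $\mathrm{div}(|\nabla w|^{p-2}\nabla w)=0$. Since the boundary values $2+x_1$ are independent of $t$, I cannot add a spatially constant $h(t)$ directly. Instead I multiply by a cutoff:
\[
u=2+x_1+\eta(x)h(t),
\]
with $\eta\in C_c^\infty(B_1)$, $0\le\eta\le1$, $\eta\equiv1$ on $B_{1/2}$. On the support of $\nabla\eta$ the gradient is $e_1+h\nabla\eta$. Choosing $\|h\|_\infty$ small relative to $\|\nabla\eta\|_\infty$ keeps $|\nabla u|\ge 1/2$, so the flux $|\nabla u|^{p-2}\nabla u$ is smooth in $(x,h)$ there.

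For $h$ I pick a continuous function on $[-1,0]$ with $h\equiv0$ on $[-1,-1/2]$, $h(t)=\sigma(t+1/2)$ for $t>-1/2$, where
\[
\sigma(\tau)=c\,\tau\log(1/\tau)
\]
on a small range, then smoothly continued. This $h$ lies in $C^{\bar\alpha}$ for every $\bar\alpha<1$, is not Lipschitz at $-1/2^+$, and is $C^1$ on $(-1/2,0]$ with $h'(t)\sim c\log(1/(t+1/2))$. Define
\[
F=\partial_tu-\mathrm{div}(|\nabla u|^{p-2}\nabla u)=\eta h'(t)-\mathrm{div}\bigl(|e_1+h\nabla\eta|^{p-2}(e_1+h\nabla\eta)\bigr).
\]
The second term is continuous. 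The first is unbounded near $t=-1/2$, so it fails the requirement $F\in C$.

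To fix this, I need a profile where $\partial_t u$ stays bounded but the $L^\infty$ time increments are not $O(\tau)$. The natural mechanism is a moving spatial structure: $\|u(t)-u(s)\|_\infty$ can exceed $\|\partial_tu\|_{L^1_t}$ only if $\partial_tu$ is unbounded somewhere, which forces singular behavior inside the diffusion term. So I take
\[
u=2+x_1+\eta(x)\,\tau\,\Phi\bigl(x/\lambda(\tau)\bigr),\qquad \tau=t+1/2>0,
\]
with $\lambda(\tau)\to0$. The idea is that $\partial_t u$ is of size $|\log\tau|$ at points where the diffusion $\mathrm{div}$-term cancels it. Equivalently, $F$ stays bounded while $\partial_tu$ and $\Delta_p u$ blow up together, as happens for a solution with a time-dependent cusp.

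Concretely, I would let $h(t,x)=\tau\log(1/\tau)$ near $x=0$ but truncate it spatially at scale $r(\tau)$ chosen so that the $p$-Laplacian of the bump balances $\log(1/\tau)$. The condition for this balance is $\tau\log(1/\tau)\,r^{-p}\sim\log(1/\tau)$ (linearized around gradient $e_1$, which gives a Laplacian-type operator with $r^{-2}$ instead; I would use that since $|\nabla u|\approx1$). This yields $r\sim\tau^{1/2}$. I then need to verify continuity of $F$ at $\tau=0$ and boundedness of $\nabla u$; this verification is the main obstacle, and I expect to tune the logarithm, possibly to $\log\log$, so that the residual terms $\to0$.

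The bounded gradient comes from $\tau\log(1/\tau)/r\to0$. The $C^{\bar\alpha}$ property holds because $\sup_x|u(t)-u(s)|\lesssim|t-s|\log(1/|t-s|)$, while $\|u(\tau)-u(0)\|_\infty\ge\tau\log(1/\tau)$ gives infinite Lipschitz seminorm. The same holds after shifting $-1/2$ to any point of a right neighborhood.

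\textbf{Neumann case.} I repeat the construction with $w=2+(1-|x|^{p'})^{p'}$. One checks that $|\nabla w|^{p-2}\nabla w=-p'^{\,p-1}\,x\,(1-|x|^{p'})^{(p'-1)(p-1)}$, which is smooth, vanishes on $\partial B_1$, and has continuous divergence. This gives the Neumann condition and the initial compatibility. I place the perturbation away from the zero of $\nabla w$, near a point where $|\nabla w|>0$, using a cutoff supported inside $B_1$, so that the same local analysis applies.

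Finally, I check that $u$ is an energy weak solution. This is immediate since $u$ is $C^1$ in $x$, Lipschitz-in-time away from $\tau=0$ with $\partial_tu\in L^1$, and satisfies the equation pointwise with continuous $F$.
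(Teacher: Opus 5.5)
There is a genuine gap. You correctly isolate the mechanism: $\partial_t u$ must blow up at the distinguished point while cancelling against the diffusion term. But the step you defer as ``the main obstacle'' is the whole content of the proof, and the ansatz you propose for it cannot succeed. For a bump $w=A(\tau)\Phi(x/r(\tau))$ with $\Phi$ compactly supported and $\Phi(0)=1$, freeze the linearized operator at $\mathcal{A}=\mathcal{B}(e_1)=I+(p-2)e_1\otimes e_1$. With $y=x/r(\tau)$ this gives
\begin{equation*}
(\partial_t-\mathcal{A}:\nabla^2)w=A'\Phi(y)-A\frac{r'}{r}\,y\cdot\nabla\Phi(y)-\frac{A}{r^2}\,\mathcal{A}:\nabla^2\Phi(y).
\end{equation*}
For your choice $A=\tau\log(1/\tau)$, $r=\tau^{1/2}$, this equals $\log(1/\tau)\bigl[\Phi-\tfrac12 y\cdot\nabla\Phi-\mathcal{A}:\nabla^2\Phi\bigr](y)-\Phi(y)$. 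Matching orders of magnitude is not enough. Boundedness of $F$ forces the bracket to vanish identically, i.e.\ $\tau\Phi(x/\sqrt\tau)$ must be caloric for the frozen operator. No nonzero compactly supported $\Phi$ does this: the equation is elliptic with analytic coefficients, so a solution vanishing on an open set is zero. The natural regular solution is the growing quadratic $1+|y|^2/(2\operatorname{tr}\mathcal{A})$, i.e.\ the caloric polynomial $\tau+|x|^2/(2\operatorname{tr}\mathcal{A})$.

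Retuning the logarithm to $\log\log$, or changing $r(\tau)$, does not help. Failure of the Lipschitz bound forces $A'$ or $Ar'/r$ to be unbounded. Divide by the largest of the three coefficients and pass to a limit along a sequence $\tau_k\to0$. This yields a nonzero compactly supported solution of a homogeneous elliptic equation, or of a first-order Euler (homogeneity) equation, which is impossible. The coefficient-freezing and nonlinear corrections are of lower order once $r\to0$ and $A/r\to0$. So no bump truncated at a shrinking scale gives a bounded $F$, let alone a continuous one.

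The missing idea is to avoid any shrinking truncation. The paper takes the exact solution $P=(t-t_0)+|x-x_0|^2/(2\operatorname{tr}\mathcal{A})$ of the frozen equation, $(\partial_t-\mathcal{A}:\nabla^2)P=0$, at a point where $\nabla V\neq0$. It multiplies $P$ by a slowly varying function of the parabolic distance, $W=P\,H(S)$ with $S=|x-x_0|^4+(t-t_0)^2$ and $H(s)=\sqrt{\log(e^4/s)}$. The only cutoff is at a fixed scale: $u=V+\epsilon\chi(x)\eta(t)W$.

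\begin{itemize}
\item The unbounded factor $H$ then multiplies only $(\partial_t-\mathcal{A}:\nabla^2)P=0$.
\item Every term in which a derivative falls on $H$ is $O(1/H(S))\to0$.
\item The freezing and nonlinear errors are $O(\epsilon\rho H+\epsilon^2\rho H^2)$ with $\rho=S^{1/4}$, so $F$ is continuous.
\item At $x=x_0$ one has $u-V=\epsilon\tau\sqrt{\log(e^4/\tau^2)}$, which breaks the Lipschitz bound.
\item The bound $\|\partial_tu(\cdot,t)\|_{L^\infty}\le C(1+H((t-t_0)^2))$ lies in every $L^r$, $r<\infty$, which gives all exponents $\bar\alpha<1$.
\end{itemize}

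The full logarithm you aim for is too strong even in this scheme. With $H(s)=\log(e^4/s)$ the linear residual equals $-2$ on $\{x=x_0\}$. On $\{t=t_0\}$ it equals $2+4\hat y\cdot\mathcal{A}\hat y/\operatorname{tr}\mathcal{A}$, where $\hat y=(x-x_0)/|x-x_0|$. So $F$ would be bounded but discontinuous at $(x_0,t_0)$; one needs $sH'(s)\to0$. A minor point: the Neumann flux is $-(p')^{2(p-1)}(1-|x|^{p'})x$, not with the constant $(p')^{p-1}$.
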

\begin{remark}
This counterexample actually demonstrates the sharpness of the time-independence assumption on the forcing in Theorem 4.1 of \cite{HLZ202601} and Theorem 3.1 of \cite{HLZ202602}.\ While the proofs therein require time-independent forcing to implement a time-rescaling technique combined with $L^\infty$ comparison estimates, our counterexample further shows that this assumption cannot, in general, be relaxed.
\end{remark}

%


The rest of the paper is arranged as follows. Section \ref{SEC002} contains some preliminary results. Section \ref{SEC003} focuses on establishing the
temporal H\"older continuity with values in $L^\infty$.\ The counterexamples in Propositions \ref{prop:critical} and \ref{prop:main} are constructed in Section \ref{SEC005}.\ In Sections \ref{SEC006} and \ref{SEC007}, we establish the three key ingredients needed to implement the parabolic De Giorgi truncation method and thus complete the proof of Theorem \ref{ZWTHM90} for the supercritical singular and degenerate cases, respectively.\ We now close the introduction by reviewing the relevant literature and explaining how the present results relate to and differ from previous work.

Both theoretical developments and practical applications have contributed to the extensive study of the parabolic $p$-Laplace equation, which serves as a prototype for the class of quasilinear parabolic equations considered above.\ Models related to porous-media transport, image processing and game theory provide important motivations, see e.g. \cite{DLK2013,ACM2004,BV2004,PS2008}. The qualitative behavior of the parabolic $p$-Laplace equation varies significantly with the exponent $p$. The classical heat equation arises when $p=2$, whereas the nonlinear cases on either side of this threshold exhibit markedly different diffusion mechanisms.\ In the case when $1<p<2$, diffusion is fast enough to permit finite-time extinction under appropriate boundary conditions. For $p>2$, diffusion becomes slow and relaxation toward equilibrium typically follows an algebraic decay law, in contrast to the exponential decay associated with the linear case.

Unlike the classical heat equation, whose solutions are smooth in the interior, the nonlinear parabolic $p$-Laplace equation lacks uniform ellipticity. Indeed, the diffusion coefficient $|\nabla u|^{p-2}$ becomes unbounded as $|\nabla u|\to0$ when $1<p<2$, whereas it approaches zero in the same limit when $p>2$.\ These two mechanisms, known as singularity and degeneracy, respectively, create substantial difficulties in establishing higher regularity.\ Spatial $C^{1,\tilde{\alpha}}$ regularity with $0<\tilde{\alpha}<1$ is a classical regularity conclusion for such nonlinear equations.\ Understanding these phenomena has driven extensive developments in the regularity theory of quasilinear elliptic and parabolic equations, see e.g. \cite{BDGLS2026,CM2011,CM2014,L2019,D1986,CD1988,DZ2022,DZ2024,D1983,D1993,DM2010,DM201002,DM2011,U2008,E1982,DK1992,L1994,DGV2012,DGV2008,DGV200802,AL1983,DF198501,DF198502,DF1984,C1991,BSS2022,KL2000,L2006}.\ This line of research also extends to broader classes of doubly nonlinear parabolic equations, see e.g. \cite{I1989,I199401,I199402,I1995,BDMS2018,BDL2021,BDGLS202301,KSU2012,PV1993,VV2022,BFV2018,V2007,DK2007,DKV1991,JX2019,JRX2023,K1988,BDGLS202302}.

The extension of the De Giorgi truncation method created in \cite{D1957} to nonlinear parabolic equations of $p$-Laplace type encounters a fundamental difficulty arising from the different homogeneities of the temporal and spatial energy terms.\ More precisely, the time derivative produces a quadratic energy contribution, whereas the diffusion operator gives rise to a gradient term of order $p$. These two structures are naturally compatible when $p=2$, allowing the elliptic De Giorgi iteration in \cite{D1957} to be adapted to the heat equation through the Sobolev embedding theorem. The De Giorgi truncation method is built upon a priori integral estimates controlling level sets near the local extrema of weak solutions. Once obtained, these estimates reduce the regularity
problem to a purely analytical one, irrespective of the differential equation or variational problem from which they originate. The parabolic argument proceeds through three interconnected steps: forward propagation of spatial level-set information, decay of space-time level-set measures, and measure-to-point oscillation improvement.\ For $p\neq2$, the mismatch of homogeneities between the temporal energy and spatial dissipation terms leads to distinct difficulties in the two diffusion regimes: the main obstacle lies in establishing the measure-decay estimate for $1<p<2$, while the propagation estimate poses the principal challenge for $p>2$.

A fundamental advance in the regularity theory for  degenerate quasilinear parabolic equations was made by DiBenedetto \cite{D1986,D1993}, who developed the intrinsic-scaling technique to restore the appropriate balance between the competing terms in the energy estimates. The essential idea is to adapt the geometry of space-time cylinders to the local oscillation of the solution rather than relying on fixed scales. Subsequently, DiBenedetto, Gianazza, and Vespri
\cite{DGV2008,DGV2012} refined this approach by means of an exponential change of variables, which enables the propagation of measure information
over sufficiently long intrinsic time intervals, thereby overcoming the difficulty associated with temporal expansion.\ In particular, this method led to the establishment of H\"{o}lder regularity for weak solutions of general quasilinear degenerate parabolic equations with lower-order terms.

A new route to the regularity theory of weak solutions to the classical parabolic $p$-Laplace equation has recently been developed by Hao, Li and Zhao \cite{HLZ202601,HLZ202602}, who introduced a time-rescaling argument coupled with $L^{\infty}$ comparison estimates to derive Lipschitz continuity in time away from the initial time.\ With this substantial gain in temporal regularity, the space-time level-set measure-decay problem can be reduced to an elliptic-type spatial distribution decay estimate on almost every time slice, while the forward-in-time propagation of level-set measures can be established directly without relying on the energy inequality underlying the earlier approach.\ As a result, they refined the method of intrinsic scaling and established H\"{o}lder continuity of weak solutions
for $1<p<2$, together with an alternative proof for $p>2$.

In this paper, we consider the quasilinear parabolic problem in \eqref{PO001} involving a forcing term satisfying finite integrability conditions.\ The crucial step in adapting the strategy proposed by Hao, Li and Zhao \cite{HLZ202601,HLZ202602} to the current structural assumptions remains to enhance temporal regularity. In order to overcome the difficulty arising from the lower-order source term, we further develop a time-rescaling technique coupled with De Giorgi iteration. Specifically, for the purpose of accommodating the lower-order term with only finite mixed-norm integrability, we retain the time-rescaling step of the earlier temporal Lipschitz argument but replace its $L^\infty$ comparison estimates by De Giorgi iteration. The rescaled forcing discrepancy need not be bounded, so the previous supremum-norm comparison argument no longer closes. Instead, quantitative monotonicity yields truncated energy estimates for the rescaled difference, while a support-sensitive Sobolev inequality converts the mixed-norm control of the forcing into a superlinear level-set estimate.\ Iteration followed by optimization of the comparison time scale then gives temporal H\"older continuity with values in $L^\infty$ after any given positive elapsed time. In the singular case, this approach requires the supercritical range, since the underlying energy--Sobolev step loses the strict support-measure gain at and below the critical threshold.\ This restriction reflects a limitation of the present method.

\section{Preliminary}\label{SEC002}
We first fix the notation to be used throughout the paper. For any $x_{0}\in\mathbb{R}^{n}$ and $\rho>0$, write $B_{\rho}(x_{0})$ for the ball of radius $\rho$ centered at $x_{0}$. Given $t_{0}\leq0$ and $\tau>0$, we define the backward parabolic cylinder by
\begin{align*}
[(x_{0},t_{0})+Q(\rho,\tau)]=B_{\rho}(x_{0})\times(t_{0}-\tau,t_{0}].
\end{align*}
To simplify the notation, we write $B_{\rho}=B_{\rho}(0)$ and $Q(\rho,\tau)=[(0,0)+Q(\rho,\tau)]$. 

We recall two fundamental tools for the De Giorgi truncation method:\ the isoperimetric inequality and the Sobolev embedding theorem.
\begin{lemma}[see Lemma 2.2 in Chapter 2 of \cite{DGV2012}]
Let $n\geq2$ and $1<p<\infty$. Then we derive that for every $B_{R}(x_{0})\subset \mathbb{R}^{n}$, $l>k$ and $u\in W^{1,1}(B_{R}(x_{0}))$,
\begin{align}\label{pro001}
&(l-k)|\{u\geq l\}\cap B_{R}(x_{0})|\leq \frac{C(n,p)R^{n+1}}{|\{u\leq k\}\cap B_{R}(x_{0})|}\int_{\{k<u<l\}\cap B_{R}(x_{0})}|\nabla u|dx.
\end{align}
\end{lemma}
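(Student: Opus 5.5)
The statement immediately above is the De Giorgi isoperimetric inequality \eqref{pro001}. It is a purely elliptic, slice-wise fact, so I would prove it by the classical Poincar\'e-type argument on a ball. By translation and scaling $x\mapsto x_0+Rx$ it suffices to treat $B_1$. The factor $R^{n+1}$ then appears as follows: $R^{n}$ comes from the measures, and the remaining power of $R$ from the gradient. Indeed both sides scale like $R^{2n}$ after one multiplies by $|\{u\le k\}\cap B_R|$. By density of $C^1(\overline{B}_1)$ in $W^{1,1}(B_1)$, together with a standard approximation of the level sets (for instance, perturbing $k,l$ slightly and passing to limits using a.e. convergence), I reduce to $u$ smooth up to the boundary.

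The core step is a pointwise representation. Fix $x\in\{u\ge l\}$ and $y\in\{u\le k\}$, and set $w=\min\{\max\{u,k\},l\}-k$. Then $w(x)=l-k$, $w(y)=0$, and $\nabla w=\nabla u\,\mathbbm{1}_{\{k<u<l\}}$ a.e. Integrating along the segment from $y$ to $x$, which stays in $B_1$ by convexity, gives
\[
l-k\le\int_0^{|x-y|}|\nabla w(y+r\omega)|\,dr,\qquad \omega=\frac{x-y}{|x-y|}.
\]
Next I integrate in $y$ over $\{u\le k\}\cap B_1$ and in $x$ over $\{u\ge l\}\cap B_1$. The left side becomes $(l-k)|\{u\ge l\}\cap B_1|\,|\{u\le k\}\cap B_1|$. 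On the right, for fixed $y$ I write the $x$-integral in polar coordinates centred at $y$. The resulting integrand $\int_0^\rho |\nabla w(y+r\omega)|\,dr\,\rho^{n-1}$ is bounded by $2^{n-1}\cdot 2\int_0^{2}|\nabla w(y+r\omega)|\,dr$. Converting back to Cartesian coordinates gives $C\int_{B_1}|\nabla w(z)|\,|z-y|^{1-n}\,dz$. The final integration in $y$ uses the uniform bound $\sup_z\int_{B_1}|z-y|^{1-n}\,dy\le C(n)$.

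This yields
\[
(l-k)|\{u\ge l\}\cap B_1|\,|\{u\le k\}\cap B_1|\le C(n)\int_{\{k<u<l\}\cap B_1}|\nabla u|\,dx.
\]
Dividing by $|\{u\le k\}\cap B_1|$ gives the claim; the statement is vacuous if that measure is zero. Rescaling back produces $R^{n+1}$. The constant in fact depends only on $n$, so the dependence on $p$ in \eqref{pro001} is harmless.

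The only delicate step is the Fubini/polar-coordinate bookkeeping: the radial integral must be extended to the full diameter $2$, so that the constant is uniform in $x$. The approximation from $W^{1,1}$ to smooth functions is routine because the truncation $w$ is Lipschitz in $u$.
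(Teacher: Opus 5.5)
Your proof is correct and is essentially the classical argument behind the cited result: the paper gives no proof of its own and only quotes \cite{DGV2012}, where the inequality likewise comes from integrating the truncation along segments of a convex set and bounding the Riesz potential $\int_{B_1}|\nabla w(z)||z-y|^{1-n}\,dz$. One bookkeeping point: when you extend the radial integral to length $2$, you must set $\nabla w=0$ outside $B_1$, and the uniformity this buys is in the pole $y$ and the direction $\omega$, not in $x$.
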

A direct application of Proposition 3.1 in Chapter I of \cite{D1993} and Proposition 4.1 in Chapter 2 of \cite{DGV2012} implies the following Sobolev inequalities.
\begin{lemma}\label{lem001}
Set $n\geq2$. Let $B_{R}(x_{0})\subset \mathbb{R}^{n}$ and $-1\leq t_{1}<t_{2}\leq0$. Then we have

$(i)$ if $1<p<2$, for any  $u\in L^{2}(B_{R}(x_{0}))\cap W_{0}^{1,p}(B_{R}(x_{0}))$,
\begin{align*}
\int_{B_{R}(x_{0})}|u|^{p\tilde{\chi}}dx\leq C\int_{B_{R}(x_{0})}|\nabla u|^{p}dx\bigg(\int_{B_{R}(x_{0})}|u|^{2}dx\bigg)^{\chi-1};
\end{align*}

$(ii)$ if $p>2$, for any $u\in C((t_{1},t_{2});L^{p}(B_{R}(x_{0})))\cap L^{p}((t_{1},t_{2});W_{0}^{1,p}(B_{R}(x_{0})))$,
\begin{align*}
	\|u\|_{L^{p\chi}(B_{R}(x_{0})\times(t_{1},t_{2}))}\leq C\bigg(\sup\limits_{t\in(t_{1},t_{2})}\int_{B_{R}(x_{0})}|u|^{p}dx+\int^{t_{2}}_{t_{1}}\int_{B_{R}(x_{0})}|\nabla u|^{p}dxdt\bigg)^{\frac{1}{p}},
\end{align*}
where $C=C(n,p)$, $\chi$ and $\tilde{\chi}$ are given by
\begin{align}\label{chi}
\chi=\frac{n+p}{n},\quad \tilde{\chi}=\frac{n+2}{n}.
\end{align}

\end{lemma}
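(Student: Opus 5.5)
The statement just above is Lemma \ref{lem001}, which the paper attributes to Proposition 3.1 in Chapter I of \cite{D1993} and Proposition 4.1 in Chapter 2 of \cite{DGV2012}. I would prove both parts from the Gagliardo--Nirenberg inequality for functions in $W^{1,p}_0(B_R(x_0))$, extended by zero to $\mathbb{R}^n$ so that the constant depends only on $(n,p)$ and not on $R$. Both inequalities are scale-invariant, and I would check that the exponents balance so that no power of $R$ appears.

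For part $(i)$, $1<p<2$, I would interpolate $L^{p\tilde\chi}$ between $L^{2}$ and $L^{p^*}$, where $p^*=np/(n-p)$. If $p\ge n$ this is impossible, but $n\ge2>p$, so $p<n$ always holds. Let $r=p\tilde\chi=p(n+2)/n$. I would write $\frac1r=\frac{\lambda}{p^*}+\frac{1-\lambda}{2}$ and solve for $\lambda$. The natural choice forced by the stated homogeneity is $\lambda r=p$, which makes the gradient appear to the power $p$ after Sobolev. I would verify the identity
\[
\frac{p}{p^*}+\frac{r-p}{2}=1 .
\]
Here $r-p=2p/n$, so the left side is $\frac{n-p}{n}+\frac pn=1$, and the identity holds. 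Hölder's inequality then gives
\[
\|u\|_r^r\le\|u\|_{p^*}^{p}\,\|u\|_2^{2p/n},
\]
and Sobolev gives $\|u\|_{p^*}\le C\|\nabla u\|_p$. Since $\chi-1=p/n$, the factor $\|u\|_2^{2p/n}$ equals $(\int|u|^2)^{\chi-1}$, which matches the statement. Because $p>1$ we have $r\in(2,p^*)$, so the interpolation is legitimate. I would also note that the condition $u\in L^2$ is only needed to make the right-hand side finite.

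For part $(ii)$, $p>2$, I would work slicewise and then integrate in time. On each time slice, interpolate $L^{p\chi}$ between $L^p$ and $L^{p^*}$ in the case $p<n$. The same computation as above, with $2$ replaced by $p$, gives
\[
\int|u|^{p\chi}dx\le C\int|\nabla u|^pdx\Big(\int|u|^pdx\Big)^{p/n}.
\]
Integrating in $t$ and bounding the last factor by its supremum over $t$ yields
\[
\|u\|_{p\chi}^{p\chi}\le C\Big(\iint|\nabla u|^p\Big)\Big(\sup_t\int|u|^p\Big)^{p/n}\le C\,(\cdot)^{1+p/n}.
\]
Taking the $(p\chi)$-th root then gives the stated power $\frac1p$.

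The main obstacle is the case $p\ge n$ in part $(ii)$, where $p^*$ is not available. There I would apply the Gagliardo--Nirenberg inequality $\|u\|_{s}\le C\|\nabla u\|_p^{\lambda}\|u\|_p^{1-\lambda}$ directly, with $s=p\chi$. This inequality is valid for compactly supported $W^{1,p}$ functions for any finite $s$ whenever $p\ge n$. The exponent $\lambda$ is fixed by scaling, and it equals $p/(p\chi)$, which gives exactly the same homogeneity as before. With this, the slicewise argument goes through unchanged, and the constant depends only on $(n,p)$.
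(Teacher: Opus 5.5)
Your argument is correct and is essentially the standard proof behind the results the paper cites without proof: Proposition 3.1 in Chapter I of \cite{D1993} is the slicewise Gagliardo--Nirenberg bound $\int|u|^{p(n+m)/n}dx\le C\int|\nabla u|^{p}dx\,\bigl(\int|u|^{m}dx\bigr)^{p/n}$, taken with $m=2$ for $(i)$ and with $m=p$ integrated in time for $(ii)$; its interpolation exponent $n/(n+m)<1$ also covers $p\ge n$, just as in your treatment. One small correction: $r=p\tilde{\chi}$ lies in $(2,p^{*})$ only when $p>2n/(n+2)$ (for $n\ge3$ and $1<p\le 2n/(n+2)$ one has $p^{*}\le r\le 2$), but this is harmless, since your H\"older step only needs the exponents $p^{*}/p$ and $n/p$ to exceed $1$, i.e.\ $p<n$.
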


For $k\in\mathbb{R}$ and $u\in C([-1,0];L^{2}(B_{1}))\cap L^{p}((-1,0);W^{1,p}(B_{1}))$, let
\begin{align*}
	(u-k)_{+}=\max\{u-k,0\},\quad(u-k)_{-}=\max\{k-u,0\}.
\end{align*}
A consequence of the proof of Lemma 3.1 in \cite{HLZ202601} with minor modifications gives the following energy inequality.
\begin{lemma}\label{lem003}
Suppose that $u$ satisfies \eqref{PO001} in $B_{1}\times(-1,0]$. Fix $[(x_{0},t_{0})+Q(\rho,\tau)]\subset B_{1}\times(-1,0]$ and write $v_{\pm}=(u-k)_{\pm}$ with $k\in\mathbb{R}$. Let $\xi\in C^{\infty}([(x_{0},t_{0})+Q(\rho,\tau)])$ be any cutoff function satisfying $0\leq\xi\leq1$ and $\xi=0$ on
	$\partial B_{\rho}(x_{0})\times(t_{0}-\tau,t_{0})$. Then for every $s\in(t_{0}-\tau,t_{0})$, we have
	\begin{align}\label{ENE01}
		&\int_{B_{\rho}(x_{0})}v_{\pm}^{2}\xi^{p}(x,s)dx+\frac{L_{1}}{6}\int_{B_{\rho}(x_{0})\times(t_{0}-\tau,s)}|\nabla(v_{\pm}\xi)|^{p}dxdt\notag\\
		&\leq\int_{B_{\rho}(x_{0})}v_{\pm}^{2}\xi^{p}(x,t_{0}-\tau)dx+C\int_{[(x_{0},t_{0})+Q(\rho,\tau)]}\big(v_{\pm}^{2}|\partial_{t}\xi|+v_{\pm}^{p}|\nabla\xi|^{p}\big)dxdt\notag\\
		&\quad+C\|\psi\|^{\frac{p}{p-1}}_{L^{\frac{pl_{0}}{p-1}}(B_{1}\times(-1,0))}|[(x_{0},t_{0})+Q(\rho,\tau)]\cap \{v_{\pm}>0\}|^{1-\frac{1}{l_{0}}}.
	\end{align}

\end{lemma}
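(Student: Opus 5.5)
Testing \eqref{PO001} against $\varphi=\pm(u-k)_{\pm}\xi^{p}$ is the approach I would take, following the proof of Lemma 3.1 in \cite{HLZ202601} and adding the treatment of the source term. The test function vanishes on $\partial B_\rho(x_0)\times(t_0-\tau,t_0)$, so it is admissible in both the Dirichlet and the Neumann case. Boundary terms on $\partial\Omega$ never appear, because the cylinder lies inside $B_1\times(-1,0]$ and $\xi$ vanishes on the lateral boundary. The time derivative is not available directly, so I would regularize with Steklov averages $u_h$, or equivalently use the formulation with $t_1,t_2$ and a time mollification of $v_\pm$, and then let $h\to0$. The only property of $u$ this needs is $u\in C([-1,0];L^2)$. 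The time term then gives
\[
\frac12\int_{B_\rho(x_0)} v_\pm^2\xi^p\,dx\Big|_{t_0-\tau}^{s}-\frac{p}{2}\int\!\!\int v_\pm^2\xi^{p-1}\partial_t\xi ,
\]
and the last integral is bounded by $C\int v_\pm^2|\partial_t\xi|$.

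For the diffusion term I would use $\nabla u=\pm\nabla v_\pm$ on $\{v_\pm>0\}$ together with \eqref{E01}. Coercivity gives the lower bound $L_1\int|\nabla v_\pm|^p\xi^p$. The cross term $p\int \mathbf a(x,\nabla u)\cdot\nabla\xi\, v_\pm\xi^{p-1}$ is bounded by $L_2 p\int|\nabla v_\pm|^{p-1}\xi^{p-1}v_\pm|\nabla\xi|$, and Young's inequality absorbs half of it, leaving $C\int v_\pm^p|\nabla\xi|^p$. Since $|\nabla(v_\pm\xi)|^p\le 2^{p-1}(|\nabla v_\pm|^p\xi^p+v_\pm^p|\nabla\xi|^p)$, I then get the gradient term on the left with a constant such as $L_1/6$, with the remaining piece moved to the right. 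The constant $1/6$ is not important; it only reflects the bookkeeping.

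The step I expect to be the main obstacle is the source term $\int\!\!\int b\,v_\pm\xi^p\le\int\!\!\int\psi v_\pm\xi^p$, because it has to be bounded without adding any new $v_\pm$-dependent quantity on the right beyond those already in \eqref{ENE01}. The plan is as follows. Set $A=Q\cap\{v_\pm>0\}$, where $Q=[(x_0,t_0)+Q(\rho,\tau)]$. First apply Young's inequality with exponents $p'$ and $p$:
\[
\psi v_\pm\xi^p\le \epsilon (v_\pm\xi)^{p\chi_0}\text{-type term}+C_\epsilon\psi^{p'}\mathbbm 1_A .
\]
Rather than this form, I would bound $\int\!\!\int\psi v_\pm\xi$ by Hölder's inequality with exponents $\frac{pl_0}{p-1}$ on $\psi$ and the dual exponent on $v_\pm\xi\mathbbm 1_A$. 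Then use Hölder's inequality on $A$, so that the measure factor appears. After that, the parabolic Sobolev embedding (Lemma \ref{lem001}) controls $\|v_\pm\xi\|_{L^{r}(Q)}$ by the left-hand energy, namely $\sup_s\int v_\pm^2\xi^p$ plus the gradient term. The assumption $l_0>\frac{n+p}{p}$ is exactly what makes the required exponent $r$ lie below the embedding exponent, which leaves a positive power of $|A|$. A final Young inequality absorbs the energy into the left-hand side. Writing $\mathcal E$ for the left-hand energy, the result is $C\|\psi\|^{p'}_{L^{pl_0/(p-1)}}|A|^{\gamma}$ plus $\frac12\mathcal E$.

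The difficulty is that the natural exponent $\gamma$ coming out of this route need not equal $1-\frac1{l_0}$. If that happens, I would fall back on the simpler route. Using Young's inequality, write $\psi v_\pm\le \psi^{p'}\mathbbm 1_A+ v_\pm^{p}$, then apply Hölder's inequality to $\int_A\psi^{p'}\le\|\psi\|^{p'}_{L^{pl_0/(p-1)}}|A|^{1-1/l_0}$. This gives exactly the stated term, with the harmless extra $\int v_\pm^p\xi^p$ either absorbed into the $v_\pm^p|\nabla\xi|^p$-type terms when $|\nabla\xi|\gtrsim1$ on the support, or handled by this argument. Collecting the three estimates and taking the supremum over $s$ finishes the proof.
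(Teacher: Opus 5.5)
\textbf{The source term has a genuine gap.} Your treatment of the time term (Steklov averages) and of the diffusion term (coercivity, then Young on the cross term) is standard and fine. To get exactly $\frac{L_1}{6}$ for large $p$, use $|a+b|^p\le(1+\eta)^{p-1}|a|^p+C_\eta|b|^p$ with $(1+\eta)^{p-1}\le2$ instead of the factor $2^{p-1}$.

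The gap is the source term, which is the only new point of the lemma, and neither of your routes closes it.

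\emph{The fallback.} Your Young step leaves $\int\!\!\int_Q v_\pm^p$, or, if you keep the weight, $\int\!\!\int (v_\pm\xi)^p$. The mechanism you propose for it fails. It cannot be absorbed into $C\int\!\!\int v_\pm^p|\nabla\xi|^p$, because a typical cutoff has $\xi\equiv1$, hence $\nabla\xi=0$, on an inner ball, where $v_\pm^p\xi^p=v_\pm^p$. Nothing else on the right of \eqref{ENE01} controls $v_\pm$ in the interior. Without the weight $\xi^p$ the term is not even localized away from $\partial B_\rho(x_0)$.

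\emph{The first route.} It is also not carried through:
\begin{itemize}
\item Lemma \ref{lem001}(ii) for $p>2$ needs $\sup_t\int|v_\pm\xi|^p$, which the left-hand side does not control.
\item With the $L^2$-based parabolic embedding into $L^{p(n+2)/n}$, Young produces the power $\frac{p(n+2)}{pn+p-n}$ of $\|\psi\|$, which equals $p'$ only when $p=2$.
\item Your worry about the $|A|$-exponent is actually unfounded. In that route it exceeds $1-\frac1{l_0}$, and $|A|\le|B_1|$.
\item The hypothesis $l_0>\frac{n+p}{p}$ is not what this step uses. It is needed later to make $\chi(1-\frac1{l_0})>1$ in the De Giorgi iterations.
\end{itemize}

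\textbf{The fix.} Keep the weight and absorb into the \emph{left-hand gradient term} via the spatial Poincar\'e inequality. Write
\[
\psi v_\pm\xi^p=\bigl(\psi\xi^{p-1}\mathbbm{1}_A\bigr)(v_\pm\xi)\le\epsilon(v_\pm\xi)^p+C_\epsilon\psi^{p'}\mathbbm{1}_A .
\]
For a.e.\ $t$, $v_\pm\xi(\cdot,t)\in W_0^{1,p}(B_\rho(x_0))$, since $\xi$ vanishes on the lateral boundary. Also $\rho\le1$. Hence
\[
\int_{B_\rho(x_0)}(v_\pm\xi)^p\,dx\le C(n,p)\rho^p\int_{B_\rho(x_0)}|\nabla(v_\pm\xi)|^p\,dx\le C(n,p)\int_{B_\rho(x_0)}|\nabla(v_\pm\xi)|^p\,dx .
\]
Choose $\epsilon=\epsilon(n,p,L_1)$ so that this costs, say, $\frac{L_1}{3}$ out of a gradient coefficient $\frac{L_1}{2}$. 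That leaves $\frac{L_1}{6}$, consistent with the stated constant. Finally, H\"older's inequality gives
\[
\int\!\!\int_A\psi^{p'}\,dxdt\le\|\psi\|^{p'}_{L^{\frac{pl_0}{p-1}}(B_1\times(-1,0))}\,|A|^{1-\frac{1}{l_0}} .
\]
This is exactly the last term of \eqref{ENE01}, with $C=C(n,p,L_1,L_2)$ and no dependence on $\|\psi\|$ hidden in the constant.
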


Based on Lamma \ref{lem003} and using the same argument as in Theorem 3.2 of \cite{HLZ202601}, we have the following global $L^{\infty}$ estimates.
\begin{lemma}\label{LEMZ01}
For any weak solution $u$ of \eqref{PO001} with $\Omega_T=Q(1,1)$, we have

$(i)$ for the Dirichlet case,
\begin{align*}
	\|u\|_{L^{\infty}(Q(1,1))}\leq\|g\|_{L^{\infty}(\partial B_{1})}+\|\phi\|_{L^{\infty}(B_{1})}+\overline{C}\|\psi\|_{L^{\frac{pl_{0}}{p-1}}(Q(1,1))}^{\frac{n+p}{(p-1)(n+2)}};
\end{align*}

$(ii)$ for homogeneous Neumann case,
\begin{align*}
	\|u\|_{L^{\infty}(Q(1,1))}\leq\|\phi\|_{L^{\infty}(B_{1})}+\overline{C}\|\psi\|_{L^{\frac{pl_{0}}{p-1}}(Q(1,1))}^{\frac{n+p}{(p-1)(n+2)}},
\end{align*}
where $\overline{C}=\overline{C}(n,p)$ and $l_{0}>\frac{n+p}{p}$.
\end{lemma}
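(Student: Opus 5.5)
The statement to prove is Lemma \ref{LEMZ01}, the global $L^\infty$ bound. We prove it by a De Giorgi iteration on the whole cylinder $Q(1,1)$, run with the truncated energy inequality of Lemma \ref{lem003} adapted to the global setting.

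\textbf{Choice of levels.} Set $M_0=\|g\|_{L^\infty(\partial B_1)}+\|\phi\|_{L^\infty(B_1)}$ in the Dirichlet case and $M_0=\|\phi\|_{L^\infty(B_1)}$ in the Neumann case. For levels $k\ge M_0$, the function $v_+=(u-k)_+$ has three properties:
\begin{itemize}
\item it vanishes at $t=-1$;
\item in the Dirichlet case its trace on $\partial B_1$ vanishes, so $v_+\in W^{1,p}_0$;
\item in the Neumann case it is an admissible test function by itself.
\end{itemize}
Hence we may test with $v_+$ and no cutoff, i.e.\ $\xi\equiv1$. This removes the terms $|\partial_t\xi|$ and $|\nabla\xi|$ in \eqref{ENE01}, and since $v_+(\cdot,-1)=0$ we obtain
\begin{align*}
\sup_{t}\int_{B_1}v_+^2\,dx+\int_{Q(1,1)}|\nabla v_+|^p\,dxdt\le C\|\psi\|^{p'}_{L^{pl_0/(p-1)}}\,|A_k|^{1-\frac1{l_0}},\qquad A_k=\{u>k\}.
\end{align*}
The same argument applied to $(u+k)_-$, i.e.\ to $-u$, controls the lower bound.

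\textbf{Sobolev step.} We convert the energy bound into a bound on $\int v_+^{r}$ over $Q(1,1)$ for a suitable power $r$, using the parabolic embedding.
\begin{itemize}
\item For $p<2$, Lemma \ref{lem001}(i) gives $\iint v_+^{p\tilde\chi}\le C\,Y^{1+2/n}$, where $Y$ denotes the energy.
\item For $p>2$, we use the analogous embedding with $\sup\int v^2$ in place of $\sup\int v^p$, giving exponent $p(n+2)/n$.
\end{itemize}
In the Neumann case, $v_+$ is not in $W_0^{1,p}$. We handle this by a Poincar\'e--Sobolev inequality for functions whose support has measure at most $|B_1|/2$, which is guaranteed once $k$ is large because $u\in L^2$. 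Alternatively, we extend $v_+$ across $\partial B_1$ by reflection.

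\textbf{Iteration.} Take levels $k_j=k(2-2^{-j})$ and set $Y_j=|A_{k_j}|$. Chebyshev's inequality combined with the Sobolev step gives
\begin{align*}
Y_{j+1}\le \frac{2^{jr}}{k^{r}}\iint v_{k_j}^{r}\le C b^j k^{-r}\|\psi\|^{\gamma}\,Y_j^{1+\sigma}.
\end{align*}
The gain $\sigma>0$ is positive exactly because $l_0>\frac{n+p}{p}$. The standard fast geometric convergence lemma then shows $Y_j\to0$, provided $Y_0\lesssim (k^{r}/\|\psi\|^\gamma)^{1/\sigma}$. Since $Y_0\le|Q(1,1)|$, this holds once $k\ge M_0+\overline C\|\psi\|^{\frac{n+p}{(p-1)(n+2)}}$.

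\textbf{Main obstacle.} The delicate point is the exponent bookkeeping. We must match the quadratic time term against the $p$-gradient term so that the resulting power of $\|\psi\|$ is exactly $\frac{n+p}{(p-1)(n+2)}$. We must also check that the iteration is superlinear in both regimes, $p<2$ and $p>2$. A second point needing care is the justification of $v_+$ as a test function: we use a Steklov average in time, and in the Dirichlet case we rely on the trace condition $k\ge\|g\|_\infty$.
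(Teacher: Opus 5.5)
\textbf{Dirichlet case.} Your argument here is sound and is the De Giorgi route the paper invokes by citing \cite{HLZ202601}. For $k\ge\max\{\|g\|_{L^\infty(\partial B_1)},\|\phi\|_{L^\infty(B_1)}\}$, the truncation $(u-k)_+$ has zero lateral trace and vanishes at $t=-1$. The Poincar\'e inequality in $W^{1,p}_0(B_1)$ then absorbs the Young remainder $\epsilon\iint(u-k)_+^p$. The embedding with $\sup_t\int v^2$, which holds for every $p>1$, gives exactly the threshold $\|\psi\|^{\frac{n+p}{(p-1)(n+2)}}$. One bookkeeping fix: use levels $k_j=M_0+d(1-2^{-j})$, not $k(2-2^{-j})$. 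The latter only yields $\sup u\le 2k$, i.e.\ a coefficient $2$ in front of $M_0$.

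\textbf{Neumann case.} Here there is a genuine gap, and it cannot be closed in the stated form. Without zero trace, two steps are unavailable:
\begin{itemize}
\item the absorption of $\epsilon\iint(u-k)_+^p$, which your energy bound by $\|\psi\|^{p'}|A_k|^{1-1/l_0}$ needs;
\item the parabolic Sobolev step.
\end{itemize}
Everything therefore rests on the small-support condition $|\{u(\cdot,t)>k\}|\le|B_1|/2$. Saying this holds ``because $u\in L^2$'' only produces a level that depends on $u$ itself, not on the data. A data-dependent level comes from the $L^2$ estimate for $(u-\|\phi\|_{L^\infty(B_1)})_+$, exactly as in \eqref{eq:L2-seed}:
\[
\|(u-\|\phi\|_{L^\infty(B_1)})_+(\cdot,t)\|_{L^2(B_1)}\le\int_{-1}^0\|\psi(\cdot,s)\|_{L^2(B_1)}\,ds .
\]
Chebyshev then forces you to start the iteration at $k_{\mathrm b}=\|\phi\|_{L^\infty(B_1)}+C\|\psi\|_{L^1(-1,0;L^2(B_1))}$, which is linear in $\psi$. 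What your method actually proves is
\[
\|u\|_{L^\infty}\le\|\phi\|_{L^\infty}+C\|\psi\|_{L^1(-1,0;L^2(B_1))}+\overline{C}\|\psi\|^{\frac{n+p}{(p-1)(n+2)}},
\]
the analogue of \eqref{eq:window-N}, not (ii).

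\textbf{The linear term cannot be removed.} Take $\phi\equiv0$ and $b\equiv\lambda>0$. The spatially constant function $u=\lambda(t+1)$ has zero flux, since $\mathbf a(x,0)=0$, so it is a weak Neumann solution with $\|u\|_{L^\infty(Q(1,1))}=\lambda$. The right side of (ii) equals $\overline{C}\bigl(\lambda|B_1|^{\frac{p-1}{pl_0}}\bigr)^{\frac{n+p}{(p-1)(n+2)}}$. That exponent is $<1$ for $p>2$ and $>1$ for $p<2$, so (ii) fails as $\lambda\to\infty$, respectively as $\lambda\to0$. Reflecting across $\partial B_1$ does not help: the obstruction is precisely the spatially constant mode, which no gradient term controls. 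So this is not a defect of your approach alone. Inequality (ii) as written is false for $p\neq2$, and it needs an extra term linear in $\|\psi\|$.
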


Recall the classical Poisson kernel as follows:
\begin{align*}
	P(x,\xi)=\frac{1-|x|^{2}}{|\mathbb{S}^{n-1}||x-\xi|^{n}},
	\quad x\in B_{1},\;\,\xi\in\partial B_{1}.
\end{align*}
For $g\in L^{\infty}(\partial B_{1})\cap W^{1-\frac{1}{p},p}(\partial B_{1})$, define
\begin{align}\label{POISSON01}
	G_{0}(x)=\int_{\partial B_{1}}P(x,\xi)g(\xi)dS.
\end{align}
Then we have $G_{0}\in C^{\infty}(B_{1})$, $\Delta G_{0}=0$ in $B_{1}$, and
\begin{align}\label{POISSON02}
\begin{cases}	
	\|G_{0}\|_{L^{\infty}(B_{1})}\leq\|g\|_{L^{\infty}(\partial B_{1})},\quad \mathrm{Tr}G_{0}=g,\\
		\|G_{0}\|_{W^{1,p}(B_{1})}\leq C(n,p)\|g\|_{W^{1-\frac{1}{p},p}(\partial B_{1})},
\end{cases}	
\end{align}
where $\mathrm{Tr}$ is the trace operator defined by \eqref{TRACE}.\ The harmonicity of $G_0$ follows by differentiation under the
integral sign, see \cite[pp.\ 111--112]{ABR2001}. The $L^\infty$ estimate follows from \cite[Theorem 6.4(b)]{ABR2001}. The Sobolev estimate follows from the Poisson-integral characterizations in \cite[pp.\ 66--69, (2.1), (3.1), and Proposition 4.1]{G1974}. The identity $\operatorname{Tr}G_0=g$ then follows by smooth approximation and the continuity of the trace operator.

We next establish the following global $L^{p}_{t}W^{1,p}$ estimates. 
\begin{theorem}\label{thm03}
Assume that $u$ solves \eqref{PO001} weakly in $Q(1,1)$.\ Then we have

$(i)$ for the Dirichlet case,
\begin{align}\label{E05}
&\|u\|_{L^{p}((-1,0);W^{1,p}(B_{1}))}\notag\\
&\leq C\left(\big(\|\phi\|_{L^{\infty}(B_{1})}+\|g\|_{L^{\infty}(\partial B_{1})}\big)^{\frac{2}{p}}+\|u\|_{L^{\infty}(Q(1,1))}+\|g\|_{L^{\infty}(\partial B_{1})}\right)\notag\\
&\quad+C\Big(\|g\|_{W^{1-\frac{1}{p},p}(\partial B_{1})}+\|\psi\|_{L^{\frac{p}{p-1}}(Q(1,1))}^{\frac{1}{p-1}}\Big);
\end{align}

$(ii)$ for homogeneous Neumann case,
\begin{align}\label{E06}
&\|u\|_{L^{p}((-1,0);W^{1,p}(B_{1}))}\leq C\Big(\|\phi\|_{L^{\infty}(B_{1})}^{\frac{2}{p}}+\|u\|_{L^{\infty}(Q(1,1))}+\|\psi\|_{L^{\frac{p}{p-1}}(Q(1,1))}^{\frac{1}{p-1}}\Big),
\end{align}
where $C=C(n,p,L_{1},L_{2},L_{3})$.
	
\end{theorem}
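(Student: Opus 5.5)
The plan is to test the weak formulation with $\varphi=u-G$ on $B_1\times(-1,s)$ and absorb every term into the left-hand side. Here $G=G_0$, the Poisson extension \eqref{POISSON01}, in the Dirichlet case, and $G=0$ in the Neumann case. By \eqref{POISSON02}, $\mathrm{Tr}(u-G_0)=g-g=0$, so $u-G_0\in L^p((-1,0);W^{1,p}_0(B_1))$ is admissible. Since $u$ has no time derivative a priori, I will use a Steklov average $u_h$ in time (or Lipschitz mollification) and let $h\to0$. This gives, for a.e. $s\in(-1,0]$,
\begin{align*}
\frac12\int_{B_1}(u-G)^2(x,s)\,dx+\int_{-1}^{s}\!\!\int_{B_1}\mathbf{a}(x,\nabla u)\cdot\nabla(u-G)\,dxdt
=\frac12\int_{B_1}(\phi-G)^2\,dx+\int_{-1}^{s}\!\!\int_{B_1}b\,(u-G)\,dxdt .
\end{align*}
This works because $G$ is time-independent, so $\partial_t(u-G)=\partial_t u$. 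The initial term is bounded by $C(\|\phi\|_{L^\infty}+\|g\|_{L^\infty})^2$. This is the source of the power $2/p$ after taking $p$-th roots.

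Next I will estimate the diffusion terms. By \eqref{E01}, $\mathbf{a}(x,\nabla u)\cdot\nabla u\ge L_1|\nabla u|^p$ and $|\mathbf{a}(x,\nabla u)\cdot\nabla G|\le L_2|\nabla u|^{p-1}|\nabla G|\le \frac{L_1}{4}|\nabla u|^p+C|\nabla G|^p$ by Young's inequality. After integration in time, the last term gives $C\|G_0\|^p_{W^{1,p}}\le C\|g\|^p_{W^{1-1/p,p}}$. Only \eqref{E01} is needed here; the monotonicity \eqref{E32} plays no role.

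The source term is the one step needing care. The bound $|b|\le\psi$ gives
\[
\Big|\iint b(u-G)\Big|\le \|\psi\|_{L^{p'}(Q(1,1))}\|u-G\|_{L^p(Q(1,1))}.
\]
I split this into a zero-order part and a gradient part. For the zero-order part, $\|u-G\|_{L^p}\le C(\|u\|_{L^\infty}+\|g\|_{L^\infty})$. For the gradient part I use the Poincaré inequality (Dirichlet case) or simply the $L^\infty$ bound (Neumann case). In either case Young's inequality with exponents $p',p$ gives
\[
\|\psi\|_{L^{p'}}\|u-G\|_{L^p}\le C\|\psi\|_{L^{p'}}^{p'}+\tfrac{1}{2}\|u-G\|^p_{L^p},
\]
and $\|u-G\|_{L^p}^p$ is bounded by $C(\|u\|_{L^\infty}+\|g\|_{L^\infty})^p$. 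So no absorption into the gradient is needed at all.

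Collecting the estimates, I get
\[
\frac{L_1}{2}\|\nabla u\|^p_{L^p(Q(1,1))}\le C\Big[(\|\phi\|_\infty+\|g\|_\infty)^2+\|g\|^p_{W^{1-1/p,p}}+\|\psi\|^{p'}_{L^{p'}}+(\|u\|_\infty+\|g\|_\infty)^p\Big].
\]
Adding $\|u\|^p_{L^p}\le C\|u\|^p_\infty$ and taking $p$-th roots yields \eqref{E05}, since $p'/p=1/(p-1)$. The Neumann case \eqref{E06} is identical with $G=0$; there every $\varphi\in L^p W^{1,p}$ is admissible, so no trace issue arises.

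I expect the main obstacle to be the rigorous justification of the energy identity, namely the Steklov averaging and the time integration with the test function $u-G$. The standard route handles it: the identity $\int_{B_1}\partial_t u_h\,(u_h-G)\,dx=\frac12\frac{d}{dt}\int_{B_1}(u_h-G)^2\,dx$, convergence $u_h\to u$ in $L^pW^{1,p}$, and continuity $u\in C([-1,0];L^2)$ to recover the endpoint values.
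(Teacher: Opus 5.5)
Your proposal is correct and follows the paper's proof essentially step for step. You test with $u-G_0$ (resp.\ $u$) via Steklov averaging, absorb $\mathbf{a}(x,\nabla u)\cdot\nabla G_0$ by Young's inequality together with \eqref{POISSON02}, and bound $\psi|u-G_0|\le C(|u-G_0|^p+\psi^{p'})$ using the $L^\infty$ bound on $u$, exactly as in the paper. The detour through a Poincar\'e-type splitting of the source term is unnecessary, as you yourself conclude.
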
	

\begin{proof}
The proof is divided into the following two parts.

\noindent{\bf Part 1.}\ \emph{The Dirichlet case.}\ By testing \eqref{PO001} with $v=u-G_{0}$, justified by
a standard Steklov averaging argument, we obtain that for $s\in(-1,0]$,
\begin{align}\label{E03}
&\frac{1}{2}\|v(\cdot,s)\|_{L^{2}(B_{1})}^{2}+\int_{-1}^{s}\int_{B_{1}}\mathbf{a}(x,\nabla u)\cdot\nabla u dxdt\notag\\
&=\frac{1}{2}\|\phi-G_{0}\|_{L^{2}(B_{1})}^{2}+\int_{-1}^{s}\int_{B_{1}}\big(\mathbf{a}(x,\nabla u)\cdot\nabla G_{0}+b(x,t,u)v\big)dxdt,
\end{align}	
where $G_{0}$ is given in \eqref{POISSON01}. From \eqref{E01} and \eqref{E02}, it follows from Young's inequality that $\mathbf{a}(x,\nabla u)\cdot\nabla u\geq L_{1}|\nabla u|^{p}$,
\begin{align*}
|\mathbf{a}(x,\nabla u)\cdot\nabla G_{0}|\leq L_{2}|\nabla u|^{p-1}|\nabla G_{0}|\leq\frac{L_{1}}{2}|\nabla u|^{p}+C|\nabla G_{0}|^{p},	
\end{align*}		
and
\begin{align*}
|b(x,t,u)v|\leq\psi|v|\leq C\big(|v|^{p}+\psi^{\frac{p}{p-1}}\big).	
\end{align*}		
Substituting these inequalities into \eqref{E03}, we deduce from \eqref{POISSON02} that
\begin{align*}
&\sup_{s\in(-1,0)}\|u(\cdot,s)-G_{0}\|^{2}_{L^{2}(B_{1})}+\|u\|^{p}_{L^{p}((-1,0);W^{1,p}(B_{1}))}\notag\\
&\leq C\Big(\|\phi-G_{0}\|_{L^{2}(B_{1})}^{2}+\|u-G_{0}\|_{L^{p}(Q(1,1))}^{p}+\|g\|_{W^{1-\frac{1}{p},p}(\partial B_{1})}^{p}+\|\psi^{\frac{p}{p-1}}\|_{L^{1}(Q(1,1))}\Big)\\	
&\leq C\left(\|\phi\|_{L^{\infty}(B_{1})}^{2}+\|g\|_{L^{\infty}(\partial B_{1})}^{2}+\big(\|u\|_{L^{\infty}(Q(1,1))}+\|g\|_{L^{\infty}(\partial B_{1})}\big)^{p}\right)\\
&\quad +\Big(\|g\|_{W^{1-\frac{1}{p},p}(\partial B_{1})}^{p}+\|\psi^{\frac{p}{p-1}}\|_{L^{1}(Q(1,1))}\Big),
\end{align*}	
which implies that \eqref{E05} holds.

\noindent{\bf Part 2.}\ \emph{The homogeneous Neumann case.}\ Applying a standard Steklov averaging argument to justify the choice of $u$ as a test function
in \eqref{PO001}, we deduce that for $s\in(-1,0]$,
\begin{align*}
&\frac{1}{2}\|u(\cdot,s)\|_{L^{2}(B_{1})}^{2}+\int_{-1}^{s}\int_{B_{1}}\mathbf{a}(x,\nabla u)\cdot\nabla u dxdt\notag\\
&=\frac{1}{2}\|\phi\|_{L^{2}(B_{1})}^{2}+\int_{-1}^{s}\int_{B_{1}}b(x,t,u)udxdt,
\end{align*}	
which, together with \eqref{E01}, \eqref{E02} and Young's inequality, shows that
\begin{align*}
&\sup_{s\in(-1,0)}\|u(\cdot,s)\|^{2}_{L^{2}(B_{1})}+\|u\|^{p}_{L^{p}((-1,0);W^{1,p}(B_{1}))}\notag\\
&\leq C\big(\|\phi\|_{L^{2}(B_{1})}^{2}+\|u\|_{L^{p}(Q(1,1))}^{p}+\|\psi^{\frac{p}{p-1}}\|_{L^{1}(Q(1,1))}\big).
\end{align*}	
This proves \eqref{E06}.
	
\end{proof}

\section{Temporal H\"{o}lder regularity}\label{SEC003}

Define
\begin{align*}
	\mathcal{Q}_{1}=\|u\|_{L^\infty(Q(1,1))},\quad\mathcal{Q}_{2}=\|\nabla u\|_{L^p(Q(1,1))},\quad \mathcal{Q}_{3}=\|\psi\|_{L^{p'}((-1,0);L^q(B_1))},
\end{align*}
and
\begin{equation}\label{Z21}
	\begin{aligned}
		\mathcal{Q}_{4}&=\begin{cases}
			\|g\|_{L^\infty(\partial B_1)},&\text{for the Dirichlet case},\\
			0,&\text{for homogeneous Neumann case},
		\end{cases}\\
		\mathcal{Q}_{5}&=\begin{cases}
			\mathcal{Q}_{3},&\text{for homogeneous Neumann case},\\
			0,&\text{for the Dirichlet case}.
		\end{cases}
	\end{aligned}
\end{equation}
Denote
\begin{align}\label{E89}
\mathscr{C}=\max\big\{\mathcal{Q}_{1}+\mathcal{Q}_{4},\mathcal{Q}_{3}^{\mu}(1+\mathcal{Q}_{2})^\gamma+\mathcal{Q}_{5}\big\},	
\end{align}	
where
\begin{align}\label{E86}
\gamma=\frac{\vartheta(2-p)_+}{2},\quad\mu=\frac{p}{p+\vartheta(p-2)_{+}}.
\end{align}	
We now establish the desired temporal H\"{o}lder estimate as follows.
\begin{theorem}\label{thm:unified}
	There is a constant $C=C(n,p,q,\vartheta,L_{3})>0$ such that for all $\delta\in(0,1)$ and $-1+\delta\leq s<t\leq0$,
	\begin{equation}\label{eq:main-infty}
		\|u(\cdot,t)-u(\cdot,s)\|_{L^{\infty}(B_{1})}\leq C\mathscr{C}\delta^{-\beta}|t-s|^{\alpha},
	\end{equation}
	where $\alpha,\beta$ and $\mathscr{C}$ are defined by \eqref{eq:unified-exponents} and \eqref{E89}, respectively.
\end{theorem}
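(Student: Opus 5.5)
The plan is to compare $u$ with a time-rescaled copy of itself, using the homogeneity $\mathbf{a}(x,\kappa\xi)=\kappa^{p-1}\mathbf{a}(x,\xi)$ in \eqref{E01}. A direct check shows that if $\partial_t u-\mathrm{div}\,\mathbf{a}(x,\nabla u)=b$, then for $\lambda>0$ and an anchor time $\tau_0$ the function
\begin{equation*}
w(x,\tau)=\lambda^{\frac{1}{p-2}}u\bigl(x,\tau_0+\lambda(\tau-\tau_0)\bigr)
\end{equation*}
solves the same equation with forcing $\lambda^{\frac{p-1}{p-2}}b\bigl(x,\tau_0+\lambda(\tau-\tau_0),u(\cdot)\bigr)$. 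Both forcings are bounded by $C\psi$ when $\lambda\in[1,2]$.

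Fix $-1+\delta\le s<t\le0$ and a comparison length $h\in(0,\delta]$ to be optimized. Set $\tau_0=s-h$ and $\lambda=1+(t-s)/h$, so that $w(\cdot,s)=\lambda^{\frac1{p-2}}u(\cdot,t)$. I would treat only $t-s\le h$; the complementary range is handled at the end.

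At the anchor time the initial discrepancy is $(w-u)(\cdot,\tau_0)=(\lambda^{\frac1{p-2}}-1)u(\cdot,\tau_0)$, of size $\le C\mathcal{Q}_1(t-s)/h$. On the lateral boundary the discrepancy is $(\lambda^{\frac1{p-2}}-1)g$ in the Dirichlet case, of size $\le C\mathcal{Q}_4(t-s)/h$. In the Neumann case the flux condition is preserved exactly. This is why $\mathcal{Q}_1+\mathcal{Q}_4$ enters $\mathscr{C}$. Consequently
\begin{equation*}
\|u(\cdot,t)-u(\cdot,s)\|_{L^\infty}\le\|w(\cdot,s)-u(\cdot,s)\|_{L^\infty}+C\mathcal{Q}_1\frac{t-s}{h},
\end{equation*}
and the task reduces to an $L^\infty$ bound for $v=w-u$ on $B_1\times(\tau_0,s]$.

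The core step, and the one I expect to be the main obstacle, is this $L^\infty$ bound for $v$ on the short interval of length $h$, since the forcing difference is only in $L^{p'}_tL^q_x$. I would test the difference of the two equations with $(v-k)_+$ for levels $k\ge k_0:=C(\mathcal{Q}_1+\mathcal{Q}_4)(t-s)/h$, which lie above the initial and boundary discrepancy. The monotonicity \eqref{E32} then gives a truncated energy inequality: a $\sup_\tau\|(v-k)_+\|_{L^2}^2$ term plus a gradient term in $L^p$ (for $p>2$), or in the weighted form $|\nabla(v-k)_+|^2(1+|\nabla u|+|\nabla w|)^{p-2}$ (for $p<2$). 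The right-hand side is $\int\!\!\int 2C\psi(v-k)_+$.

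In the singular case I would pass from the weighted gradient to $\|\nabla(v-k)_+\|_{L^p}$ by H\"older, which produces a factor $(1+\mathcal{Q}_2)^{(2-p)\cdot}$; this is the source of $\gamma$. The forcing term is bounded by H\"older in $L^{p'}_tL^q_x$, combined with a support-sensitive Sobolev inequality from Lemma \ref{lem001} applied to $(v-k)_+$. This yields a factor $|A_k|^{\epsilon}$, where $A_k$ is the space-time set on which $v>k$. The exponent $\epsilon$ is strictly positive exactly when $q>q_{\mathrm c}$, and the choice $\vartheta>q_{\mathrm c}/q$ quantifies it. The measure $|A_k|$ carries a factor $h$ through the time length. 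Running the standard De Giorgi iteration over levels $k_j=k_0+M(1-2^{-j})$ should then give
\begin{equation*}
\sup(w-u)\le k_0+C\mathcal{Q}_3^{\mu}(1+\mathcal{Q}_2)^{\gamma}h^{\beta}.
\end{equation*}
The power $\beta=\frac{1-\vartheta}{p+\vartheta(p-2)_+}$ is the resulting gain in $h$, after balancing the $L^2$ and $L^p$ homogeneities; for $p>2$ this balancing is where $\mu$ comes from. The same argument applied to $u-w$ bounds the negative part. In the Neumann case the mean is not pinned by boundary data, so an extra $\mathcal{Q}_3$ ($=\mathcal{Q}_5$) appears.

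Combining the pieces gives
\begin{equation*}
\|u(\cdot,t)-u(\cdot,s)\|_{L^\infty}\le C\mathscr{C}\Bigl(\frac{t-s}{h}+h^{\beta}\Bigr).
\end{equation*}
If $(t-s)^{\frac1{1+\beta}}\le\delta$, choose $h=(t-s)^{\frac1{1+\beta}}$. Then $h\ge t-s$ automatically, and the bound is $C\mathscr{C}(t-s)^{\beta/(1+\beta)}=C\mathscr{C}(t-s)^{\alpha}$. Otherwise $t-s>\delta^{1+\beta}$, i.e. $(t-s)^{\alpha}>\delta^{\beta}$, and the trivial bound $2\mathcal{Q}_1\le2\mathscr{C}$ already gives
\begin{equation*}
\|u(\cdot,t)-u(\cdot,s)\|_{L^\infty}\le2\mathscr{C}\delta^{-\beta}\delta^{\beta}\le 2\mathscr{C}\delta^{-\beta}(t-s)^{\alpha}.
\end{equation*}
In both cases \eqref{eq:main-infty} follows.
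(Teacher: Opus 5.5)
Your architecture coincides with the paper's: the rescaled copy $w$, the discrepancy of order $(t-s)/h$ at the anchor time and on the lateral boundary, the truncated energy for $v=w-u$ from \eqref{E32}, De Giorgi iteration, and the choice $h=(t-s)^{1/(1+\beta)}$. Your trivial-bound endgame is also fine. The gap is the core step: Lemma \ref{lem001} cannot deliver the gain you claim.

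For $p>2$, part (ii) of Lemma \ref{lem001} needs $\sup_\tau\|(v-k)_+\|_{L^p}^p$. Testing with $(v-k)_+$ does not control this quantity, and no intrinsic scaling is available here to trade $L^2$ for $L^p$. More fundamentally, the diagonal embedding $L^\infty_tL^2_x\cap L^p_tW^{1,p}_0\hookrightarrow L^{p(n+2)/n}_{t,x}$ (part (i), which in this form holds for every $p$) freezes the interpolation. Inserting it into $\|(v-k)_+\|_{L^p_tL^{q'}_x}$ and using H\"older yields exactly the paper's bound \eqref{eq:mixed-calculation} with the single value $\vartheta=n/(n+2)$. Two consequences follow:
\begin{itemize}
\item the superlinear exponent $\kappa=2\mu(\vartheta/q_{\mathrm c}-1/q)$ is positive only for $q>\frac{n+2}{n}q_{\mathrm c}$, e.g.\ $q>n+2$ instead of $q>n$ when $p>2$;
\item the exponents produced are $\beta,\mu,\gamma$ evaluated at $n/(n+2)$, not at the prescribed $\vartheta\in(q_{\mathrm c}/q,1)$.
\end{itemize}
So your sentence that the gain is positive ``exactly when $q>q_{\mathrm c}$, and $\vartheta>q_{\mathrm c}/q$ quantifies it'' has no mechanism behind it.

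The missing idea is a one-parameter, slice-wise support-gain inequality, Lemma \ref{lem:spatial}: $\|f\|_{L^{q'}}\le C|E|^{\sigma_\vartheta}\|f\|_{L^2}^{1-\vartheta}\|\nabla f\|_{L^p}^{\vartheta}$ with $\vartheta$ free. It comes from $W^{1,\theta_{\mathrm c}}\hookrightarrow L^2$ plus H\"older on the support. The power of $h$ then comes from H\"older in time, $\int\|\nabla v_k\|_{L^p}^{p\vartheta}d\tau\le h^{1-\vartheta}\bigl(\int\|\nabla v_k\|_{L^p}^{p}d\tau\bigr)^{\vartheta}$, not from the space-time measure $|A_k|$. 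The iteration runs on $Y(k)=\operatorname{ess\,sup}_\tau|\{v(\cdot,\tau)>k\}|$ via $(j-k)^2Y(j)\le\mathcal{E}_k$.

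The Neumann case has a second hole. Lemma \ref{lem001} requires zero trace, so as written you have no Sobolev inequality there at all; ``the mean is not pinned'' only names the difficulty. The paper handles this in three steps:
\begin{itemize}
\item It derives the $L^2$ seed $\|(v-k_0)_+(\cdot,\tau)\|_{L^2}\le|B_1|^{1/2-1/q}\mathcal{K}_{F}h^{1/p}$ from $Y_0'\le2\|F(\cdot,\tau)\|_{L^2}\sqrt{Y_0}$ with $Y_0(\tau_0)=0$.
\item It starts the iteration at $k_{\mathrm b}=k_0+2|B_1|^{-1/q}\mathcal{K}_{F}h^{1/p}$, so that by Chebyshev every slice superlevel set has measure at most $|B_1|/4$.
\item The Sobolev--Poincar\'e alternative of Lemma \ref{lem:spatial} then applies.
\end{itemize}
This raised level is precisely the extra $\mathcal{Q}_5$ contribution, since $h^{1/p}\le h^{\beta}$.
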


\begin{remark}
	In light of \eqref{E13} below, we see that the constant $C$ in \eqref{eq:main-infty} diverges to infinity, as $p\rightarrow2$.
	
\end{remark}

\subsection{The proof of Theorem \ref{thm:unified}}

Set
\begin{equation}\label{eq:spatial-gap}
	d_{n,p}=\frac{1}{\theta_{\mathrm{c}}}-\frac{1}{p}
	=\frac{1}{2}+\frac{1}{n}-\frac{1}{p}>0,\quad \sigma_\vartheta=\frac{1}{2}-\frac{1}{q}+\vartheta d_{n,p},
\end{equation}
where $\theta_{\mathrm{c}}$ and $\vartheta$ are, respectively, given by \eqref{eq:q-critical} and \eqref{eq:theta}.\ We first establish spatial estimates with support gain as follows.
\begin{lemma}\label{lem:spatial}
	Let $u\in W^{1,p}(B_1)$ and $E=\{x\in B_1:u(x)\neq0\}$.\ Assume either $u\in W_0^{1,p}(B_1)$ or $|E|\leq|B_1|/4$. Then we have
	\begin{equation*}
		\|u\|_{L^{q'}(B_1)}\leq C(n) |E|^{\sigma_\vartheta}\|u\|_{L^2(B_1)}^{1-\vartheta}\|\nabla u\|_{L^p(B_1)}^{\vartheta},
	\end{equation*}
	where $q'=\frac{q}{q-1}$ and $\sigma_{\vartheta}$ is given in \eqref{eq:spatial-gap}.
\end{lemma}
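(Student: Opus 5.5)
The plan is to reduce everything to a single Sobolev inequality at the critical exponent $\theta_{\mathrm c}$, where $W^{1,\theta_{\mathrm c}}$ embeds exactly into $L^2$ because $\frac{1}{\theta_{\mathrm c}}-\frac1n=\frac12$. The support gain will then come from two applications of H\"older's inequality on $E$. The key point is that $\nabla u=0$ a.e.\ on $B_1\setminus E$, so every $L^m$ norm of $u$ or $\nabla u$ is really an integral over $E$.

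First I check that the exponents are compatible. For $\theta_{\mathrm c}<p<2$ we have $d_{n,p}=1/q_{\mathrm c}$ and $d_{n,p}<\frac12$, since $\frac1n\le\frac12<\frac1p$. Hence $q>q_{\mathrm c}>2$. For $p>2$ we have $q>q_{\mathrm c}=n\ge2$. In both cases $q'\le2$.

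\textbf{Step 1.} Since $q'\le 2$, H\"older on $E$ gives $\|u\|_{L^{q'}(B_1)}\le |E|^{\frac1{q'}-\frac12}\|u\|_{L^2(B_1)}=|E|^{\frac12-\frac1q}\|u\|_{L^2(B_1)}$. I then write $\|u\|_{L^2}=\|u\|_{L^2}^{1-\vartheta}\|u\|_{L^2}^{\vartheta}$ and keep the first factor.

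\textbf{Step 2.} I claim
\begin{equation*}
\|u\|_{L^2(B_1)}\le C(n)\|\nabla u\|_{L^{\theta_{\mathrm c}}(B_1)} .
\end{equation*}
If $u\in W^{1,p}_0(B_1)$, then $u\in W^{1,\theta_{\mathrm c}}_0(B_1)$ because $p>\theta_{\mathrm c}$ and $B_1$ is bounded. The claim is then the Gagliardo--Nirenberg--Sobolev inequality with exponent $\theta_{\mathrm c}<n$, whose Sobolev conjugate is $\theta_{\mathrm c}^*=2$.

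If instead $|E|\le|B_1|/4$, then $u$ vanishes on a set of measure at least $\frac34|B_1|$. I apply the Sobolev embedding $W^{1,\theta_{\mathrm c}}(B_1)\hookrightarrow L^2(B_1)$ to $u-\bar u$, where $\bar u$ is the average of $u$ over $B_1$, together with the Poincar\'e inequality, to get $\|u-\bar u\|_{L^2}\le C\|\nabla u\|_{L^{\theta_{\mathrm c}}}$. Then I absorb the constant:
\begin{equation*}
|\bar u|\,|B_1\setminus E|^{1/2}=\|u-\bar u\|_{L^2(B_1\setminus E)}\le \|u-\bar u\|_{L^2},
\end{equation*}
and since $|B_1\setminus E|\ge\frac34|B_1|$, this gives $|\bar u|\le C\|\nabla u\|_{L^{\theta_{\mathrm c}}}$ and hence the claim. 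Alternatively, one can argue by compactness, or derive the zero-set Poincar\'e inequality from the isoperimetric inequality \eqref{pro001}. All constants depend only on $n$, because $\theta_{\mathrm c}$ depends only on $n$.

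\textbf{Step 3.} Since $\nabla u$ vanishes a.e.\ off $E$, H\"older again gives
\begin{equation*}
\|\nabla u\|_{L^{\theta_{\mathrm c}}(B_1)}\le |E|^{\frac1{\theta_{\mathrm c}}-\frac1p}\|\nabla u\|_{L^p(B_1)}=|E|^{d_{n,p}}\|\nabla u\|_{L^p(B_1)} .
\end{equation*}
Combining Steps 1--3 yields
\begin{equation*}
\|u\|_{L^{q'}}\le |E|^{\frac12-\frac1q}\|u\|_{L^2}^{1-\vartheta}\big(C|E|^{d_{n,p}}\|\nabla u\|_{L^p}\big)^{\vartheta}=C(n)|E|^{\sigma_\vartheta}\|u\|_{L^2}^{1-\vartheta}\|\nabla u\|_{L^p}^{\vartheta},
\end{equation*}
which is exactly the statement. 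This route works uniformly for every $p>\theta_{\mathrm c}$, including $p\ge n$, because it never uses Sobolev at the exponent $p$ itself.

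The only step needing real care is the non-$W_0$ case of Step 2, namely the Sobolev--Poincar\'e inequality for functions vanishing on a large set. I expect the averaging/absorption argument above to settle it directly, with the embedding constant depending only on $n$.
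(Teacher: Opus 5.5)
Your proposal is correct and follows the paper's proof essentially line by line: the critical Sobolev inequality $\|u\|_{L^2(B_1)}\le C(n)\|\nabla u\|_{L^{\theta_{\mathrm{c}}}(B_1)}$ (in the non-$W_0$ case via Sobolev--Poincar\'e plus absorption of the mean using $|B_1\setminus E|\ge\frac34|B_1|$), followed by H\"older on $E$ for $\nabla u$ and then for $u$. In your absorption step you bound $\|u-\bar u\|_{L^2(B_1\setminus E)}$ by the full $\|u-\bar u\|_{L^2(B_1)}$ and hence by $\|\nabla u\|_{L^{\theta_{\mathrm{c}}}(B_1)}$. That is the correct form of the corresponding line in the paper, which mistakenly takes the gradient norm over $B_1\setminus E$.
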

\begin{proof}
	If $u$ has zero trace, we have from the Sobolev inequality that $\|u\|_{L^{2}(B_{1})}\leq C(n)\|\nabla u\|_{L^{\theta_{\mathrm{c}}}(B_{1})}$. For the other case, let $\bar{u}=|B_{1}|^{-1}\int_{B_1}u(x)dx$. Then the Sobolev--Poincar\'e inequality gives that
	\begin{align*}
		\|u-\bar{u}\|_{L^{2}(B_{1})}\leq C(n)\|\nabla u\|_{L^{\theta_{\mathrm{c}}}(B_{1})}.
	\end{align*}
	Since $|B_{1}\setminus E|\geq3|B_{1}|/4$, we have
	\begin{align*}
		|\bar{u}||B_{1}\setminus E|^{\frac{1}{2}}=\|u-\bar{u}\|_{L^2(B_{1}\setminus E)}\leq C(n)\|\nabla u\|_{L^{\theta_{\mathrm{c}}}(B_{1}\setminus E)}.
	\end{align*}
	Consequently, we obtain
	\begin{align}\label{eq:001}
		\|u\|_{L^{2}(B_{1})}\leq\|u-\bar{u}\|_{L^{2}(B_{1})}+|B_{1}|^{\frac{1}{2}}|\bar{u}|\leq C(n)\|\nabla u\|_{L^{\theta_{\mathrm{c}}}(B_{1})}.
	\end{align}
From H\"older's inequality, we derive
	\begin{align*}
		\|\nabla u\|_{L^{\theta_{\mathrm{c}}}(B_{1})} \leq |E|^{\frac{1}{\theta_{\mathrm{c}}}-\frac{1}{p}}\|\nabla u\|_{L^{p}(E)}.
	\end{align*}
	Inserting this into \eqref{eq:001}, we deduce
		\begin{equation}\label{eq:spatial-L2}
		\|u\|_{L^2(B_1)} \leq C(n) |E|^{d_{n,p}}\|\nabla u\|_{L^p(B_1)},
	\end{equation}
where $d_{n,p}$ is defined by \eqref{eq:spatial-gap}. From \eqref{eq:q-critical}, we see that $q>\max\{2,p'\}$ and $q'=\frac{q-1}{q}<2$. Then we have from H\"{o}lder's inequality and \eqref{eq:spatial-L2} that
	\begin{align*}
		\|u\|_{L^{q'}(B_{1})}\leq& |E|^{\frac{1}{q'}-\frac{1}{2}}\|u\|_{L^{2}(B_{1})}\leq |E|^{\frac{1}{2}-\frac{1}{q}}\|u\|_{L^{2}(B_{1})}^{1-\vartheta}\|u\|_{L^{2}(B_{1})}^{\vartheta}\notag\\
		\leq&C(n) |E|^{\sigma_\vartheta}\|u\|_{L^2(B_1)}^{1-\vartheta}\|\nabla u\|_{L^p(B_1)}^{\vartheta}.
	\end{align*}

\end{proof}

For $a\geq1$ and $\xi\in\mathbb R^n$, define
\begin{equation}\label{eq:dissipation-density}
	\Phi_{p}(\xi,a)=
	\begin{cases}
		\displaystyle\frac{|\xi|^2}{a^{2-p}},&\text{for } p\in(\theta_{\mathrm{c}},2),\\
		|\xi|^p,&\text{for } p\in(2,\infty).
	\end{cases}
\end{equation}
Let $-1\leq r<s\leq0$, $\ell=s-r\leq1$, and suppose
\begin{align*}
	\begin{cases}	
		v\in C([r,s];L^2(B_1))\cap L^p((r,s);W^{1,p}(B_1)),\\
		\mathbf J\in L^{p'}(B_1\times(r,s);\mathbb R^n),\quad F\in L^{p'}((r,s);L^q(B_1)).
	\end{cases}
\end{align*}
Consider
\begin{equation}\label{eq:window-equation}
	\partial_\tau v-\mathrm{div}\mathbf J=F,\quad
	\mathbf{J}\cdot\nabla v\geq L\Phi_{p}(\nabla v,\mathfrak g).
\end{equation}
Here $L>0$, $\mathfrak g\geq1$ lies in $L^p(B_1\times(r,s))$ for $\theta_{\mathrm{c}}<p<2$, while $\mathfrak g\equiv1$ for $p>2$. Define
\begin{equation}\label{eq:window-data}
	\mathcal{K}_{F}=\|F\|_{L^{p'}((r,s);L^q(B_1))},\;\;	\Lambda=\begin{cases}
		\big(1+\|\mathfrak g\|_{L^p(B_1\times(r,s))}\big)^{\frac{2-p}{2}},&\text{if }p\in(\theta_{\mathrm{c}},2),\\
		1,&\text{if }p\in(2,\infty).
	\end{cases}
\end{equation}

\begin{lemma}\label{lem:window}
	Let $k_0\geq0$ and
	$|v(\cdot,r)|\leq k_0$. Either the weak equation of \eqref{eq:window-equation} holds for zero-trace spatial tests and $|\mathrm{Tr} v|\leq k_0$ on the lateral boundary, or it holds for unrestricted spatial tests with zero Neumann flux. For every $\tau\in[r,s]$, the Dirichlet case satisfies
	\begin{equation}\label{eq:window-D}
		\|v(\cdot,\tau)\|_{L^\infty(B_1)}
		\leq k_0+C\mathcal{K}_{F}^\mu\Lambda^{\mu\vartheta}\ell^\beta,
	\end{equation}
	and the homogeneous Neumann case satisfies
	\begin{equation}\label{eq:window-N}
		\|v(\cdot,\tau)\|_{L^\infty(B_1)}
		\leq k_0+C\big(\mathcal{K}_{F}\ell^{1/p}+\mathcal{K}_{F}^\mu\Lambda^{\mu\vartheta}\ell^\beta\big),
	\end{equation}
	where $\vartheta,\beta,\mu$ and $\mathcal{K}_{F},\Lambda$ are, respectively, defined in \eqref{eq:theta}--\eqref{eq:unified-exponents}, \eqref{E86} and \eqref{eq:window-data}, and the constant $C$ depends only on $n,p,q,\vartheta,L$.
\end{lemma}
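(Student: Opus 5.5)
The plan is a De Giorgi iteration on the levels $k_j = k_0 + K(1-2^{-j})$, $j\geq0$, applied to $(v-k_j)_+$; the case of $(v+k_j)_-$ is symmetric. The level $K$ will be chosen at the end of order $\mathcal{K}_F^\mu\Lambda^{\mu\vartheta}\ell^\beta$.

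\textbf{Energy step.} Because $|v(\cdot,r)|\leq k_0\leq k_j$, the truncation $w_j=(v-k_j)_+$ vanishes at $\tau=r$. In the Dirichlet case $w_j$ also has zero lateral trace, so after Steklov averaging it is an admissible test function; in the Neumann case every test is allowed. The structure condition $\mathbf J\cdot\nabla v\geq L\Phi_p(\nabla v,\mathfrak g)$ then gives
\begin{equation*}
\sup_{\tau\in(r,s)}\|w_j(\tau)\|_{L^2}^2+L\int_r^s\!\!\int_{B_1}\Phi_p(\nabla w_j,\mathfrak g)\leq 2\int_r^s\!\!\int_{B_1}|F|\,w_j .
\end{equation*}
Set $E_j(\tau)=\{w_j(\tau)>0\}$. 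The right-hand side is bounded by $\int_r^s\|F\|_{L^q}\|w_j\|_{L^{q'}}d\tau$, and Lemma \ref{lem:spatial} bounds $\|w_j\|_{L^{q'}}$ by $C|E_j|^{\sigma_\vartheta}\|w_j\|_{L^2}^{1-\vartheta}\|\nabla w_j\|_{L^p}^{\vartheta}$.

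\textbf{Hypotheses of Lemma \ref{lem:spatial}.} In the Dirichlet case $w_j\in W^{1,p}_0$, so the lemma applies directly. In the Neumann case we need $|E_j|\leq|B_1|/4$. To get it, I would first show by a cruder energy bound that once $K\gtrsim\mathcal{K}_F\ell^{1/p}$, the $L^2$ mass of $w_0$ is small enough that Chebyshev gives the measure condition at every time. This is where the extra term $\mathcal{K}_F\ell^{1/p}$ in \eqref{eq:window-N} comes from: it is a Poincaré-type loss caused by the mean value.

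\textbf{Gradient conversion.} For $p>2$, $\Phi_p=|\nabla w|^p$ directly. For $p<2$, Hölder's inequality gives $\|\nabla w\|_{L^p}^p\leq(\int|\nabla w|^2\mathfrak g^{p-2})^{p/2}\|\mathfrak g\|_{L^p}^{p(2-p)/2}$, which is where $\Lambda$ enters.

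\textbf{Closing the recursion.} Let $Y_j=\sup_\tau\|w_j\|_2^2+\int\Phi_p$. Young's inequality and Hölder's inequality in time ($F\in L^{p'}_t$, and $\|\nabla w_j\|_{L^p}^\vartheta$ lies in $L^{p/\vartheta}_t$) yield
\begin{equation*}
Y_j\leq C\,\mathcal{K}_F\,\Lambda^{\vartheta}\,\sup_\tau|E_j|^{\sigma_\vartheta}\,\ell^{\text{(some power)}}\,Y_j^{\frac{1-\vartheta}{2}+\frac{\vartheta}{p}\cdots}.
\end{equation*}
The measure factor then has to be converted into a factor $Y_{j-1}$ gained through the level jump. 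For this I would use Chebyshev in the form $|E_j(\tau)|\leq (2^{j}/K)^2\|w_{j-1}(\tau)\|_2^2$, and also Chebyshev in $L^{p\tilde\chi}$ or $L^{p\chi}$ via Lemma \ref{lem001} to get space-time control. The condition $q\vartheta>q_{\mathrm c}$ is exactly what makes the total exponent of $Y_{j-1}$ exceed $1$: the support gain $\sigma_\vartheta>0$ beats the loss. This produces a recursion $Y_j\leq C b^j K^{-a}(\text{data})\,Y_{j-1}^{1+\epsilon}$. By the fast geometric convergence lemma, $Y_j\to0$ provided $Y_0$ is small relative to $K$. Since $Y_0$ is itself controlled by the same energy bound in terms of $\mathcal{K}_F,\ell$, balancing the homogeneities fixes $K=C\mathcal{K}_F^\mu\Lambda^{\mu\vartheta}\ell^\beta$. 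The exponents $\mu=p/(p+\vartheta(p-2)_+)$ and $\beta=(1-\vartheta)/(p+\vartheta(p-2)_+)$ come out of this scaling, and I would double-check them by a dimensional analysis under $\tau\mapsto\lambda\tau$.

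\textbf{Main obstacle.} The hardest step is making the exponent bookkeeping close, that is, obtaining genuinely superlinear dependence on $Y_{j-1}$ with an $\ell$-power matching $\beta$. I would try first the purely slice-wise approach: take the sup in time of the measure, using $|E_j(\tau)|\leq C4^jK^{-2}Y_{j-1}$. This gives the factor $(K^{-2}Y_{j-1})^{\sigma_\vartheta}$ multiplied by $Y_j^{(1-\vartheta)/2+\vartheta/p'}$-type powers, which I would absorb using $Y_j\leq Y_{j-1}$. Superlinearity then requires $\sigma_\vartheta$ to exceed the deficit $1-\frac{1-\vartheta}{2}-\frac{\vartheta}{p}$, and this reduces to $\vartheta>q_{\mathrm c}/q$.
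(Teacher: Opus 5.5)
Your proposal is correct and follows essentially the paper's route: the truncated energy identity, Lemma~\ref{lem:spatial} applied slice-wise with the time-supremum of the level-set measure, absorption of the sublinear energy power, a Chebyshev level jump, and, in the Neumann case, an $L^2$ seed bound that raises the base level by $C\mathcal{K}_{F}\ell^{1/p}$. The only structural difference is cosmetic: the paper iterates the measures $Y(k)=\sup_{\tau}|\{v(\cdot,\tau)>k\}|$, whose starting value is trivially at most $|B_1|$, rather than the energies, which is equivalent.

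One bookkeeping slip needs fixing for $\theta_{\mathrm{c}}<p<2$. There your own gradient conversion gives $\|\nabla w\|_{L^p}\leq\Lambda\mathcal{E}^{1/2}$, so the energy exponent is $\frac{1-\vartheta}{2}+\frac{\vartheta}{2}=\frac12$ and the deficit is $\frac12$, not $1-\frac{1-\vartheta}{2}-\frac{\vartheta}{p}$. With $\vartheta/p$ the condition would come out as $\vartheta>n/q$, which is not sufficient; only the corrected deficit reduces superlinearity to $\vartheta>q_{\mathrm{c}}/q$.
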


\begin{proof}
	\noindent{\bf Step 1.} For a constant $k\geq k_0$, set $v_{k}=(v-k)_{+}$. Using a standard Steklov averaging argument to justify the choice of $v_{k}$ as a test function in
	\eqref{eq:window-equation}, we obtain that for $\tau\in[r,s]$,
	\begin{equation}\label{eq:exact-energy}
		\frac{1}{2}\|v_k(\cdot,\tau)\|_{L^{2}(B_{1})}^2+\int_{r}^{\tau}\int_{B_1}\mathbf J\cdot\nabla v_k dxd\tau=\int_{r}^{\tau}\int_{B_1}Fv_k dxd\tau.
	\end{equation}
	Define
	\begin{equation*}
		\mathcal{E}_{k}=
		\operatorname*{ess\,sup}_{r<\tau<s}\|v_{k}(\cdot,\tau)\|_{L^{2}(B_{1})}^2
		+\int_r^s\int_{B_1}\Phi_{p}(\nabla v_{k},\mathfrak g)dxd\tau,
		\end{equation*}
and
\begin{equation*}		
		\mathcal{S}_k=\int_r^s\int_{B_1}|F|v_{k}dxd\tau.
\end{equation*}
	Combining \eqref{eq:window-equation} and \eqref{eq:exact-energy}, it follows from H\"{o}lder's inequality that
	\begin{equation}\label{eq:energy-source}
		\mathcal{E}_{k}\leq(2+L^{-1})\mathcal{S}_{k}\leq (2+L^{-1})\mathcal{K}_{F}\|v_{k}\|_{L^p((r,s);L^{q'}(B_1))},
	\end{equation}
	where $\mathcal{K}_{H}$ is defined by \eqref{eq:window-data}.\ When $\theta_{\mathrm{c}}<p<2$, using H\"older's inequality, we obtain
	\begin{align*}
		\int_r^s\int_{B_1}|\nabla v_{k}|^pdxd\tau=&\int_r^s\int_{B_1}\bigg(\frac{|\nabla v_{k}|^2}{\mathfrak g^{2-p}}\bigg)^{p/2}
		\mathfrak g^{p(2-p)/2}dxd\tau\\
		\leq&\mathcal{E}_{k}^{p/2}\left(\int_r^s\int_{B_1}\mathfrak g^pdxd\tau\right)^{(2-p)/2}.
	\end{align*}
Then it follows that $\|\nabla v_{k}\|_{L^p(B_{1}\times(r,s))}\leq\Lambda\mathcal{E}_{k}^{1/2}$.\ When $p>2$, we directly have $\|\nabla v_{k}\|_{L^p}\leq\mathcal{E}_{k}^{1/p}$. Thus, both cases become
	\begin{equation}\label{eq:unified-gradient}
		\|\nabla v_{k}\|_{L^p(B_1\times(r,s))}\leq\Lambda\mathcal{E}_{k}^{1/p_\sharp},\quad p_\sharp=\max\{p,2\}.
	\end{equation}
	
	\noindent{\bf Step 2.} For the Dirichlet case, every $v_{k}$ already has zero trace. For the Neumann case, we need to arrange a large zero set before applying Lemma \ref{lem:spatial}.\ For $\tau\in[r,s]$, denote $Y_0(\tau)=\|v_{k_{0}}(\cdot,\tau)\|_{L^{2}(B_{1})}^2$.\ From \eqref{eq:exact-energy} and nonnegative dissipation, we deduce
	\begin{align*}
		Y_0'(\tau)\leq2\|F(\cdot,\tau)\|_2\sqrt{Y_0(\tau)},\quad Y_0(r)=0.
	\end{align*}
	For $\epsilon>0$, dividing by $2\sqrt{Y_0+\epsilon}$, integrating, and then letting $\epsilon\rightarrow0$, we obtain
	\begin{align}\label{eq:L2-seed}
		\|v_{k_{0}}(\cdot,\tau)\|_{L^{2}(B_{1})}\leq&\int_r^s\|F(\cdot,t)\|_{L^{2}(B_{1})}dt\leq|B_1|^{\frac{1}{2}-\frac{1}{q}} \int_r^s\|F(\cdot,t)\|_{L^{q}(B_{1})}dt\notag\\
		\leq& |B_1|^{1/2-1/q}\mathcal{K}_{F}\ell^{1/p}.
	\end{align}
	Let
	\begin{equation*}
		k_{\mathrm b}=
		\begin{cases}
			k_0,&\text{for the Dirichlet case},\\
			k_0+2|B_1|^{-1/q}\mathcal{K}_{F}\ell^{1/p},&\text{for homogeneous Neumann case}.
		\end{cases}
	\end{equation*}
	For the Neumann case and every $k\geq k_{\mathrm b}$, Chebyshev's inequality and \eqref{eq:L2-seed} imply
	\begin{equation*}
		|\{v(\cdot,\tau)>k\}|
		\leq
		\frac{|B_1|^{1-2/q}\mathcal{K}_{F}^2\ell^{2/p}}
		{\bigl(2|B_1|^{-1/q}\mathcal{K}_{F}\ell^{1/p}\bigr)^2}
		=\frac{|B_1|}{4}.
	\end{equation*}
	Hence Lemma~\ref{lem:spatial} applies at every subsequent level in either boundary case. 
	
	For $k\geq k_{\mathrm b}$, denote
	\begin{equation*}
		Y(k)=\operatorname*{ess\,sup}_{r<\tau<s}|\{x\in B_1:v(x,\tau)>k\}|.
	\end{equation*}
Then Lemma \ref{lem:spatial} gives that for $\tau\in[r,s],$
	\begin{align*}
		\|v_{k}(\cdot,\tau)\|_{L^{q'}(B_{1})}\leq C Y(k)^{\sigma_\vartheta}\|v_{k}(\cdot,\tau)\|_{L^{2}(B_{1})}^{1-\vartheta}\|\nabla v_{k}(\cdot,\tau)\|_{L^{p}(B_{1})}^\vartheta,
	\end{align*}
	where $\sigma_{\vartheta}$ is defined by \eqref{eq:spatial-gap}.\ It then follows from H\"{o}lder's inequality and \eqref{eq:unified-gradient} that
	\begin{align}\label{eq:mixed-calculation}
		\|v_{k}\|_{L^p((r,s);L^{q'}(B_1))}
		\leq& C Y(k)^{\sigma_\vartheta}
		\mathcal{E}_{k}^{(1-\vartheta)/2}
		\left(\int_r^s\|\nabla v_{k}(\cdot,\tau)\|_{L^{p}(B_{1})}^{p\vartheta}d\tau\right)^{1/p}
		\notag\\
		\leq& C Y(k)^{\sigma_\vartheta}
		\mathcal{E}_{k}^{(1-\vartheta)/2}\ell^{(1-\vartheta)/p}
		\left(\int_r^s\|\nabla v_{k}(\cdot,\tau)\|_{L^{p}(B_{1})}^p d\tau\right)^{\vartheta/p}\notag\\
		\leq& C\Lambda^\vartheta\ell^{b_\vartheta}
		Y(k)^{\sigma_\vartheta}\mathcal{E}_{k}^{e_{p,\vartheta}},
	\end{align}
	where
	\begin{equation*}
		b_\vartheta=\frac{1-\vartheta}{p},\quad
		e_{p,\vartheta}=\frac{1-\vartheta}{2}+\frac{\vartheta}{p_\sharp}
		=\frac{1}{2}-\frac{\vartheta(p-2)_+}{2p}\in(0,1).
	\end{equation*}
Here $p_{\sharp}$ is given by \eqref{eq:unified-gradient}.	Substituting \eqref{eq:mixed-calculation} into \eqref{eq:energy-source}, it follows that
	\begin{align*}
		\mathcal{E}_{k}\leq C\mathcal{K}_{F}\Lambda^\vartheta\ell^{b_\vartheta}
		Y(k)^{\sigma_\vartheta}\mathcal{E}_{k}^{e_{p,\vartheta}}.
	\end{align*}
	Note that
	\begin{align*}
		&1-e_{p,\vartheta}=\frac{p+\vartheta(p-2)_+}{2p},\quad
		\frac{b_\vartheta}{1-e_{p,\vartheta}}=\frac{1-\vartheta}{p}\frac{2p}{p+\vartheta(p-2)_{+}}=2\beta,\\
		&\sigma_\vartheta-(1-e_{p,\vartheta})=\vartheta\left(d_{n,p}-\frac{(p-2)_+}{2p}\right)-\frac{1}{q}
		=\frac{\vartheta}{q_{\mathrm{c}}}-\frac{1}{q},\quad\frac{1}{1-e_{p,\vartheta}}=2\mu.
	\end{align*}
	Define
	\begin{equation*}
		\kappa=2\mu\left(\frac{\vartheta}{q_{\mathrm{c}}}-\frac{1}{q}\right)>0.
	\end{equation*}
	Therefore, we have
	\begin{equation}\label{eq:common-level-bound}
		\mathcal{E}_{k}\leq C_*
		\mathcal{K}_{F}^{2\mu}\Lambda^{2\mu\vartheta}\ell^{2\beta}
		Y(k)^{1+\kappa},
	\end{equation}
	where $C_*>0$ depends only on $n,p,q,\vartheta,L$.
	
	\noindent{\bf Step 3.} For $j>k\geq k_{\mathrm b}$, one has $v_{k}\geq j-k$ on $\{v>j\}$. Hence, we have
	\begin{align*}
		(j-k)^2Y(j)\leq
		\operatorname*{ess\,sup}_{r<\tau<s}\|v_{k}(\cdot,\tau)\|_2^2
		\leq\mathcal{E}_{k},
	\end{align*}
	which, combined with \eqref{eq:common-level-bound}, leads to that 
	\begin{equation*}
		Y(j)\leq\frac{C_*\mathcal{G}^2}{(j-k)^2}Y(k)^{1+\kappa},\quad \mathcal{G}:=\mathcal{K}_{F}^\mu\Lambda^{\mu\vartheta}\ell^\beta.
	\end{equation*}
	Choose
	\begin{equation*}
		\mathcal{M}=2C_*^{1/2}4^{1/(2\kappa)}|B_{1}|^{\kappa/2}\mathcal{G},\quad
		k_{i}=k_{\mathrm b}+\mathcal{M}(1-2^{-i}),\quad i\geq0.
	\end{equation*}
	Then $k_{i+1}-k_{i}=\mathcal{M}2^{-i-1}$. Writing $Z_{i}=Y(k_{i})/|B_{1}|$, we obtain
	\begin{align*}
		Z_{i+1}
		&\leq\frac{4C_*\mathcal{G}^2|B_{1}|^\kappa}{\mathcal{M}^2}4^{i}Z_{i}^{1+\kappa}=4^{-1/\kappa}4^{i}Z_{i}^{1+\kappa}.
	\end{align*}
	The induction $Z_{i}\leq4^{-i/\kappa}$ holds, since
	\begin{align*}
		4^{-1/\kappa}4^{i}\big(4^{-i/\kappa}\big)^{1+\kappa}=4^{-(i+1)/\kappa}.
	\end{align*}
	Thus $Y(k_{i})\to0$, and $v\leq k_{\mathrm b}+\mathcal{M}$. Applying the same argument to $-v$ with flux $-\mathbf J$ and source $-F$, it follows that $|v|\leq k_{\mathrm b}+\mathcal{M}$. This proves \eqref{eq:window-D}--\eqref{eq:window-N}.
	
\end{proof}

We next give the proof of Theorem \ref{thm:unified} with the aid of Lemma \ref{lem:window}.
\begin{proof}[Proof of Theorem \ref{thm:unified}]
	For any $0<\delta\ll1$ and $-1+\delta\leq s<t\leq0,$ let 
	\begin{align*}
		h=t-s,\quad h\leq\ell \leq\frac{\delta}{2},\quad r=s-\ell,\quad \lambda=\Big(1+\frac{h}{\ell}\Big)^{\frac{1}{p-2}}.	
	\end{align*}	
	Since $0<\frac{h}{\ell}\leq1,$ it follows from a direct computation that
	\begin{align}\label{eq:lambda-bounds}
		\min\big\{1,2^{\frac{1}{p-2}}\big\}\leq\lambda\leq\max\big\{1,2^{\frac{1}{p-2}}\big\},\quad |\lambda-1|\leq C_{p}\frac{h}{\ell},	
	\end{align}	
	where
	\begin{align}\label{E13}
		C_{p}=\max\bigg\{\frac{1}{|p-2|},\big|2^{\frac{1}{p-2}}-1\big|\bigg\}\rightarrow\infty,\quad\text{as }p\rightarrow2.
	\end{align}
	
	For $\tau\in[r,s]$, write
	\begin{align*}
		T_{\lambda}(\tau)=r+\lambda^{p-2}(\tau-r),\quad\hat{u}(x,\tau)=\lambda u(x,T_{\lambda}(\tau)).
	\end{align*}
	Then we have
	\begin{align}\label{E10}
		T_{\lambda}(r)=r,\quad T_{\lambda}(s)=t,\quad\hat{u}(x,s)=\lambda u(x,t),\quad	\hat{u}(x,r)=\lambda u(x,r).
	\end{align}	
	Note that $\hat{u}$ solves
	\begin{equation*}
		\begin{cases}
			\partial_{\tau}\hat{u}-\mathrm{div}\big(\mathbf{a}(x,\nabla \hat{u})\big)=\lambda^{p-1}b(x,T_{\lambda}(\tau),\lambda^{-1}\hat{u}),&\text{in }B_{1}\times(r,s],\\
			\hat{u}=\lambda g(x)\;\,\text{or}\;\,
			\mathbf{a}(x,\nabla \hat{u})\cdot\nu=0,&\text{on }\partial  B_{1}\times(r,s],\\
			\hat{u}(x,r)=\lambda u(x,r),&\text{in } B_{1}.
		\end{cases}
	\end{equation*}
	Denote
	\begin{align*}
		\mathbf J_\lambda=\mathbf{a}(x,\nabla \hat{u})-\mathbf{a}(x,\nabla u),\quad F_{\lambda}(x,\tau)=\lambda^{p-1}b(x,T_{\lambda}(\tau),\lambda^{-1}\hat{u})-b(x,\tau,u(x,\tau)).	
	\end{align*}	
	Then $v(x,\tau):=\hat{u}(x,\tau)-u(x,\tau)$ satisfies
	\begin{equation*}
		\begin{cases}
			\partial_{\tau}v-\mathrm{div}\mathbf{J}_\lambda=F_{\lambda}(x,\tau),&\text{in }B_{1}\times(r,s],\\
			v=(\lambda-1)g(x)\;\,\text{or}\;\,
			\mathbf{J}_\lambda\cdot\nu=0,&\text{on }\partial  B_{1}\times(r,s],\\
			v(x,r)=(\lambda-1) u(x,r),&\text{in } B_{1}.
		\end{cases}
	\end{equation*}
	From \eqref{E32} and \eqref{eq:lambda-bounds}, we obtain
	\begin{align*}
		\mathbf{J}_{\lambda}\cdot\nabla v\geq L_{3}\Phi_{p}(\nabla v,\mathfrak{g}_{\lambda}),\quad\mathcal{K}_{F_\lambda}=\|F_{\lambda}\|_{L^{p'}((r,s);L^q(B_1))}\leq C(p)\mathcal{Q}_{3},
	\end{align*}	
	where $\Phi_{p}$ is defined in \eqref{eq:dissipation-density}, and
	\begin{align*}
		\mathfrak g_\lambda=1+|\nabla\hat{u}|+|\nabla u|,\quad \text{for }\theta_{\mathrm{c}}<p<2.
	\end{align*}
	Using \eqref{eq:lambda-bounds} again, we deduce
	\begin{align*}
		\int_r^s\int_{B_1}\mathfrak g_\lambda^pdxd\tau
		&\leq3^{p-1}\big(|B_1|\ell+(1+\lambda^2)\mathcal{Q}_{2}^p\big)\leq C(n,p)(1+\mathcal{Q}_{2})^p.
	\end{align*}
	Since $\mu=1$ for $\theta_{\mathrm{c}}<p<2$, we have
	\begin{align*}
		\Lambda^{\mu\vartheta}\leq C(n,p,\vartheta)(1+\mathcal{Q}_{2})^{\vartheta(2-p)/2}.
	\end{align*}
	Therefore, we obtain that for all $p\in(\theta_{\mathrm{c}},2)\cup(2,\infty),$
	\begin{equation*}
		\mathcal K_{F_\lambda}^{\mu}\Lambda^{\mu\vartheta}
		\leq C\mathcal{Q}_{3}^{\mu}(1+\mathcal{Q}_{2})^\gamma.
	\end{equation*}
	Denote
	\begin{equation*}
		\mathcal{A}=\mathcal{Q}_{3}^{\mu}(1+\mathcal{Q}_{2})^\gamma+\mathcal{Q}_{5}.
	\end{equation*}
	Then it follows from Lemma \ref{lem:window} that
	\begin{equation}\label{eq:z-control}
		\|v(\cdot,s)\|_{L^{\infty}(B_{1})}\leq C\big((\mathcal{Q}_{1}+\mathcal{Q}_{4})h\ell^{-1}+\mathcal{A}\ell^\beta\big).
	\end{equation}
	Combining \eqref{eq:lambda-bounds} and \eqref{E10}--\eqref{eq:z-control}, we have
	\begin{align*}
		\|u(\cdot,t)-u(\cdot,s)\|_{L^\infty(B_1)}\leq &\lambda^{-1}\|v(\cdot,s)\|_{L^{\infty}(B_{1})}+\lambda^{-1}|1-\lambda|\|u(\cdot,s)\|_{L^{\infty}(B_{1})}\notag\\
		\leq& C\big((\mathcal{Q}_{1}+\mathcal{Q}_{4})h\ell^{-1}+\mathcal{A}\ell^\beta\big).
	\end{align*}
	Therefore, on one hand, if $0<h\leq\big(\frac{\delta}{2})^{1+\beta}$, by taking $\ell=h^{\frac{1}{1+\beta}},$ we have
	\begin{align*}
		\|u(\cdot,t)-u(\cdot,s)\|_{L^{\infty}(B_{1})}\leq C\max\big\{\mathcal{Q}_{1}+\mathcal{Q}_{4},\mathcal{A}\big\}|t-s|^{\frac{\beta}{1+\beta}}.	
	\end{align*}	
	On the other hand, if $h>\big(\frac{\delta}{2})^{1+\beta}$, we have
	\begin{align*}
		\|u(\cdot,t)-u(\cdot,s)\|_{L^{\infty}(B_{1})}\leq C\max\big\{\mathcal{Q}_{1}+\mathcal{Q}_{4},\mathcal{A}\big\}(2/\delta)^{\beta}|t-s|^{\frac{\beta}{1+\beta}}.	
	\end{align*}	
	This implies that Theorem \ref{thm:unified} holds.

\end{proof}

We are now ready to prove Theorem \ref{thm10}.
\begin{proof}[Proof of Theorem \ref{thm10}]
It follows from Lemma \ref{LEMZ01}, Theorem \ref{thm03} and Theorem \ref{thm:unified} that Theorem \ref{thm10} holds.

\end{proof}

\section{Counterexamples to temporal regularity}\label{SEC005}

We first prove Proposition \ref{prop:critical}.
\begin{proof}[Proof of Proposition \ref{prop:critical}]
	\noindent{\bf Step 1.}\	\emph{A compactly supported profile with continuous flux.}
	Let
	\begin{equation*}
		\Phi(y)=\big(1-|y|^{p'}\big)_+^{p'+1},\quad \text{for }y\in\mathbb{R}^{n},
	\end{equation*}
	where $r_+=\max\{r,0\}$ and $p'=\frac{p}{p-1}$.\ Then we have $\Phi\in C_{0}^1(\mathbb R^n)$,
	$0\leq\Phi\leq1$, $\Phi(0)=1$, and
	\begin{align*}
		\nabla\Phi(y)
		&=-p'(p'+1)|y|^{p'-2}(1-|y|^{p'})^{p'}y,\quad\text{for } 0<|y|<1,
	\end{align*}
	which gives that
	\begin{equation}\label{eq:profile-flux}
		|\nabla\Phi(y)|^{p-2}\nabla\Phi(y)
		=-c_p\big(1-|y|^{p'}\big)_+^p y,\quad  c_p=[p'(p'+1)]^{p-1}.
	\end{equation}
	The flux at zero gradient is defined to be zero and the right-hand side of \eqref{eq:profile-flux} is $C^1$ in $\mathbb R^n$. In particular, no surface measure occurs on $|y|=1$. Its divergence is the continuous bounded function
	\begin{equation}\label{eq:G}
		G(y):=\mathrm{div}\big(|\nabla\Phi|^{p-2}\nabla\Phi\big)
		=-c_p\big(
		n(1-|y|^{p'})_+^p
		-pp'|y|^{p'}(1-|y|^{p'})_+^{p-1}
		\big).
	\end{equation}
	Here both terms are zero when $|y|\geq1$, and $G(0)=-nc_p$.
	
\smallskip
	\noindent{\bf Step 2.}\	\emph{The explicit solution and boundary data.}
	For $0<\tau\leq\frac{1}{2}$, define
	\begin{equation}\label{eq:scales}
		\Lambda(\tau)=\log\frac{e}{\tau},\quad
		a(\tau)=\Lambda(\tau)^{-1},\quad
		r(\tau)=\big(\tau a(\tau)^{p-2}\big)^{1/p},
	\end{equation}
where $e$ is the base of the natural logarithm.\ Thus $a(\tau)\to0$ and $r(\tau)\to0$ as $\tau\rightarrow0$. For simplicity, denote $t_{0}=-\frac{1}{2}$. For $x\in\overline{B}_{1}$, let
	\begin{equation*}
		u(x,t)=
		\begin{cases}
			1,&-1\leq t\leq t_0,\\
			1+a(t-t_0)\Phi\big(\frac{x}{r(t-t_0)}\big),
			&\;\,t_0<t\leq0.
		\end{cases}
	\end{equation*}
	In view of $p>1$, we deduce
	\begin{align*}
		r(\tau)^p
		&=\tau\Lambda(\tau)^{2-p}
		\leq\tau\Lambda(\tau)
		\leq\frac{\log(2e)}{2}<1,
	\end{align*}
where we also used the fact of $\frac{d}{d\tau}[\tau\log(e/\tau)]=\log(1/\tau)>0$. Consequently, the perturbation is supported in the fixed ball $\overline{B}_{r_*}$, where $r_*=[\log(2e)/2]^{1/p}<1$. It is identically zero in a fixed neighborhood of $\partial B_1$.\ Then we have $u=1$ and $\nabla u=0$ there, which verifies both alternative boundary conditions.\ The initial datum is the constant $1$, so it has Dirichlet trace $1$ and zero normal flux. Observe that 
\begin{equation}\label{eq:uniform-continuity}
		\|u(\cdot,t_0+\tau)-1\|_{L^\infty(B_1)}=a(\tau)\longrightarrow0,\quad\text{as }\tau\rightarrow0.
\end{equation}
This proves joint continuity on the slice $t=t_0$. Hence $u\in C(\overline{B}_{1}\times[-1,0])$.
	
\smallskip
\noindent{\bf Step 3.}\	\emph{The source and the exact scaling identities.}
	For $t>t_0$, write $\tau=t-t_0$ and $y=x/r(\tau)$. Differentiating
	\eqref{eq:scales}, we obtain
	\begin{equation*}
		a'(\tau)=\frac{a(\tau)^2}{\tau},\quad
		\frac{r'(\tau)}{r(\tau)}
		=\frac{1+(p-2)a(\tau)}{p\tau}.
	\end{equation*}
	Then we have
	\begin{align}\label{eq:time-derivative}
		\partial_tu(x,t)
		&=a'(\tau)\Phi(y)-a(\tau)\frac{r'(\tau)}{r(\tau)}y\cdot\nabla\Phi(y)
		\notag\\
		&=\frac{a(\tau)}{\tau}\Big(a(\tau)\Phi(y)-\frac{1+(p-2)a(\tau)}{p}y\cdot\nabla\Phi(y)\Big),
	\end{align}
	and
	\begin{align*}
		\nabla u(x,t)=\frac{a(\tau)}{r(\tau)}\nabla\Phi(y),\quad
		\mathrm{div}\big(|\nabla u|^{p-2}\nabla u\big)(x,t)
		=\frac{a(\tau)^{p-1}}{r(\tau)^p}G(y)
		=\frac{a(\tau)}{\tau}G(y),
	\end{align*}
	where $G$ is given by \eqref{eq:G}. For $0\leq a\leq a_*:=1/\log(2e)$, define
	\begin{equation}\label{eq:source-profile}
		\mathcal R_a(y)
		=a\Phi(y)-\frac{1+(p-2)a}{p}y\cdot\nabla\Phi(y)-G(y),
	\end{equation}
	and
	\begin{equation}\label{eq:source}
		F(x,t)=
		\begin{cases}
			0,&-1\leq t\leq t_0,\\
			\displaystyle
			\frac{a(t-t_0)}{t-t_0}
			\mathcal R_{a(t-t_0)}\!\Big(\frac{x}{r(t-t_0)}\Big),
			&\;\, t_0<t\leq0.
		\end{cases}
	\end{equation}
	Equations \eqref{eq:time-derivative}--\eqref{eq:source} verify the pointwise equation for $t>t_0$ and it is also immediate for $t<t_0$. The functions $\mathcal R_a$ are uniformly bounded, continuous, and supported in $\overline{B}_{1}$. The source is time dependent and unbounded. In fact, at the center, we have
	\begin{equation*}
		F(0,t_0+\tau)
		=\frac{a(\tau)}{\tau}\bigl(a(\tau)+nc_p\bigr)\longrightarrow\infty,\quad\text{as }\tau\rightarrow0.
	\end{equation*}
	Thus no continuity or boundedness of $F$ on the closed cylinder $\overline{B}_{1}\times[-1,0]$ is asserted.
	
\smallskip
\noindent{\bf Step 4.}\ \emph{Integrability and the weak formulation.}
	Since $r(\tau)<1$, a spatial change of variables in \eqref{eq:source}
	yields the exact identity
	\begin{equation}\label{eq:source-norm}
		\|F(\cdot,t_0+\tau)\|_{L^q(B_1)}
		=\frac{a(\tau)}{\tau}r(\tau)^{n/q}
		\|\mathcal R_{a(\tau)}\|_{L^q(\mathbb R^n)},
		\quad 1\leq q<\infty.
	\end{equation}
	At $q=n$, uniform boundedness and compact support of $\mathcal R_a$ give
	\begin{align*}
		\|F(\cdot,t_0+\tau)\|_{L^n(B_1)}^{p'}
		&\leq C\Big(\frac{a(\tau)}{\tau}r(\tau)\Big)^{p'}
		\leq C\tau^{-p'+p'/p}a(\tau)^{p'+(p-2)p'/p}
		\leq\frac{C}{\tau\Lambda(\tau)^2}.
	\end{align*}
	Substituting $v=\Lambda(\tau)$, we obtain
	\begin{equation*}
		\int_0^{1/2}\frac{d\tau}{\tau\Lambda(\tau)^2}
		=\int_{\log(2e)}^{\infty}\frac{dv}{v^2}
		=\frac{1}{\log(2e)}<\infty.
	\end{equation*}
	This proves $F\in L^{p'}((-1,0);L^n(B_1))$ for every $p>1$. The same calculation applied to \eqref{eq:time-derivative} shows that its right-hand side, extended by zero for $t\leq t_0$, also belongs to $L^{p'}((-1,0);L^n(B_1))$. 
	
	Set $C_\Phi=\int_{\mathbb R^n}|\nabla\Phi|^p\,dy<\infty$.
	The exact spatial energy is
	\begin{align*}
		\int_{B_1}|\nabla u(x,t_0+\tau)|^pdx=C_\Phi a(\tau)^p r(\tau)^{n-p}=C_\Phi\tau^{n/p-1}
		a(\tau)^{2+n(p-2)/p}.
	\end{align*}
	This is integrable in time, since for every $b\in\mathbb{R}$,
	\begin{align*}
		\int_0^{1/2}\tau^{n/p-1}\Lambda(\tau)^{-b}\,d\tau
		=e^{n/p}\int_{\log(2e)}^{\infty}e^{-(n/p)v}v^{-b}\,dv<\infty.
	\end{align*}
	Consequently, $u$ belongs to the asserted energy class.
	
	The $C^1$ flux in \eqref{eq:profile-flux} and the vanishing of $\Phi$ on its support boundary exclude singular moving-interface terms. Integrating by parts away from $t_0$ and passing to the limit using \eqref{eq:uniform-continuity}, it follows from the preceding integrability bounds that
	\begin{equation}\label{eq:weak}
		\int_{-1}^{0}\int_{B_1}\big(-u\partial_t\zeta+|\nabla u|^{p-2}\nabla u\cdot\nabla\zeta\big)dxdt=\int_{-1}^{0}\int_{B_1}F\zeta dxdt,
	\end{equation}
where $\zeta$ is smooth and compactly supported in time, with zero spatial trace in the Dirichlet case and no trace restriction in homogeneous Neumann case.\ The spatial boundary term vanishes because the flux is zero near $\partial B_1$.\ The same limiting argument identifies the expression in \eqref{eq:time-derivative}, extended by zero for $t\leq t_0$, as the weak time derivative. No jump term occurs at $t_0=-\frac{1}{2}$.
	
	Note that for every $p>1$,
	\begin{equation*}
		W^{1,p}(B_1)\hookrightarrow W^{1,1}(B_1)
		\hookrightarrow L^{n/(n-1)}(B_1).
	\end{equation*}
	The complete Sobolev norm is used, so no zero-mean or zero-trace condition is needed for this embedding. It then follows from H\"older's inequality that
	\begin{align*}
		\int_{-1}^{0}\int_{B_1}|F\zeta|\,dx\,dt
		&\leq \|F\|_{L^{p'}((-1,0);L^n(B_1))}
		\|\zeta\|_{L^p((-1,0);L^{n/(n-1)}(B_1))}\notag\\
		&\leq C(n,p)\|F\|_{L^{p'}((-1,0);L^n(B_1))}
		\|\zeta\|_{L^p((-1,0);W^{1,p}(B_1))}.
	\end{align*}
	Thus density extends \eqref{eq:weak} to time-regularized energy tests with spatial values in $W_0^{1,p}(B_1)\cap L^2(B_1)$ or $W^{1,p}(B_1)\cap L^2(B_1)$, respectively. Finally, $u\equiv1$ near $t=-1$ gives the strong $L^2(B_1)$ initial trace. Hence $u$ is an energy weak solution for all $p>1$.
	
\smallskip
\noindent{\bf Step 5.}\	\emph{Failure of any temporal H\"older exponent.}
	For $0<h<1/4$, choose $s_h=t_0+h$, $t_h=t_0+2h$, and set
	$L_h=\log(e/h)$. Since $\Phi(0)=1$, we have
	\begin{align*}
		u(0,t_h)-u(0,s_h)
		&=\frac{1}{L_h-\log2}-\frac{1}{L_h}
		=\frac{\log2}{L_h(L_h-\log2)}>0.
	\end{align*}
	For each fixed $h$, the difference of the two spatial slices is
	continuous in $x$, so its essential supremum is at least its value at
	$x=0$. Thus, for every fixed $\bar{\alpha}>0$, we have
	\begin{equation*}
		\frac{\|u(\cdot,t_h)-u(\cdot,s_h)\|_{L^\infty(B_1)}}{(t_h-s_h)^{\bar{\alpha}}}
		\geq\frac{\log2}{h^{\bar{\alpha}} L_h(L_h-\log2)}
		\longrightarrow\infty,
		\quad h\rightarrow0.
	\end{equation*}
	For any prescribed $0<\rho\leq\frac{1}{2}$, taking $2h<\rho$, we see that both times strictly inside $(t_0,t_0+\rho)$. This proves
	\eqref{eq:failure-claim}. 
	
\smallskip
\noindent{\bf Step 6.}\	\emph{The source is not in any higher spatial mixed class.}
	By \eqref{eq:source-profile}, we have $\mathcal R_a(0)=a+nc_p\geq nc_p$ for $0\leq a\leq a_*$.\ The continuous family $\mathcal R_a$ is uniformly continuous near $y=0$. Hence there exists $r_0\in(0,1)$, independent of $a$, such that $\mathcal R_a(y)\geq nc_p/2$ on $B_{r_0}$. For each $q\geq1$, we have
	\begin{align*}
		0<\bar{c}\leq\|\mathcal R_a\|_{L^q(\mathbb R^n)}\leq \overline{C}<\infty,
		\quad\text{for } 0\leq a\leq a_*,
	\end{align*}
	where the constants $\bar{c}$ and $\overline{C}$ depend on the fixed $n,p,q$, not on time. Combining this with \eqref{eq:source-norm}, it follows that
	\begin{equation*}
		\|F(\cdot,t_0+\tau)\|_{L^q(B_1)}^{p'}
		\sim
		\tau^{-1-\frac{1-n/q}{p-1}}
		\Lambda(\tau)^{-\frac{p+(p-2)n/q}{p-1}},
	\end{equation*}
	where $X\sim Y$ means $cY\leq X\leq CY$ for fixed positive constants $c$ and $C$. If $q>n$, denote $d=(1-n/q)/(p-1)>0$. For every $b\in\mathbb{R}$, we have
	\begin{align*}
		\int_0^{1/2}\tau^{-1-d}\Lambda(\tau)^{-b}d\tau=e^{-d}\int_{\log(2e)}^{\infty}e^{dv}v^{-b}dv=\infty.
	\end{align*}
	Thus $F\notin L^{p'}((-1,0);L^q(B_1))$ for every finite $q>n$.
\end{proof}

The preceding counterexample shows that forcing in $L^{p'}_tL^n_x$ need not guarantee any positive temporal H\"older exponent, even for continuous bounded energy solutions with compatible data.\ The slowly decaying amplitude $a(\tau)=1/\log(e/\tau)$ and the shrinking scale $r(\tau)^p=\tau a(\tau)^{p-2}$ preserve the mixed-norm bound while excluding every power-type temporal modulus in this space.\ The source in this construction is unbounded.

We next consider the stronger assumption of continuous bounded time-dependent forcing.\ The following construction shows that even under this assumption, the temporal Lipschitz endpoint in $L^\infty$ may fail, although the solutions retain temporal H\"older continuity in this space for every exponent below one.

\begin{proof}[Proof of Proposition \ref{prop:main}]
	\noindent{\bf Step 1.} Start with a time-independent positive reference function
	$V\in C^1(\overline{B}_{1})$ whose weak source
	\begin{align}\label{G0012}
		G=-\mathrm{div}(|\nabla V|^{p-2}\nabla V)
		\quad\text{belongs to }C(\overline{B}_{1}).
	\end{align}
Fix $x_0$ and a ball $U\Subset B_1$ about $x_0$ on which $V$ is smooth and $|\nabla V|\geq c_{\ast}>0$, where the symbol $\Subset$ denotes compact inclusion. Specific reference functions will be given in
	Step 4. Define 
	\begin{equation*}
		\begin{gathered}
			\mathcal{B}(\xi)=|\xi|^{p-2}I+(p-2)|\xi|^{p-4}\xi\otimes\xi,\quad \xi\neq0,\\
			 \mathcal{A}=\mathcal{B}(\nabla V(x_0)),\quad
			\operatorname{tr}\mathcal{A}
			=(n+p-2)|\nabla V(x_0)|^{p-2}>0.
		\end{gathered}
	\end{equation*}
	The eigenvalues of $\mathcal{B}(\xi)$ are $|\xi|^{p-2}$ and $(p-1)|\xi|^{p-2}$, so $\mathcal{A}$ is positive definite. Write
	\begin{equation}\label{eq:profile}
		\begin{gathered}
			y=x-x_0,\quad t_{0}=-\frac{1}{2},\quad S=|y|^4+(t-t_0)^2,\quad
			P(x,t)=t-t_0+\frac{|y|^2}{2\operatorname{tr}\mathcal{A}},\\
			H(s)=\sqrt{\log\frac{e^4}{s}},\quad W(x,t)=P(x,t)H(S)
			\quad(S>0),\quad W(x_0,t_0)=0,
		\end{gathered}
	\end{equation}
	where $e$ denotes the base of the natural logarithm.\ Only $x_0=0$ and $x_0=\frac{1}{2}e_1$ will be used.\ Here $e_{1}=(1,0,...,0).$\ On $\overline{B}_{1}\times[-1,0]$, we have $S\leq\frac{85}{16}<e^4$.\ So $H$ is well defined for every $S>0$ encountered here.\ Take $0\leq\chi,\eta\leq1$, with $\chi\in C_{0}^\infty(U)$ equal to one near $x_0$, and $\eta\in C_{0}^\infty((-\frac{3}{4},-\frac{1}{4}))$ equal to one near $t_0$. Set
	\begin{equation}\label{eq:u}
		w(x,t)=\chi(x)\eta(t)W(x,t),\quad u(x,t)=V(x)+\epsilon w(x,t).
	\end{equation}
	The parameter $\epsilon>0$ will be fixed sufficiently small and no limiting argument in $\epsilon$ is used to prove failure of Lipschitz continuity.
	
	For simplicity, write $\rho=S^{1/4}$. Near $(x_0,t_0)$, direct differentiation gives
	\begin{equation}\label{eq:bounds}
		|W|\leq C\rho^2H(S),\quad
		|\nabla W|\leq C\rho H(S),\quad
		|\nabla^{2}W|+|\partial_tW|\leq C H(S).
	\end{equation}
	In fact, $|P|\leq C S^{1/2}$, $|\nabla P|\leq C S^{1/4}$,
	$|\nabla S|\leq4S^{3/4}$, and $|\nabla^{2}S|\leq C S^{1/2}$. Moreover, we have
	\begin{equation*}
		H'(s)=-\frac{1}{2sH(s)},\quad
		H''(s)=\frac{2H(s)^2-1}{4s^2H(s)^3}.
	\end{equation*}
	These formulas imply \eqref{eq:bounds}.\ In particular, $w$ and its spatial gradient extend continuously across the exceptional point $(x_{0},t_{0})$.\ Pick $\epsilon$ so small that $u>0$ and $\epsilon\|\nabla w\|_{L^{\infty}}\leq\frac{c_{\ast}}{2}$ on $U\times[-1,0]$.
	
	\noindent{\bf Step 2.} For matrices, denote $A:B=\sum_{i,j}A_{ij}B_{ij}$. Let $\mathcal{L}_0=\mathcal{A}:\nabla^{2}$. From \eqref{eq:profile}, we have
	\begin{equation*}
		\partial_tP=1,\quad
		\nabla^{2}P=\frac{I}{\operatorname{tr}\mathcal{A}},\quad
		(\partial_t-\mathcal{L}_0)P=0.
	\end{equation*}
	This exact cancellation removes the unbounded factor $H(S)$ from the
	linear residual. Indeed, we have
	\begin{align}\label{eq:linear-residual}
		&(\partial_t-\mathcal{L}_0)W\notag\\
		&=P H'(S)(\partial_tS-\mathcal{L}_0S)
		-P H''(S)\nabla S\cdot\mathcal{A}\nabla S
		-2H'(S)\nabla P\cdot\mathcal{A}\nabla S.
	\end{align}
	Here $\partial_tS=2(t-t_0)$, $\nabla S=4|y|^2y$, and
	$\nabla^{2}S=8y\otimes y+4|y|^2I$. Thus each term in
	\eqref{eq:linear-residual} is bounded in absolute value by $C/H(S)$, and
	\begin{equation}\label{eq:linear-small}
		|(\partial_t-\mathcal{L}_0)W|\leq\frac{C}{H(S)}\longrightarrow0,
		\quad\text{as }(x,t)\to(x_0,t_0).
	\end{equation}
	
	For $(x,t)\neq(x_0,t_0)$, define $F=\partial_tu-\mathrm{div}_x(|\nabla u|^{p-2}\nabla u)$, and set $F(x_0,t_0)=G(x_{0})$, where $G$ is given by \eqref{G0012}. A direct calculation gives that
	\begin{align*}
		F(x,t)-G(x)
		&=\epsilon(\partial_tW-\mathcal{A}:\nabla^{2}W)
		-\epsilon[\mathcal{B}(\nabla V+\epsilon\nabla W)-\mathcal{A}]:\nabla^{2}W\notag\\
		&\quad-[\mathcal{B}(\nabla V+\epsilon\nabla W)-\mathcal{B}(\nabla V)]:\nabla^{2}V.
	\end{align*}
Note that
	\begin{align*}
		|\mathcal{B}(\nabla V+\epsilon\nabla W)-\mathcal{A}|\leq C(|y|+\epsilon|\nabla W|),\;\,
		|\mathcal{B}(\nabla V+\epsilon\nabla W)-\mathcal{B}(\nabla V)|\leq C\epsilon|\nabla W|.
	\end{align*}
Combining these bounds with \eqref{eq:bounds} and \eqref{eq:linear-small}, we have
	\begin{equation*}
		|F(x,t)-G(x)|\leq
		C\epsilon\Big(\frac{1}{H(S)}+\rho H(S)+\epsilon\rho H(S)^2\Big)
		\longrightarrow0,\quad\text{as }(x,t)\to(x_0,t_0),
	\end{equation*}
	where $\rho=S^{1/4}$ and $\epsilon>0$ is fixed. The perturbation is smooth at every point other than $(x_0,t_0)$ and is supported in a region where $|\nabla u|>0$.\ Outside its support, $F(x,t)=G(x)$.\ Hence $F\in C(\overline{B}_{1}\times[-1,0])$.\ At the distinguished spatial point there is even an exact local formula:
	\begin{equation*}
		F(x_0,t)=G(x_{0})-\frac{\epsilon}{H((t-t_0)^2)},
		\quad\text{for }0<|t-t_0|\text{ sufficiently small}.
	\end{equation*}
In particular, the source is genuinely time dependent.
	
	The weak equation has no hidden measure at $(x_0,t_0)$. One direct justification is to replace $H(S)$ by $H(S+\delta)$ for $0<\delta<1$ and let $\delta\rightarrow0$.\ The resulting perturbations and their spatial gradients converge uniformly.\ Their time derivatives and spatial second derivatives converge in every finite space--time $L^r$, by \eqref{eq:bounds} and dominated convergence.\ All derivative bounds are dominated by $C(1+H(S))$, whose every finite power is locally integrable.\ The flux differences are supported where $\nabla V$ stays away from zero, so passage in their weak formulations is legitimate.\ Thus $u\in C([-1,0];L^2(B_1))\cap L^p((-1,0);W^{1,p}(B_1))$ is an energy weak solution.
	
\noindent{\bf Step 3.} For $t\neq t_0$, it follows from monotonicity of $H$ and $S\geq(t-t_0)^2$ that
	\begin{equation}\label{eq:time-bound}
		\|\partial_tu(\cdot,t)\|_{L^\infty(B_1)}\leq C\big(1+H((t-t_0)^2)\big).
	\end{equation}
	Every finite power of the right-hand side is integrable in $t$.\ Integrating the time derivative, we obtain that for every $1<r<\infty$,
	\begin{align*}
		\|u(\cdot,t)-u(\cdot,s)\|_{L^\infty(B_1)}
		\leq \|\partial_tu\|_{L^r((-1,0);L^\infty(B_1))}
		|t-s|^{1-1/r}.
	\end{align*}
	The identity across $t_0$ follows by continuity and the integrable bound
	\eqref{eq:time-bound}. This proves every claimed H\"older exponent.
	On the other hand, we have
	\begin{equation*}
		\frac{|u(x_0,t_0+2h)-u(x_0,t_0+h)|}{h}=\epsilon\bigg(2\sqrt{\log\frac{e^4}{4h^2}}-\sqrt{\log\frac{e^4}{h^2}}\bigg)
		\longrightarrow\infty,\quad\text{as }h\rightarrow0.
	\end{equation*}
	Indeed, if $A_h=\log(e^4/h^2)$, the bracket is
	$\sqrt{A_h}-2\log4/(\sqrt{A_h-\log4}+\sqrt{A_h})$.
	Spatial continuity makes the $L^\infty$ norm at least the displayed
	interior point value. Both observation times lie strictly to the right
	of $t_0$, proving failure on every right neighborhood.
	
\noindent{\bf Step 4.} For the Dirichlet construction, we choose
	\begin{align*}
		V_{\scriptscriptstyle D}(x)=2+x_1,\quad x_0=0,\quad U=B_{\frac{1}{8}},
		\quad G=0,\quad \operatorname{tr}\mathcal{A}=n+p-2.
	\end{align*}
	Then \eqref{eq:u} reads explicitly
	\begin{equation*}
		u_{\scriptscriptstyle D}(x,t)=2+x_1+
		\epsilon\chi(x)\eta(t)
		\left(t+\frac{1}{2}+\frac{|x|^2}{2(n+p-2)}\right)
		\sqrt{\log\frac{e^4}{|x|^4+(t+\frac{1}{2})^2}},
	\end{equation*}
	with the perturbation defined to be zero at $(0,-\frac{1}{2})$.\ It vanishes near the lateral boundary and near $t=-1$. Thus both initial and boundary data are $2+x_1$. One can additionally ensure $\frac{1}{2}\leq|\nabla u_{\scriptscriptstyle D}|\leq\frac{3}{2}$ by shrinking the fixed $\epsilon$.
	
	For the Neumann construction, we pick 
	\begin{align*}
		V_{\scriptscriptstyle N}(x)=2+(1-|x|^{p'})^{p'},\quad
		x_0=\frac{1}{2}e_1,\quad U=B_{\frac{1}{8}}(x_0).
	\end{align*}
	This function is $C^1$ on the closed ball and smooth with nonzero gradient
	on $\overline U$. Its flux and source are explicitly
	\begin{align*}
		|\nabla V_{\scriptscriptstyle N}|^{p-2}\nabla V_{\scriptscriptstyle N}
		&=-(p')^{2(p-1)}(1-|x|^{p'})x,\quad G(x)=(p')^{2(p-1)}\big(n-(n+p')|x|^{p'}\big).
	\end{align*}
	The first expression vanishes on $\partial B_1$ and the second is continuous.\ Choose $\chi_{\scriptscriptstyle N}\in C_{0}^\infty(B_{1/8}(x_0))$,
	$0\leq\chi_{\scriptscriptstyle N}\leq1$, equal to one near $x_0$,
	and use the time cutoff $\eta$ from Step 1. The Neumann solution is explicitly
	\begin{equation*}
		\begin{aligned}
			&u_{\scriptscriptstyle N}(x,t)
			=2+(1-|x|^{p'})^{p'}\\
			&+\epsilon\chi_{\scriptscriptstyle N}(x)\eta(t)
			\left(
			t+\frac{1}{2}
			+\frac{|x-\frac{1}{2}e_1|^2}
			{2(n+p-2)|\nabla V_{\scriptscriptstyle N}(x_0)|^{p-2}}
			\right)\sqrt{\log\frac{e^4}
				{|x-\frac{1}{2}e_1|^4+(t+\frac{1}{2})^2}},
		\end{aligned}
	\end{equation*}
	where $|\nabla V_{\scriptscriptstyle N}(x_0)|=(p')^2 2^{1-p'}(1-2^{-p'})^{p'-1}$. The perturbation is defined to be zero at $(\frac{1}{2}e_1,-\frac{1}{2})$ and vanishes near $\partial B_1$ and $t=-1$. Hence $u_{\scriptscriptstyle N}(x,-1)=2+(1-|x|^{p'})^{p'}$ for $x\in B_1$. Near $\partial B_1$ the solution equals $V_{\scriptscriptstyle N}$, so its normal flux is zero. 
\end{proof}

\section{H\"{o}lder estimate in the supercritical singular case}\label{SEC006}
For $\theta_{\mathrm{c}}<p<2$, write
\begin{align*}
	\bar{\varepsilon}_{0}:=\frac{\varepsilon_{0}(2-p)}{p},
\end{align*}
where $\varepsilon_{0}$ is specified in \eqref{EXP01}. Since
\begin{align*}
2pl_{0}+n(p-2)-2(p(l_{0}-1)-n)=p(n+2)>0,\quad l_{0}>\frac{n+p}{p},	
\end{align*}	
we have
\begin{align}\label{E20}
0<\bar{\varepsilon}_{0}<\frac{2-p}{2}<\frac{1}{2}.	
\end{align}	
Note that for any $R\in(0,\frac{1}{4}]$, $x_{0}\in \overline{B}_{\frac{1}{2}}$, and $t_{0}\in[-\frac{1}{2},0]$,
\begin{align*}
	[(x_{0},t_{0})+Q(R^{1-\bar{\varepsilon}_{0}},R^{p})]\subset B_{1}\times\Big(-\frac{3}{4},0\Big]\subset Q(1,1).
\end{align*}
Due to the fact that $R\in(0,\frac{1}{4}]$ and $x_{0}\in \overline{B}_{\frac{1}{2}}$, it follows from \eqref{E20} that
\begin{align*}
	2^{-1}+R^{1-\bar{\varepsilon}_{0}}\leq2^{-1}+4^{-(1-\bar{\varepsilon}_{0})}\leq 1.
\end{align*}
This guarantees that the cylinder considered here lies inside $Q(1,1)$. Performing a suitable translation, we may assume that $(x_{0},t_{0})=(0,0)$. Set
\begin{align*}
	\mu^{+}=\sup\limits_{Q(R^{1-\bar{\varepsilon}_{0}},R^{p})}u,\quad\mu^{-}=\inf\limits_{Q(R^{1-\bar{\varepsilon}_{0}},R^{p})}u,
\end{align*}
and
\begin{align*}
	\omega=\mathop{osc}\limits_{Q(R^{1-\bar{\varepsilon}_{0}},R^{p})}u=\mu^{+}-\mu^{-}.
\end{align*}
Define the following cylinder:
\begin{align}\label{Z999}
	Q(a_{0}R,R^{p}),\quad a_{0}:=\Big(\frac{\sigma\omega}{A}\Big)^{\frac{p-2}{p}},
\end{align}
where $\sigma\in(0,1)$ and $A>1$ are constants to be determined later in terms of the data. The scaling exponent $\frac{p-2}{p}$ was previously identified in \cite{CD1988,D1993}.\ Denote
\begin{align}\label{Z20}
\mathcal{M}_{\ast}=
\mathcal{Q}_{4}+\|\phi\|_{L^{\infty}(B_{1})}+\overline{C}\|\psi\|_{L^{\frac{pl_{0}}{p-1}}(Q(1,1))}^{\frac{n+p}{(p-1)(n+2)}},
\end{align}	
where $\overline{C}$ is defined in Lemma \ref{LEMZ01} and $\mathcal{Q}_{4}$ is given by \eqref{Z21}. Moreover, we have
\begin{align}\label{Z23}
\mathcal{M}_{\ast}\geq2\|u\|_{L^{\infty}(Q(1,1))}+1.	
\end{align}	

Clearly, either $\omega\leq \sigma^{-1}AR^{\varepsilon_{0}}$ or $\omega>\sigma^{-1}AR^{\varepsilon_{0}}$. In the latter case, we have
\begin{align*}
	Q(a_{0}R,R^{p})\subset Q(R^{1-\bar{\varepsilon}_{0}},R^{p}),\quad \mathop{osc}\limits_{Q(a_{0}R,R^{p})}u\leq\omega.
\end{align*}
Denote
\begin{align}\label{M01}
	m:=\Big(\frac{\sigma\omega}{\mathcal{M}_{\ast}}\Big)^{\frac{p-2}{p}}\in [1,\infty).
\end{align}
If $A\geq\mathcal{M}_{\ast}$, we know $Q(mR,R^{p})\subset Q(a_{0}R,R^{p})$. 

Define
\begin{align}\label{AH26}
	\kappa_{0}:=-\frac{\ln(1-\sqrt{15}/4)}{\ln 2}>0.	
\end{align}	
We next establish the three key ingredients required for the parabolic De Giorgi truncation method.
\begin{lemma}\label{lem006}
With $\sigma$, $m$, and $\mathcal{M}_{\ast}$ as in \eqref{Z999}--\eqref{M01}, there exists a constant $\sigma_{0}\in(0,1]$, depending solely on the prescribed data and independent of $\sigma$, such that for all $0<\sigma<\sigma_{0}$,
	
	$(i)$ if
	\begin{align}\label{WMDN001}
		\frac{|B_{mR}\cap\{u(\cdot,-R^{p})>\mu^{+}-2^{-1}\omega\}|}{|B_{mR}|}\leq\frac{1}{2},
	\end{align}
	then one has either $\omega\leq \sigma^{-1}\mathcal{M}_{\ast}R^{\varepsilon_{0}}$, or
	\begin{align}\label{DQAF001}
		\frac{|B_{mR}\cap\{u(\cdot,t)>\mu^{+}-2^{-(\kappa_{0}+1)}\omega\}|}{|B_{mR}|}\leq\frac{4}{5},\quad\text{for all }t\in[-R^{p},0];
	\end{align}
	
	$(ii)$ if
	\begin{align*}
		\frac{|B_{mR}\cap\{u(\cdot,-R^{p})<\mu^{-}+2^{-1}\omega\}|}{|B_{mR}|}\leq\frac{1}{2},
	\end{align*}
	then one has either $\omega\leq \sigma^{-1}\mathcal{M}_{\ast}R^{\varepsilon_{0}}$, or
	\begin{align}\label{WZPMQ001}
		\frac{|B_{mR}\cap\{u(\cdot,t)<\mu^{-}+2^{-(\kappa_{0}+1)}\omega\}|}{|B_{mR}|}\leq\frac{4}{5},\quad\text{for all }t\in[-R^{p},0],
	\end{align}
	where $\kappa_{0}$ is specified in \eqref{AH26}.
	
\end{lemma}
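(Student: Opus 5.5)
The plan is to propagate the measure information at $t=-R^{p}$ forward in time directly, using the $L^\infty$-valued temporal H\"older estimate of Theorem \ref{thm:unified} (equivalently Theorem \ref{thm10}). No energy inequality is needed. This is the mechanism advertised in the introduction: once $u$ is H\"older continuous in time uniformly in space, superlevel sets at later times are contained in slightly lower superlevel sets at the initial time. I treat only $(i)$; $(ii)$ follows by the same argument applied to sublevel sets (or to $-u$, which solves an equation of the same structure with $\mathbf{a}$ replaced by $-\mathbf{a}(x,-\cdot)$ and $b$ by $-b$).

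\emph{Step 1.} The cylinder $Q(R^{1-\bar\varepsilon_0},R^p)$, after translation back, lies in $B_1\times(-\frac34,0]$. Hence every time in $[-R^{p},0]$ is at least $\frac14$ away from the initial time $-1$. Apply Theorem \ref{thm:unified} with $\delta=\frac14$, with $s=-R^{p}$ and any $t\in(-R^{p},0]$. This gives
\[
\|u(\cdot,t)-u(\cdot,-R^{p})\|_{L^\infty(B_1)}\le C_1\mathscr{C}\,4^{\beta}R^{p\alpha}=:C_2R^{p\alpha}.
\]
By Lemma \ref{LEMZ01}, Theorem \ref{thm03} and \eqref{Z20}--\eqref{Z23}, $\mathscr{C}$ is bounded in terms of the data. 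Moreover $\mathcal{M}_\ast\ge1$, so $C_2\le C_3\mathcal{M}_\ast$ with $C_3$ depending only on the data.

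\emph{Step 2.} By \eqref{AH26}, $2^{-\kappa_0}=1-\sqrt{15}/4$. The gap between the two levels is therefore
\[
\Big(\tfrac12-2^{-(\kappa_0+1)}\Big)\omega=\tfrac{\sqrt{15}}{8}\,\omega>0.
\]
If $C_2R^{p\alpha}\le\frac{\sqrt{15}}{8}\omega$, then for every $t\in[-R^p,0]$ and a.e.\ $x\in B_{mR}$ we get
\[
u(x,t)>\mu^{+}-2^{-(\kappa_0+1)}\omega\ \Longrightarrow\ u(x,-R^{p})>\mu^{+}-\tfrac{\sqrt{15}}{8}\omega-2^{-(\kappa_0+1)}\omega=\mu^+-\tfrac12\omega .
\]
Consequently, by \eqref{WMDN001}, the left-hand side of \eqref{DQAF001} is at most $\frac12\le\frac45$.

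\emph{Step 3.} Otherwise $\omega<\frac{8}{\sqrt{15}}C_3\mathcal{M}_\ast R^{p\alpha}$. Since $R\le1$ and $\varepsilon_0\le p\alpha$ by \eqref{EXP01} (for $p<2$, $\varepsilon_0=\min\{p\alpha,\varepsilon\}$), we have $R^{p\alpha}\le R^{\varepsilon_0}$. Set $\sigma_0=\min\{1,\sqrt{15}/(8C_3)\}$, which depends only on the data and not on $\sigma$. Then for $\sigma<\sigma_0$ we obtain $\omega\le\sigma^{-1}\mathcal{M}_\ast R^{\varepsilon_0}$, which is the first alternative.

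The only delicate point is the bookkeeping of Step 1. One must check that the time window stays a fixed distance from $t=-1$, so that the factor $\delta^{-\beta}$ is a harmless constant. One must also check that the constant in Theorem \ref{thm:unified}, which blows up as $p\to2$ (Remark after the theorem) and depends on $\mathcal{Q}_2,\mathcal{Q}_3$, is controlled by data absorbed into $\sigma_0$ and $\mathcal{M}_\ast$. The role of the factor $m$ is irrelevant here, because the argument is pointwise in $x$; it only requires $B_{mR}\subset B_1$, which holds by the inclusions preceding \eqref{AH26} when $A\ge\mathcal{M}_\ast$.

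If this route were unavailable, I would fall back on the classical argument. That argument applies the energy inequality \eqref{ENE01} to $(u-k)_+$ with $k=\mu^+-\omega/2$ and a time-independent cutoff. It then absorbs the source term through the case split $\omega\le\sigma^{-1}\mathcal{M}_\ast R^{\varepsilon_0}$, which is where $\varepsilon$ enters $\varepsilon_0$, and it uses the logarithmic choice of $\kappa_0$ to reach the constant $\frac45$. The direct temporal-H\"older argument above is simpler and is the one I would pursue.
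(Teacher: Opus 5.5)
Your proof is correct. It uses the same key input as the paper, namely the $L^\infty$-valued temporal H\"older bound of Theorem \ref{thm10} between $-R^{p}$ and $t$. The difference is how that bound becomes measure information.

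\emph{The paper's route.} It works with $v_+=(u-(\mu^+-\omega/2))_+$ and compares $\int_{B_{mR}}v_+^2$ at the two times.
\begin{itemize}
\item By \eqref{WMDN001} and $0\le v_+\le\omega/2$, the integral is at most $\omega^2|B_{mR}|/8$ at $t=-R^p$.
\item The increment is bounded by $\omega\int_{B_{mR}}|u(\cdot,t)-u(\cdot,-R^p)|\,dx\le\omega\mathfrak{S}_\ast R^{p\alpha}|B_{mR}|$. In the nontrivial alternative this is at most $\omega^2|B_{mR}|/16$.
\item Chebyshev's inequality at the level $\mu^+-2^{-(\kappa_0+1)}\omega$ then gives the ratio $\frac{3}{4(1-2^{-\kappa_0})^2}$. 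This equals $\frac45$ exactly because of the choice \eqref{AH26}.
\end{itemize}

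\emph{Comparison.} Your pointwise inclusion of superlevel sets is shorter and sharper. It gives the ratio $\frac12$. It also shows that the specific value of $\kappa_0$ plays no role: any level strictly above $\mu^+-\omega/2$ works, with $\sigma_0$ adjusted to the gap. The paper's route has two features of its own. It only needs a spatially averaged form of the temporal estimate on $B_{mR}$, and it keeps the structure of the classical energy-based propagation lemma. On the other hand, it needs $u\le\mu^+$ on $B_{mR}\times[-R^p,0]$, whereas your argument never uses $\mu^+$ as an upper bound.

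\emph{A bookkeeping remark.} It is cleaner to split first on whether $\omega\le\sigma^{-1}\mathcal{M}_\ast R^{\varepsilon_0}$. In the complementary case, your smallness condition $C_2R^{p\alpha}\le\frac{\sqrt{15}}{8}\omega$ follows from $\sigma<\sigma_0$ and $R^{p\alpha}\le R^{\varepsilon_0}$. Moreover, by \eqref{M01}, $mR\le R^{1-\bar\varepsilon_0}$ is equivalent to $\omega\ge\sigma^{-1}\mathcal{M}_\ast R^{\varepsilon_0}$. So $B_{mR}$ sits inside the reference cylinder without invoking the standing assumption $\omega>\sigma^{-1}AR^{\varepsilon_0}$.
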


\begin{proof}
We only prove \eqref{DQAF001}, as \eqref{WZPMQ001} can be established similarly.\ For notational simplicity, denote $v_{+}:=(u-(\mu^{+}-2^{-1}\omega))_{+}$.\ Theorem \ref{thm10}, combined with \eqref{WMDN001}, implies that for any $t\in[-R^{p},0]$,
\begin{align}\label{QNE089}
		\int_{B_{mR}}v_{+}^{2}(x,t)dx=&\int_{B_{mR}}v_{+}^{2}(x,-R^{p})dx+\int_{B_{mR}}(v_{+}^{2}(x,t)-v_{+}^{2}(x,-R^{p}))dx\notag\\
		\leq&\frac{\omega^{2}}{8}|B_{mR}|+\omega\int_{B_{mR}}|v_{+}(x,t)-v_{+}(x,-R^{p})|dx\notag\\
		\leq& \Big(\frac{\omega^{2}}{8}+\omega\mathfrak{S}_{\ast}R^{p\alpha}\Big)|B_{mR}|,
\end{align}
where $\alpha$ is given in \eqref{eq:unified-exponents} and the constant $\mathfrak{S}_{\ast}$ is determined by Theorem \ref{thm10}. Then for $\sigma\in(0,\sigma_{0})$ with $\sigma_{0}:=\frac{\mathcal{M}_{\ast}}{16\mathfrak{S}_{\ast}}\in(0,1),$ we deduce that if $\omega>\sigma^{-1}\mathcal{M}_{\ast}R^{\varepsilon_{0}}$, there holds
\begin{align*}
\omega\mathfrak{S}_{\ast}R^{p\alpha}\leq \frac{\omega^{2}}{16}.
\end{align*}	
Inserting this into \eqref{QNE089}, we derive that for $t\in[-R^{p},0]$, 
\begin{align}\label{Z26}
\int_{B_{mR}}v_{+}^{2}(x,t)dx\leq \frac{3\omega^{2}}{16}|B_{mR}|.	
\end{align}	

On the other hand, we see that for any $t\in[-R^{p},0]$, 
	\begin{align*}
		&\int_{B_{mR}}v_{+}^{2}(x,t)dx\geq\frac{\omega^{2}}{4}(1-2^{-\kappa_{0}})^{2}|B_{mR}\cap\{u(\cdot,t)>\mu^{+}-2^{-(\kappa_{0}+1)}\omega\}|,
	\end{align*}
	where $\kappa_{0}$ is defined in \eqref{AH26}.\ Combining this with \eqref{Z26}, it follows that for every $t\in[-R^{p},0]$,
	\begin{align*}
		&\frac{|B_{mR}\cap\{u(\cdot,t)>\mu^{+}-2^{-(\kappa_{0}+1)}\omega\}|}{|B_{mR}|}\leq\frac{3}{4(1-2^{-\kappa_{0}})^{2}}\leq\frac{4}{5}.
	\end{align*}
This completes the proof.

\end{proof}

\begin{lemma}\label{lem005}
Suppose that $\sigma,m,\mathcal{M}_{\ast}$ are introduced by \eqref{Z999}--\eqref{M01}.\ Then there exists a constant $\overline{C}_{0}>1$, whose value depends only on the prescribed data and is independent of $\sigma$, such that for all $\gamma\in(0,1)$ and $\delta\in(0,\frac{1}{2}]$,
	
	$(i)$ if
	\begin{align}\label{D09}
		\frac{| B_{mR/2}\cap\{u(\cdot,t)>\mu^{+}-\delta\omega\}|}{|B_{mR/2}|}\leq1-\gamma,\quad\forall\;t\in[-R^{p},0],
	\end{align}
	then for every $j\geq1$, we derive either
	\begin{align*}
		\omega\leq&\sigma^{-1}\max\Big\{\mathcal{M}_{\ast},(\delta^{-1}2^{j})^{\frac{p^{2}l_{0}}{2pl_{0}+n(p-2)}}\Big\}R^{\varepsilon_{0}},
	\end{align*}
	or
	\begin{align}\label{DEC001}
		\frac{|Q(mR/2,(R/2)^{p})\cap\{u>\mu^{+}-\frac{\delta\omega}{2^{j}}\}|}{|Q(mR/2,(R/2)^{p})|}\leq\frac{\overline{C}_{0}}{\gamma j^{\frac{p-1}{p}}};
	\end{align}
	
	$(ii)$ if
	\begin{align*}
		\frac{|B_{mR/2}\cap\{u(\cdot,t)<\mu^{-}+\delta\omega\}|}{|B_{mR/2}|}\leq1-\gamma,\quad\forall\;t\in[-R^{p},0],
	\end{align*}
	then for every $j\geq1$, we derive either
	\begin{align*}
			\omega\leq&\sigma^{-1}\max\Big\{\mathcal{M}_{\ast},(\delta^{-1}2^{j})^{\frac{p^{2}l_{0}}{2pl_{0}+n(p-2)}}\Big\}R^{\varepsilon_{0}},
	\end{align*}
	or
	\begin{align}\label{D99}
		\frac{|Q(mR/2,(R/2)^{p})\cap\{u<\mu^{-}+\frac{\delta\omega}{2^{j}}\}|}{|Q(mR/2,(R/2)^{p})|}\leq\frac{\overline{C}_{0}}{\gamma j^{\frac{p-1}{p}}}.
	\end{align}

\end{lemma}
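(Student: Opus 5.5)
The plan is the slice-wise De Giorgi decay argument of \cite{HLZ202601}: the isoperimetric inequality \eqref{pro001} on each time slice, the energy inequality of Lemma \ref{lem003} for the gradient, and a telescoping sum over levels. I only treat $(i)$; $(ii)$ follows by the same argument applied to $(u-k)_-$. Set
\[
k_i=\mu^+-\delta\omega 2^{-i},\qquad A_i(t)=B_{mR/2}\cap\{u(\cdot,t)>k_i\},\qquad A_i=\bigcup_{t\in(-(R/2)^p,0]}A_i(t)\times\{t\},
\]
for $0\le i\le j$. Assume throughout that the first alternative fails, i.e. $\omega>\sigma^{-1}\max\{\mathcal M_\ast,(\delta^{-1}2^j)^{\frac{p^2l_0}{2pl_0+n(p-2)}}\}R^{\varepsilon_0}$; the goal is then \eqref{DEC001}.

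\textbf{Step 1 (slice-wise isoperimetry).} Since $k_i\ge\mu^+-\delta\omega$, hypothesis \eqref{D09} gives $|B_{mR/2}\cap\{u(\cdot,t)\le k_i\}|\ge\gamma|B_{mR/2}|$ for every $t\in[-R^p,0]$. Apply \eqref{pro001} on $B_{mR/2}$ with $l=k_{i+1}$ and $k=k_i$. This gives
\[
\delta\omega2^{-i-1}|A_{i+1}(t)|\le \frac{C\,mR}{\gamma}\int_{A_i(t)\setminus A_{i+1}(t)}|\nabla u|\,dx .
\]
Integrate in $t$ over $(-(R/2)^p,0]$ and apply H\"older with exponents $p$ and $p'$. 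This yields
\[
\delta\omega2^{-i-1}|A_{j}|\le \frac{C\,mR}{\gamma}\Big(\iint_{Q(mR/2,(R/2)^p)}|\nabla(u-k_i)_+|^p\Big)^{1/p}|A_i\setminus A_{i+1}|^{1/p'},
\]
where I also used $|A_j|\le|A_{i+1}|$ for $i\le j-1$.

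\textbf{Step 2 (energy bound).} The target is
\[
\iint_{Q(mR/2,(R/2)^p)}|\nabla(u-k_i)_+|^p\le C\Big(\frac{\delta\omega}{2^i}\Big)^p(mR)^{-p}|Q(mR,R^p)| .
\]
To get it, apply Lemma \ref{lem003} on $Q(mR,R^p)$ with a cutoff $\xi$ equal to $1$ on $Q(mR/2,(R/2)^p)$, $|\nabla\xi|\le C/(mR)$ and $|\partial_t\xi|\le C/R^p$, and use $(u-k_i)_+\le\delta\omega2^{-i}$. The $v^p|\nabla\xi|^p$ term has exactly the target size.

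The forcing term is $C|Q(mR,R^p)|^{1-1/l_0}$. It is absorbed into the target precisely when the first alternative fails, with $\delta2^{-i}\ge\delta2^{-j}$. This is where the exponent $\frac{p^2l_0}{2pl_0+n(p-2)}$ and $\varepsilon_0\le\varepsilon$ enter: one writes $m^{-p}=(\sigma\omega/\mathcal M_\ast)^{2-p}$ and solves for $\omega$.

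The time term $v^2|\partial_t\xi|$ (or the initial-slice term, if $\xi$ is chosen time-independent) is of size $(\delta\omega2^{-i})^2R^{-p}|Q|$. Its ratio to the target equals $(\delta2^{-i}\mathcal M_\ast/\sigma)^{2-p}$, which is not bounded a priori, so this term must be handled separately. I would first try to avoid the time cutoff altogether. Take $\xi=\xi(x)$, and compare $\int v^2\xi^p(\cdot,-R^p)$ with the final slice using the temporal estimate of Theorem \ref{thm10}, in the same spirit as \eqref{QNE089}: on the window of length $R^p$ the slices differ in $L^\infty$ by at most $\mathfrak S_\ast R^{p\alpha}$. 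That difference is $\le\delta\omega2^{-i}$ times a small factor in the non-alternative regime, so the $L^2$ terms at the two ends nearly cancel and the dissipation is controlled by the lateral term alone. If this fails, the fallback is to use the specific scaling $a_0$ versus $m$ (the freedom $A\ge\mathcal M_\ast$) so that the cylinder length balances the two terms.

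\textbf{Step 3 (summation).} Insert the energy bound from Step 2 into Step 1. The factors $\delta\omega2^{-i}$ and $mR$ cancel, leaving
\[
|A_j|\le \frac{C}{\gamma}|Q|^{1/p}|A_i\setminus A_{i+1}|^{1/p'} .
\]
Raise this to the power $p'$ and sum over $i=0,\dots,j-1$; the sets $A_i\setminus A_{i+1}$ are disjoint subsets of $Q(mR/2,(R/2)^p)$. This gives $j|A_j|^{p'}\le(C/\gamma)^{p'}|Q|^{p'}$, which is \eqref{DEC001} with exponent $j^{-1/p'}=j^{-\frac{p-1}{p}}$.

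The main obstacle is the time-derivative/initial-slice contribution in Step 2, which is mismatched by the singular intrinsic scaling. I expect to need the $L^\infty$-valued temporal H\"older bound there.
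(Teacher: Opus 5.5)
Your outline is the paper's proof. It uses slice-wise isoperimetry via \eqref{pro001}, H\"older in space--time, and Lemma \ref{lem003} with a time-independent cutoff $\zeta(x)$, so that the only time contribution is the slice difference $\int_{B_{mR}}(v_i^2(\cdot,t_1)-v_i^2(\cdot,0))\zeta^p\,dx$. That difference is controlled by Theorem \ref{thm10}, and the argument ends with the telescoping sum. The paper uses the window $(-(R/2)^p,0]$ instead of $(-R^p,0]$, which is immaterial.

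The gap is that you leave this decisive step as something you ``would first try'', and the reason you give for why it works does not suffice. Knowing that $\mathfrak{S}_{\ast}R^{p\alpha}$ is a small multiple of $\delta\omega2^{-i}$ only bounds the slice difference by a small multiple of $(\delta\omega2^{-i})^{2}(mR)^{n}$. You computed yourself that this quantity exceeds the target $T_i:=(\delta\omega2^{-i})^{p}m^{n-p}R^{n}$ by the unbounded factor $(\delta2^{-i}\mathcal{M}_{\ast}/\sigma)^{2-p}$. So ``the two ends nearly cancel'' proves nothing unless the gain is measured against $T_i$ itself.

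What closes the step is a direct comparison with $T_i$. Once $\omega>\sigma^{-1}\mathcal{M}_{\ast}R^{\varepsilon_{0}}$, one has $m<R^{-\bar{\varepsilon}_{0}}$, hence $0\le v_i\le\delta\omega2^{-i}$ on $Q(mR,R^{p})$. Then $|a_+^2-b_+^2|\le(a_++b_+)|a-b|$ together with Theorem \ref{thm10} bounds the slice difference by $C\delta\omega2^{-i}(mR)^{n}R^{p\alpha}$. Since $m^{p}=(\sigma\omega/\mathcal{M}_{\ast})^{p-2}$,
\[
\frac{\delta\omega2^{-i}(mR)^{n}R^{p\alpha}}{T_i}=(\delta\omega2^{-i})^{1-p}m^{p}R^{p\alpha}=(\delta^{-1}2^{i})^{p-1}\sigma^{-(2-p)}\mathcal{M}_{\ast}^{2-p}\,\frac{R^{p\alpha}}{\omega}.
\]
If the first alternative fails, then $\omega>\sigma^{-(2-p)}(\delta^{-1}2^{i})^{p-1}R^{p\alpha}$. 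This follows from three facts: $\sigma^{-1}\ge\sigma^{-(2-p)}$; $\frac{p^{2}l_{0}}{2pl_{0}+n(p-2)}\ge p-1$ with $\delta^{-1}2^{j}\ge\delta^{-1}2^{i}\ge1$; and $\varepsilon_{0}\le p\alpha$ with $R<1$. Hence the ratio is below the data constant $\mathcal{M}_{\ast}^{2-p}$.

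So boundedness, not smallness, is what is needed. The temporal estimate trades one factor $\delta\omega2^{-i}$ for $R^{p\alpha}$, and the factor $\sigma^{-1}$ in the threshold pays for the intrinsic weight $m^{p}$. Your proposed fallback of re-balancing $a_{0}$ against $m$ is not available in any case, since the lemma fixes the cylinders in terms of $m$ and $A$ does not appear in it. With the computation above it is not needed, and your treatment of the forcing term and of the summation then goes through as written.
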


\begin{proof}
We present only the proof of \eqref{DEC001}, since the argument for \eqref{D99} is analogous. For $i\geq0$, write $k_{i}=\mu^{+}-\frac{\delta\omega}{2^{i}}$, and
	\begin{align*}
		A_{i}(t)=B_{mR/2}\cap\{u(\cdot,t)>k_{i}\},\quad A_{i}=Q(mR/2,(R/2)^{p})\cap\{u>k_{i}\}.
	\end{align*}
From \eqref{D09}, we derive
	\begin{align*}
		|B_{mR/2}\setminus A_{i}(t)|\geq\gamma|B_{mR/2}|\geq C(n)\gamma(mR)^{n}.
	\end{align*}
This, along with \eqref{pro001}, establishes that
	\begin{align*}
		|A_{i+1}(t)|\leq\frac{C2^{i}mR}{\gamma\delta\omega }\int_{A_{i}(t)\setminus A_{i+1}(t)}|\nabla u|dx.
	\end{align*}
An application of H\"older's inequality after integration over $[-(R/2)^{p},0]$ gives that
	\begin{align*}
		|A_{i+1}|\leq&\frac{C2^{i}mR}{\gamma\delta\omega }\int_{A_{i}\setminus A_{i+1}}|\nabla u|dxdt\notag\\
		\leq& \frac{C2^{i}mR}{\gamma\delta\omega }\bigg(\int_{A_{i}\setminus A_{i+1}}|\nabla u|^{p}dxdt\bigg)^{\frac{1}{p}}|A_{i}\setminus A_{i+1}|^{\frac{p-1}{p}}\notag\\
		\leq& \frac{C2^{i}mR}{\gamma\delta\omega }\bigg(\int_{Q(mR/2,(R/2)^{p})}|\nabla (u-k_{i})_{+}|^{p}dxdt\bigg)^{\frac{1}{p}}|A_{i}\setminus A_{i+1}|^{\frac{p-1}{p}}.
	\end{align*}
Fix a cutoff function $\zeta\in C_{0}^{\infty}(B_{mR})$ satisfying the following properties:\ $\zeta=1$ in $B_{mR/2}$, $0\leq\zeta\leq1$ and $|\nabla\zeta|\leq\frac{4}{mR}$ in $B_{mR}$.\ To simplify notation, we write $v_{i}=(u-k_{i})_{+}$. Then it follows from Theorem \ref{thm10} that
\begin{align}\label{E26}
&\int_{B_{mR}}\big|\big(v_{i}^{2}(x,-(R/2)^{p})-v_{i}^{2}(x,0)\big)\big|dx\notag\\
&\leq2|B_{mR}|\sup_{Q(mR,(R/2)^{p})}v_{i} \|v_{i}(\cdot,-(R/2)^{p})-v_{i}(\cdot,0)\|_{L^{\infty}(B_{mR})}\notag\\
&\leq C2^{-i}\delta\omega (mR)^{n}R^{p\alpha},
\end{align}	
where $\alpha$ is specified in \eqref{eq:unified-exponents}. Note that 
\begin{align*}
\frac{p^{2}l_{0}}{2pl_{0}+n(p-2)}\geq p-1,\quad \frac{(2-p)(pl_{0}-n)}{2pl_{0}+n(p-2)}\leq1,
\end{align*}
which, combined with Lemma \ref{lem003} and \eqref{E26}, leads to that if
	\begin{align*}
		\omega>&\sigma^{-1}\max\Big\{\mathcal{M}_{\ast},(\delta^{-1}2^{i})^{\frac{p^{2}l_{0}}{2pl_{0}+n(p-2)}}\Big\}R^{\varepsilon_{0}}\notag\\
		\geq&\max\Big\{\sigma^{-1}\mathcal{M}_{\ast},\big((\delta^{-1}2^{i})^{p^{2}l_{0}}\sigma^{(p-2)(pl_{0}-n)}\big)^{\frac{1}{2pl_{0}+n(p-2)}}\Big\}R^{\varepsilon_{0}},
	\end{align*}
	then
	\begin{align*}
		&\frac{L_{1}}{6}\int_{Q(mR/2,(R/2)^{p})}|\nabla (u-k_{i})_{+}|^{p}dxdt\leq\frac{L_{1}}{6}\int_{Q(mR,(R/2)^{p})}|\nabla(v_{i}\zeta)|^{p}dxdt\notag\\
		&\leq \int_{B_{mR}}\big(v_{i}^{2}(x,-(R/2)^{p})-v_{i}^{2}(x,0)\big)\zeta dx+C\int_{Q(mR,(R/2)^{p})\cap\{u>k_{i}\}}v_{i}^{p}|\nabla\zeta|^{p}dxdt\notag\\
		&\quad+C|Q(mR,(R/2)^{p})\cap\{u>k_{i}\}|^{\frac{l_{0}-1}{l_{0}}}\notag\\
		&\leq C\Big(\frac{\delta\omega}{2^{i}} (mR)^{n}R^{p\alpha}+\frac{(\delta\omega)^{p}}{2^{pi}}m^{n-p}R^{n}+m^{\frac{n(l_{0}-1)}{l_{0}}}R^{\frac{(n+p)(l_{0}-1)}{l_{0}}}\Big)\leq \frac{C(\delta\omega)^{p}R^{n}}{2^{pi}m^{p-n}}.
	\end{align*}
The preceding facts imply that
	\begin{align*}
		|A_{i+1}|\leq&\frac{C}{\gamma}|A_{i}\setminus A_{i+1}|^{\frac{p-1}{p}}|Q(mR/2,(R/2)^{p})|^{\frac{1}{p}}.
	\end{align*}
Hence, it follows that for every $j>i\geq0$, if
	\begin{align*}
		\omega>&\sigma^{-1}\max\Big\{\mathcal{M}_{\ast},(\delta^{-1}2^{j})^{\frac{p^{2}l_{0}}{2pl_{0}+n(p-2)}}\Big\}R^{\varepsilon_{0}},
	\end{align*}
then
	\begin{align*}
		j|A_{j}|^{\frac{p}{p-1}}\leq&\sum^{j-1}_{i=0}|A_{i+1}|^{\frac{p}{p-1}}\leq\frac{C}{\gamma^{\frac{p}{p-1}}}|Q(mR/2,(R/2)^{p})|^{\frac{p}{p-1}}.
	\end{align*}
This concludes that \eqref{DEC001} holds.

\end{proof}

\begin{lemma}\label{LEM0035}
With $\sigma,m,\mathcal{M}_{\ast}$ defined as in \eqref{Z999}--\eqref{M01}, there is a constant $\gamma_{0}\in(0,1)$ depending
solely on the prescribed data and independent of $\sigma$ such that

$(i)$ if
\begin{align}\label{ZWZ007}
|Q(mR,R^{p})\cap\{u>\mu^{+}-\sigma\omega\}|\leq\gamma_{0}|Q(mR,R^{p})|,
\end{align}
then either $\omega\leq \sigma^{-1}\mathcal{M}_{\ast}R^{\varepsilon_{0}}$, or
\begin{align}\label{DZ001}
u\leq \mu^{+}-\frac{\sigma\omega}{2},\quad\mathrm{in}\text{ $Q(mR/2,(R/2)^{p})$;}
\end{align}

$(ii)$ if
\begin{align*}
|Q(mR,R^{p})\cap\{u<\mu^{-}+\sigma\omega\}|\leq\gamma_{0}|Q(mR,R^{p})|,
\end{align*}
then either $\omega\leq \sigma^{-1}\mathcal{M}_{\ast}R^{\varepsilon_{0}}$, or
\begin{align}\label{ZWZ009}
u\geq \mu^{-}+\frac{\sigma\omega}{2},\quad\mathrm{in}\text{ $Q(mR/2,(R/2)^{p})$.}
\end{align}

\end{lemma}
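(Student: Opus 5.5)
We prove only $(i)$; part $(ii)$ follows by the symmetric argument applied to $(u-k)_-$. The plan is a standard De Giorgi iteration on shrinking cylinders. We exploit Theorem \ref{thm10} to avoid the usual singular-case difficulty of controlling the initial-time term. For $i\geq0$ set
\[
R_i=\frac{R}{2}+\frac{R}{2^{i+1}},\quad \tilde R_i=\frac{R_i+R_{i+1}}{2},\quad k_i=\mu^+-\frac{\sigma\omega}{2}-\frac{\sigma\omega}{2^{i+1}},
\]
together with the cylinders $Q_i=Q(mR_i,R_i^{p})$, $\tilde Q_i=Q(m\tilde R_i,\tilde R_i^p)$ and the level sets $A_i=Q_i\cap\{u>k_i\}$, $Y_i=|A_i|/|Q_i|$. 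Take cutoffs $\xi_i$ with $\xi_i=1$ on $\tilde Q_i$, vanishing on the lateral boundary of $Q_i$ and near its bottom, with $|\nabla\xi_i|\leq C2^i/(mR)$ and $|\partial_t\xi_i|\leq C2^{ip}/R^p$. Since $k_0=\mu^+-\sigma\omega$, hypothesis \eqref{ZWZ007} says exactly that $Y_0\leq\gamma_0$.

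The first step is the energy estimate. Apply Lemma \ref{lem003} to $v_i=(u-k_i)_+\leq\sigma\omega/2^{i}$ on $Q_i$. With the choice of $\xi_i$ just described, the initial term is absent, and we get
\[
\sup_t\int v_i^2\xi_i^p+\int|\nabla(v_i\xi_i)|^p\leq C\Big(\frac{2^{ip}(\sigma\omega)^2}{R^p}+\frac{2^{ip}(\sigma\omega)^p}{(mR)^p}\Big)|A_i|+C|A_i|^{1-\frac1{l_0}}.
\]
Because $m^p=(\sigma\omega/\mathcal M_*)^{p-2}$, we have $(\sigma\omega)^p/m^p=\mathcal M_*^{p-2}(\sigma\omega)^2$, so both gradient-type terms reduce to $C2^{ip}(\sigma\omega)^2R^{-p}|A_i|$. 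The forcing term is handled by writing $|A_i|^{1-1/l_0}=|A_i|\,|A_i|^{-1/l_0}$ and using $|A_i|\leq|Q_i|\sim m^nR^{n+p}$. This term is then dominated by the previous one precisely when $\omega>\sigma^{-1}\mathcal M_*R^{\varepsilon_0}$, which is the meaning of the exponent $\varepsilon$ entering $\varepsilon_0$. I would verify this exponent bookkeeping carefully; here one uses $\frac{p(p(l_0-1)-n)}{2pl_0+n(p-2)}\geq\varepsilon_0$.

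The second step converts energy into decay. On $A_{i+1}\cap\tilde Q_i$ we have $v_i\geq\sigma\omega 2^{-i-2}$. Lemma \ref{lem001}(i) gives $\int|v_i\xi_i|^{p\tilde\chi}\leq C\int|\nabla(v_i\xi_i)|^p(\sup_t\int v_i^2)^{2/n}$. Integrating in time and using Hölder,
\[
\Big(\frac{\sigma\omega}{2^{i}}\Big)^{p}|A_{i+1}|\leq\Big(\int\!\!\int|v_i\xi_i|^{p\frac{n+2}{n}}\Big)^{\frac{n}{n+2}}|A_i|^{\frac{2}{n+2}}.
\]
Inserting the energy bound, the powers of $\sigma\omega$, $m$ and $R$ balance, because the intrinsic scaling of $m$ is exactly what makes $Q(mR,R^p)$ homogeneous. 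One obtains $Y_{i+1}\leq Cb^iY_i^{1+\frac{2}{n+2}}$ with $b=2^{C(n,p)}$ and $C$ depending only on the data, and in particular not on $\sigma$ (this again uses $m\geq1$ and the identity for $m^p$). The fast geometric convergence lemma then gives $Y_i\to0$ as soon as $\gamma_0=C^{-\frac{n+2}{2}}b^{-(\frac{n+2}{2})^2}$. Hence $u\leq\lim_i k_i=\mu^+-\sigma\omega/2$ on $Q(mR/2,(R/2)^p)$, which is \eqref{DZ001}.

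The main obstacle is the exponent balance in the singular range $p<2$. There, Lemma \ref{lem001}(i) mixes the $L^2$ level of the time term with the $L^p$ gradient. If that fails to close uniformly in $\sigma$, my fallback is to use Theorem \ref{thm10} in place of the $\sup_t\int v_i^2$ term. Since $\|u(\cdot,t)-u(\cdot,s)\|_\infty\leq CR^{p\alpha}\ll\sigma\omega$ once $\omega>\sigma^{-1}\mathcal M_*R^{\varepsilon_0}$, the sup-in-time term is comparable to its value at a single slice. This lets one run an essentially elliptic iteration slice by slice, as in the proof of Lemma \ref{lem005}.
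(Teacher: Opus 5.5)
Your scheme is the paper's: shrinking cylinders $Q(mR_i,R_i^p)$, levels $k_i\uparrow\mu^+-\frac{\sigma\omega}{2}$ (identical to the paper's $k_i$), a cutoff vanishing on the parabolic boundary so that Lemma \ref{lem003} has no initial term, the identity $(\sigma\omega/m)^p=\mathcal M_*^{p-2}(\sigma\omega)^2$, Lemma \ref{lem001}(i), and fast geometric convergence. However, your treatment of the forcing term goes the wrong way, and that step fails.

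From $|A_i|\le|Q_i|$ you only get $|A_i|^{-1/l_0}\ge|Q_i|^{-1/l_0}$. So $C|A_i|^{1-1/l_0}$ cannot be dominated by $C(\sigma\omega)^2R^{-p}|A_i|$: their ratio is of order $|A_i|^{-1/l_0}(\sigma\omega)^{-2}R^{p}$, which blows up as $|A_i|\to0$, the very regime the iteration must reach. Hence the energy bound linear in $|A_i|$, and the recursion $Y_{i+1}\le Cb^iY_i^{1+2/(n+2)}$ built on it, are unjustified. Notice that your recursion never uses $l_0>\frac{n+p}{p}$, which is exactly what is needed to close the iteration in the presence of $\psi$.

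The paper uses the opposite inequality, $|A_i|\le|Q_i|^{1/l_0}|A_i|^{1-1/l_0}$. This bounds the whole right-hand side by $C\bigl(2^{pi}(\sigma\omega)^2m^{n/l_0}R^{-\frac{p(l_0-1)-n}{l_0}}+1\bigr)|A_i|^{1-\frac{1}{l_0}}$. The hypothesis $\omega>\sigma^{-1}\mathcal M_*R^{\varepsilon_0}$, together with $\varepsilon_0\le\varepsilon$ and $\mathcal M_*\ge1$, gives $(\sigma\omega)^2m^{n/l_0}R^{-\frac{p(l_0-1)-n}{l_0}}\ge1$, so the $1$ is absorbed. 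The recursion then has exponent $\chi(1-\frac{1}{l_0})$ in the paper's Chebyshev route (with the power $p\tilde\chi$), or $\frac{(n+p)(1-1/l_0)+2}{n+2}$ in your H\"older route. Either exceeds $1$ precisely because $l_0>\frac{n+p}{p}$.

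There is a second error in your Step 2. Lemma \ref{lem001}(i) carries the factor $\bigl(\int|v_i\xi_i|^2dx\bigr)^{\chi-1}$ with $\chi-1=\frac{p}{n}$, not $(\sup_t\int v_i^2)^{2/n}$. With $\frac{2}{n}$ the inequality is not homogeneous under $v\mapsto\lambda v$ when $p\neq2$, and for $p<2$ it fails as $\lambda\to0$, which is the relevant small-amplitude regime. Your exponent $1+\frac{2}{n+2}$ comes from this wrong power combined with the linear energy bound. You also need $\xi_i^2\le\xi_i^p$ (true since $p<2$) to bound $\sup_t\int|v_i\xi_i|^2$ by the energy term $\sup_t\int v_i^2\xi_i^p$.

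With the correct power the scaling balances exactly: the leftover factor is a power of $(\sigma\omega)^{2-p}m^p=\mathcal M_*^{2-p}$, so the constant is independent of $\sigma$ and $R$. Your fallback via Theorem \ref{thm10} is therefore unnecessary. That theorem plays no role in this lemma, since the initial term disappears simply because $\xi_i$ vanishes at the bottom of $Q_i$. Finally, $v_i\le\mu^+-k_i\le\sigma\omega$, not $\sigma\omega/2^i$, although your displayed energy bound is consistent with the correct estimate.
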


\begin{proof}
We only provide the proof of \eqref{DZ001}, as the proof of \eqref{ZWZ009} is similar.\ For $i\geq0,$ write
\begin{align*}
	r_{i}=\frac{R}{2}+\frac{R}{2^{i+1}},\quad k_{i}=\mu^{+}-\sigma\omega+\frac{\sigma\omega}{2}(1-2^{-i}).
\end{align*}
Introduce a smooth cutoff function $\xi_{i}\in C^{\infty}(Q(mr_{i},r_{i}^{p}))$ with the following properties: $0\leq\xi_{i}\leq1$ in $Q(mr_{i},r_{i}^{p})$, 	$\xi_{i}=1$ in $Q(mr_{i+1},r_{i+1}^{p})$, $\xi_{i}=0$ on $\partial_{pa}Q(mr_{i},r_{i}^{p})$, and
\begin{align*}
		|\nabla\xi_{i}|\leq\frac{2^{i+3}}{mR},\quad|\partial_{t}\xi_{i}|\leq\frac{2}{r_{i}^{p}-r_{i+1}^{p}}\leq\frac{C2^{i}}{R^{p}}.
\end{align*}
Utilizing \eqref{M01}, we have
\begin{align}\label{Z90}
	(\sigma\omega)^{2}=\Big(\frac{\sigma\omega}{m}\Big)^{p}\mathcal{M}_{\ast}^{2-p}.
\end{align}
Noting that $1<p<2$ implies $\xi_{i}^{2}\leq\xi_{i}^{p}$ in $Q(mr_{i},r_{i}^{p})$, we infer from \eqref{ENE01} and \eqref{Z90} that if $\omega>\sigma^{-1}\mathcal{M}_{\ast}R^{\varepsilon_{0}}$, then
\begin{align*}
	&\sup\limits_{t\in(-r_{i}^{p},0)}\int_{B_{mr_{i}}}|(u-k_{i})_{+}\xi_{i}|^{2}dx+\frac{L_{1}}{6}\int_{Q(mr_{i},r_{i}^{p})}|\nabla((u-k_{i})_{+}\xi_{i})|^{p}dxdt\notag\\
	&\leq C\int_{Q(mr_{i},r_{i}^{p})}\big((u-k_{i})_{+}^{2}|\partial_{t}\xi_{i}|+(u-k_{i})_{+}^{p}|\nabla\xi_{i}|^{p}\big)+C|Q(mr_{i},r_{i}^{p})\cap \{u>k_{i}\}|^{\frac{l_{0}-1}{l_{0}}}\notag\\
	&\leq C\bigg(\frac{2^{i}(\sigma\omega)^{2}}{R^{p}}+\Big(\frac{2^{i}\sigma\omega}{mR}\Big)^{p}\bigg)|Q(mr_{i},r_{i}^{p})\cap \{u>k_{i}\}|+C|Q(mr_{i},r_{i}^{p})\cap \{u>k_{i}\}|^{\frac{l_{0}-1}{l_{0}}}\notag\\
	&\leq C\bigg(\frac{2^{pi}(\sigma\omega)^{2}m^{\frac{n}{l_{0}}}}{R^{\frac{p(l_{0}-1)-n}{l_{0}}}}+1\bigg)|Q(mr_{i},r_{i}^{p})\cap \{u>k_{i}\}|^{\frac{l_{0}-1}{l_{0}}}\notag\\
	&\leq\frac{C2^{pi}(\sigma\omega)^{2}m^{\frac{n}{l_{0}}}}{R^{\frac{p(l_{0}-1)-n}{l_{0}}}}|Q(mr_{i},r_{i}^{p})\cap \{u>k_{i}\}|^{\frac{l_{0}-1}{l_{0}}}.
\end{align*}
This, along with Lemma \ref{lem001}, yields that
\begin{align}\label{ZE01}
	&\Big(\frac{\sigma\omega}{2^{i+2}}\Big)^{p\tilde{\chi}}|Q(mr_{i+1},r_{i+1}^{p})\cap \{u>k_{i+1}\}|\notag\\
	&=(k_{i+1}-k_{i})^{p\tilde{\chi}}|Q(mr_{i+1},r_{i+1}^{p})\cap \{u>k_{i+1}\}|\leq\int_{Q(mr_{i},r_{i}^{p})}|(u-k_{i})_{+}\xi_{i}|^{p\tilde{\chi}}dxdt\notag\\
	&\leq C\bigg(\sup\limits_{t\in(-r_{i}^{p},0)}\int_{B_{mr_{i}}}|(u-k_{i})_{+}\xi_{i}|^{2}dx\bigg)^{\chi-1}\int_{Q(mr_{i},r_{i}^{p})}|\nabla((u-k_{i})_{+}\xi_{i})|^{p}dxdt\notag\\
	&\leq\bigg(\frac{C2^{pi}(\sigma\omega)^{2}m^{\frac{n}{l_{0}}}}{R^{\frac{p(l_{0}-1)-n}{l_{0}}}}|Q(mr_{i},r_{i}^{p})\cap \{u>k_{i}\}|^{1-\frac{1}{l_{0}}}\bigg)^{\chi},
\end{align}
where $\chi$ and $\tilde{\chi}$ are given by \eqref{chi}. Denote
\begin{align*}
	H_{i}=\frac{|Q(mr_{i},r_{i}^{p})\cap \{u>k_{i}\}|}{|Q(mR,R^{p})|},\quad\mathrm{for}\;i\geq0.
\end{align*}
In light of \eqref{ZE01}, we derive
\begin{align*}
	&H_{i+1}\leq\Big(C2^{pi(1+\frac{\tilde{\chi}}{\chi})}H_{i}^{1-\frac{1}{l_{0}}}\Big)^{\chi}\leq\prod\limits^{i}_{s=0}\Big[\Big(C2^{p(1+\frac{\tilde{\chi}}{\chi})(i-s)}\Big)^{\chi}\Big]^{\big(\frac{\chi(l_{0}-1)}{l_{0}}\big)^{s}}H_{0}^{\left(\frac{\chi(l_{0}-1)}{l_{0}}\right)^{i+1}}\notag\\
	&\leq C^{\chi\sum\limits^{i}_{s=0}\left(\frac{\chi(l_{0}-1)}{l_{0}}\right)^{s}}2^{p(\chi+\tilde{\chi})\sum\limits_{s=0}^{i}(i-s)\left(\frac{\chi(l_{0}-1)}{l_{0}}\right)^{s}}H_{0}^{\left(\frac{\chi(l_{0}-1)}{l_{0}}\right)^{i+1}}\leq(\overline{C}_{\ast}H_{0})^{\left(\frac{\chi(l_{0}-1)}{l_{0}}\right)^{i+1}},
\end{align*}
where the constant $\overline{C}_{\ast}$ does not depend on $\sigma$. Then picking $\gamma_{0}=(2\overline{C}_{\ast})^{-1}$ in \eqref{ZWZ007}, we obtain
\begin{align*}
	H_{i+1}\leq2^{-\left(\frac{\chi(l_{0}-1)}{l_{0}}\right)^{i+1}}\longrightarrow0,\quad\mathrm{as}\;i\rightarrow\infty.
\end{align*}
This completes the proof.

\end{proof}

The combination of Lemmas \ref{lem006}, \ref{lem005} and \ref{LEM0035} gives the following result.
\begin{corollary}\label{AMZW01}
The constants $\sigma$ and $A$, followed by $a_{0}$ and $m$, can be determined, together with a constant $\kappa_{\ast}>1$, all depending only on the prescribed data, such that

$(i)$ if
\begin{align}\label{W008}
\frac{|B_{mR/2}\cap\{u(\cdot,-(R/2)^{p})>\mu^{+}-2^{-1}\omega\}|}{|B_{mR/2}|}\leq\frac{1}{2},
\end{align}
then either $\omega\leq \sigma^{-1}AR^{\varepsilon_{0}}=2^{\kappa_{\ast}}AR^{\varepsilon_{0}}$, or
\begin{align}\label{W009}
\sup\limits_{Q(mR/4,(R/4)^{p})}u\leq \mu^{+}-\frac{\omega}{2^{\kappa_{\ast}+1}};
\end{align}

$(ii)$ if
\begin{align*}
\frac{|B_{mR/2}\cap\{u(\cdot,-(R/2)^{p})<\mu^{-}+2^{-1}\omega\}|}{|B_{mR/2}|}\leq\frac{1}{2},
\end{align*}
then either $\omega\leq \sigma^{-1}AR^{\varepsilon_{0}}=2^{\kappa_{\ast}}AR^{\varepsilon_{0}}$, or
\begin{align}\label{W010}
\inf\limits_{Q(mR/4,(R/4)^{p})}u\geq \mu^{-}+\frac{\omega}{2^{\kappa_{\ast}+1}}.
\end{align}

\end{corollary}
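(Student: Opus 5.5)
We chain Lemmas \ref{lem006}, \ref{lem005} and \ref{LEM0035}, working at scale $R/2$ in the first two steps and at scale $R$ in the third, and fix the constants in the order $\kappa_{\ast}$, $\sigma$, $A$. We treat only $(i)$; $(ii)$ is symmetric. Assume \eqref{W008}. We may suppose $\omega>\sigma^{-1}AR^{\varepsilon_{0}}$ with $A\geq\mathcal{M}_{\ast}$, since otherwise the first alternative holds. Then the cylinders $Q(mR,R^{p})$ and $Q(mR/2,(R/2)^p)$ lie in $Q(a_{0}R,R^{p})\subset Q(R^{1-\bar\varepsilon_0},R^p)$, so the oscillation and $\mu^{\pm}$ are controlled there.

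\emph{Step 1 (propagation).} Apply Lemma \ref{lem006}$(i)$ with $R$ replaced by $R/2$, i.e.\ on $B_{mR/2}\times[-(R/2)^{p},0]$; its proof uses only Theorem \ref{thm10} and the time-slice hypothesis, so the rescaling is harmless. For $\sigma<\sigma_{0}$ and $\omega>\sigma^{-1}\mathcal{M}_{\ast}(R/2)^{\varepsilon_{0}}$, which is implied by our assumption, we obtain
\[
|B_{mR/2}\cap\{u(\cdot,t)>\mu^{+}-2^{-(\kappa_{0}+1)}\omega\}|\leq\tfrac45|B_{mR/2}|
\]
for all $t\in[-(R/2)^{p},0]$.

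\emph{Step 2 (measure decay).} Apply Lemma \ref{lem005}$(i)$, again at scale $R/2$ in time, with $\delta=2^{-(\kappa_{0}+1)}$ and $\gamma=1/5$. Choose $j_{\ast}$ so large that $\overline{C}_0\,5\,j_\ast^{-(p-1)/p}\leq\gamma_{0}$, where $\gamma_{0}$ comes from Lemma \ref{LEM0035} and is independent of $\sigma$. Then set $\sigma=\delta2^{-j_{\ast}}=2^{-(\kappa_0+1+j_\ast)}$ and $\kappa_{\ast}:=\kappa_{0}+1+j_{\ast}$, which gives $\sigma^{-1}=2^{\kappa_\ast}$. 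After reducing $j_\ast$ upward if needed, we also have $\sigma<\sigma_{0}$; this is consistent because $\sigma_{0}$ and $\overline{C}_0$ do not depend on $\sigma$. Finally choose
\[
A=\max\bigl\{\mathcal{M}_{\ast},(\delta^{-1}2^{j_\ast})^{\frac{p^2l_0}{2pl_0+n(p-2)}}\bigr\}\cdot 2^{\varepsilon_0},
\]
so that the alternatives in Lemmas \ref{lem006}, \ref{lem005} and \ref{LEM0035} are all absorbed by $\omega\leq\sigma^{-1}AR^{\varepsilon_0}$. This absorption also covers the factor $(R/2)^{\varepsilon_0}$ versus $R^{\varepsilon_0}$. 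The outcome is the smallness
\[
|Q(mR/2,(R/2)^{p})\cap\{u>\mu^{+}-\sigma\omega\}|\leq\gamma_{0}|Q(mR/2,(R/2)^{p})|.
\]

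\emph{Step 3 (De Giorgi).} Apply Lemma \ref{LEM0035}$(i)$ at radius $R/2$, i.e.\ with $R$ replaced by $R/2$ throughout: the hypothesis cylinder is $Q(mR/2,(R/2)^p)$ and the conclusion holds in $Q(mR/4,(R/4)^{p})$. This yields $u\leq\mu^{+}-\sigma\omega/2=\mu^{+}-\omega2^{-(\kappa_{\ast}+1)}$ there, which is \eqref{W009}.

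The main obstacle is checking that Lemmas \ref{lem006}--\ref{LEM0035} legitimately transfer to the half-scale with the same $m$ and $\omega$. Here $m$ is defined via $\omega$ on the larger cylinder, and the lemmas' proofs used only $\mu^{\pm}$ as bounds for $u$ on a cylinder containing the relevant one, together with the intrinsic relation \eqref{Z90}. Both remain valid, with the $R$-dependent terms shifted by harmless factors $2^{\pm\varepsilon_0}$ or $2^{p}$ that we absorb into $A$ and into the constants. A second point to verify is the order of choices. $\gamma_0$, $\overline{C}_0$ and $\sigma_0$ must be genuinely independent of $\sigma$, as asserted in the lemmas, so that fixing $j_\ast$ and then $\sigma$ is not circular. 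Only after this is $A$, and hence $a_{0}$ and $m$, determined.
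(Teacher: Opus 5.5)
Your proposal is correct and follows the paper's proof essentially step for step. The paper uses the same chain of Lemmas \ref{lem006}, \ref{lem005}, \ref{LEM0035} with $\delta=2^{-(\kappa_0+1)}$, $\gamma=1/5$, $j_0$ large, $\sigma=2^{-(\kappa_0+1+j_0)}$, $\kappa_\ast=\kappa_0+1+j_0$, and $A$ the maximum of $\mathcal{M}_\ast$ and the Lemma \ref{lem005} threshold. It likewise applies Lemma \ref{lem006} and Lemma \ref{LEM0035} at radius $R/2$, and Lemma \ref{lem005} with its slice hypothesis needed only on $[-(R/2)^p,0]$, which is all its proof uses. Two cosmetic points: your opening line says the third step is at scale $R$, whereas Step 3 correctly works at $R/2$. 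Also, the extra factor $2^{\varepsilon_0}$ in $A$ is unnecessary, since $(R/2)^{\varepsilon_0}\le R^{\varepsilon_0}$ already goes the right way.
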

\begin{proof}
We shall only establish \eqref{W009}, since the proof of \eqref{W010} proceeds along similar lines.\ We first pick a sufficiently large constant $j_{0}>3$, depending only on the prescribed data, such that
\begin{align}\label{W012}
2^{-(\kappa_{0}+1+j_{0})}\leq\sigma_{0},\quad5\overline{C}_{0}j_{0}^{-\frac{p-1}{p}}\leq\gamma_{0},
\end{align}
where $\kappa_{0},\,\sigma_{0},\,\overline{C}_{0},\,\gamma_{0}$ are, respectively, given by \eqref{AH26}, Lemmas \ref{lem006}, \ref{lem005} and \ref{LEM0035}. Then, combining Lemma \ref{lem006} with \eqref{W008} and applying Lemma \ref{lem005} with $\delta=\frac{1}{2^{\kappa_{0}+1}}$,
$\sigma=\frac{1}{2^{\kappa_{0}+1+j_{0}}}$, $\gamma=\frac{1}{5}$, and $j=j_{0}$, we infer from \eqref{W012} that either $\omega\leq \sigma^{-1}AR^{\varepsilon_{0}}=2^{\kappa_{0}+1+j_{0}}AR^{\varepsilon_{0}}$ with $A=\max\big\{\mathcal{M}_{\ast},2^{\frac{p^{2}l_{0}(j_{0}+\kappa_{0}+1)}{2pl_{0}+n(p-2)}}\big\}$,
or
\begin{align}\label{W013}
&\frac{|Q(mR/2,(R/2)^{p})\cap\{u>\mu^{+}-\frac{\omega}{2^{\kappa_{0}+1+j_{0}}}\}|}{|Q(mR/2,(R/2)^{p})|}\leq5\overline{C}_{0}j_{0}^{-\frac{p-1}{p}}\leq\gamma_{0}.
\end{align}
In view of \eqref{W013}, an application of Lemma \ref{LEM0035} with $\sigma=\frac{1}{2^{\kappa_{0}+1+j_{0}}}$ shows either $\omega\leq \sigma^{-1}AR^{\varepsilon_{0}}=2^{\kappa_{0}+1+j_{0}}AR^{\varepsilon_{0}}$, or
\begin{align*}
u\leq \mu^{+}-\frac{\omega}{2^{\kappa_{0}+2+j_{0}}},\quad\mathrm{in}\text{ $Q(mR/4,(R/4)^{p})$.}
\end{align*}
The proof is thus completed by taking $\kappa_{\ast}:=\kappa_{0}+1+j_{0}$.

\end{proof}

We are now ready to prove Theorem \ref{ZWTHM90} in the supercritical singular case.
\begin{proof}[Proof of Theorem \ref{ZWTHM90} in the case when $\theta_{\mathrm{c}}<p<2$]
Using Corollary \ref{AMZW01} and following the same argument in Section 5.1 of \cite{HLZ202601}, we establish Theorem \ref{ZWTHM90} in the case of $\theta_{\mathrm{c}}<p<2.$ 

\end{proof}

\section{H\"{o}lder estimate in the degenerate case}\label{SEC007}
 
Let
\begin{align*}
	\tilde{\varepsilon}_{0}:=(p-2)\varepsilon_{0},
\end{align*}
where $\varepsilon_{0}$ is defined by \eqref{EXP01}. In view of $p>2$, we derive that for any $R\in(0,\frac{1}{2}]$, $x_{0}\in\overline{B}_{\frac{1}{2}}$, and $t_{0}\in[-\frac{1}{2},0]$, 
\begin{align*}
	[(x_{0},t_{0})+Q(R,R^{p-\tilde{\varepsilon}_{0}})]\subset B_{1}\times(-1,0].
\end{align*}
Without loss of generality, we may assume that $(x_{0},t_{0})=(0,0)$ after a suitable translation. Denote
\begin{align*}
	\tilde{\mu}^{+}=\sup\limits_{Q(R,R^{p-\tilde{\varepsilon}_{0}})}u,\quad\tilde{\mu}^{-}=\inf\limits_{Q(R,R^{p-\tilde{\varepsilon}_{0}})}u,
\end{align*}
and
\begin{align*}
	\widetilde{\omega}=\mathop{osc}\limits_{Q(R,R^{p-\tilde{\varepsilon}_{0}})}u=\tilde{\mu}^{+}-\tilde{\mu}^{-}.
\end{align*}
Introduce the cylinder as follows:
\begin{align}\label{Z01}
	Q(R,\tilde{a}_{0}R^{p}),\quad \tilde{a}_{0}=\Big(\frac{\tilde{\sigma}\widetilde{\omega}}{\tilde{A}}\Big)^{2-p},
\end{align}
where $\tilde{\sigma}\in(0,1)$ and $\tilde{A}>1$ are constants depending only on the prescribed data and to be fixed later.\ We note that the exponent $2-p$ used in this scaling was already identified in \cite{D1986,D1993}.

We consider the two alternatives $\widetilde{\omega}\leq\tilde{\sigma}^{-1}\tilde{A}R^{\varepsilon_{0}}$ and $\widetilde{\omega}>\tilde{\sigma}^{-1}\tilde{A}R^{\varepsilon_{0}}$.\ Assuming the latter, we obtain
\begin{align*}
	Q(R,\tilde{a}_{0}R^{p})\subset Q(R,R^{p-\tilde{\varepsilon}_{0}}),\quad\mathop{osc}\limits_{Q(R,\tilde{a}_{0}R^{p})}u\leq\widetilde{\omega}.
\end{align*}
Define
\begin{align}\label{Z02}
	\widetilde{m}=\Big(\frac{\tilde{\sigma}\widetilde{\omega}}{\mathcal{M}_{\ast}}\Big)^{2-p},
\end{align}
where $\mathcal{M}_{\ast}$ is defined by \eqref{Z20}. From \eqref{Z23}, we see that $\widetilde{m}\geq1$. If $\tilde{A}\geq\mathcal{M}_{\ast}$, then we have $Q(R,\widetilde{m}R^{p})\subset Q(R,\tilde{a}_{0}R^{p})$. 

\begin{lemma}\label{lemz01}
Under the definitions of $\mathcal{M}_{\ast}$ and $\tilde{\sigma},\,\widetilde{m}$ given in \eqref{Z20} and \eqref{Z01}--\eqref{Z02}, there exists a constant $\tilde{\sigma}_{0}\in(0,1]$, depending solely on the prescribed data and independent of $\tilde{\sigma}$, such that the following holds for every $\tilde{\sigma}\in(0,\tilde{\sigma}_{0})$:
	
	$(i)$ if
	\begin{align}\label{Z05}
		\frac{|B_{R}\cap\{u(\cdot,-\widetilde{m}R^{p})>\tilde{\mu}^{+}-2^{-1}\widetilde{\omega}\}|}{|B_{R}|}\leq\frac{1}{2},
	\end{align}
	then either $\widetilde{\omega}\leq \tilde{\sigma}^{-1}\mathcal{M}_{\ast}R^{\varepsilon_{0}}$, or
	\begin{align}\label{Z06}
		\frac{|B_{R}\cap\{u(\cdot,t)>\tilde{\mu}^{+}-2^{-(\kappa_{0}+1)}\widetilde{\omega}\}|}{|B_{R}|}\leq\frac{4}{5},\quad\text{for any }t\in[-\widetilde{m}R^{p},0];
	\end{align}
	
	$(ii)$ if
	\begin{align*}
		\frac{|B_{R}\cap\{u(\cdot,-\widetilde{m}R^{p})<\tilde{\mu}^{-}+2^{-1}\widetilde{\omega}\}|}{|B_{R}|}\leq\frac{1}{2},
	\end{align*}
	then either $\widetilde{\omega}\leq \tilde{\sigma}^{-1}\mathcal{M}_{\ast}R^{\varepsilon_{0}}$, or
	\begin{align}\label{Z07}
		\frac{|B_{R}\cap\{u(\cdot,t)<\tilde{\mu}^{-}+2^{-(\kappa_{0}+1)}\widetilde{\omega}\}|}{|B_{R}|}\leq\frac{4}{5},\quad\text{for any }t\in[-\widetilde{m}R^{p},0],
	\end{align}
	where $\kappa_{0}$ is defined in \eqref{AH26}.
	
\end{lemma}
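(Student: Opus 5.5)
We follow the proof of Lemma \ref{lem006}, now on the intrinsic cylinder $B_R\times[-\widetilde{m}R^{p},0]$. We prove only (i); (ii) follows by applying the same argument to $-u$. Set $v_{+}:=(u-(\tilde{\mu}^{+}-2^{-1}\widetilde{\omega}))_{+}$, so that $0\le v_+\le \widetilde{\omega}/2$ in $Q(R,\widetilde{m}R^{p})$, which is contained in $Q(R,R^{p-\tilde{\varepsilon}_0})$ in the relevant alternative. For $t\in[-\widetilde{m}R^p,0]$ we write
\begin{align*}
\int_{B_R}v_+^2(x,t)\,dx\le \int_{B_R}v_+^2(x,-\widetilde{m}R^p)\,dx+\widetilde{\omega}\int_{B_R}|v_+(x,t)-v_+(x,-\widetilde{m}R^p)|\,dx .
\end{align*}
By \eqref{Z05} and $v_+\le\widetilde\omega/2$, the first term is at most $\frac{\widetilde{\omega}^2}{8}|B_R|$. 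For the second term, $v_+$ is a $1$-Lipschitz function of $u$, so Theorem \ref{thm10} (with a fixed waiting time $\delta=1/4$, since the cylinder lies in $B_1\times(-3/4,0]$) bounds it by $\widetilde{\omega}\,\mathfrak{S}_\ast(\widetilde{m}R^p)^{\alpha}|B_R|$.

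The main task is to absorb this error term, which is where the degenerate case differs from Lemma \ref{lem006}: the time length is now $\widetilde{m}R^p$ with $\widetilde{m}=(\tilde\sigma\widetilde\omega/\mathcal M_\ast)^{2-p}$. We need
\begin{align*}
\mathfrak{S}_\ast\widetilde{m}^{\alpha}R^{p\alpha}\le \frac{\widetilde{\omega}}{16}.
\end{align*}
Since $p>2$ and $\tilde\sigma\widetilde\omega/\mathcal M_\ast$ is large in the alternative $\widetilde{\omega}>\tilde{\sigma}^{-1}\mathcal{M}_{\ast}R^{\varepsilon_{0}}$, we have $\widetilde m^\alpha=(\mathcal M_\ast/(\tilde\sigma\widetilde\omega))^{\alpha(p-2)}$. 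The requirement therefore becomes
\begin{align*}
\widetilde\omega^{1+\alpha(p-2)}\ge 16\,\mathfrak S_\ast\,\mathcal M_\ast^{\alpha(p-2)}\tilde\sigma^{-\alpha(p-2)}R^{p\alpha}.
\end{align*}
From the assumption $\widetilde\omega>\tilde\sigma^{-1}\mathcal M_\ast R^{\varepsilon_0}$ we obtain
\begin{align*}
\widetilde\omega^{1+\alpha(p-2)}>\tilde\sigma^{-1-\alpha(p-2)}\mathcal M_\ast^{1+\alpha(p-2)}R^{\varepsilon_0(1+\alpha(p-2))}.
\end{align*}
By \eqref{EXP01} with $p>2$, $\varepsilon_0(1+\alpha(p-2))\le p\alpha$, and since $R\le1$ this gives $R^{\varepsilon_0(1+\alpha(p-2))}\ge R^{p\alpha}$. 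It therefore suffices that $\tilde\sigma^{-1}\mathcal M_\ast\ge16\mathfrak S_\ast$, which holds for all $\tilde\sigma<\tilde\sigma_0:=\min\{1,\mathcal M_\ast/(16\mathfrak S_\ast)\}$. This threshold is independent of $\tilde\sigma$, as the statement requires. Checking that the exponent relation in \eqref{EXP01} is exactly what is needed here is the step I expect to be the crux; it explains the special form of $\varepsilon_0$ for $p>2$.

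Once the error term is absorbed, we get $\int_{B_R}v_+^2(x,t)\,dx\le \frac{3\widetilde{\omega}^2}{16}|B_R|$ for all $t$. On the other hand, on the set $\{u(\cdot,t)>\tilde{\mu}^{+}-2^{-(\kappa_{0}+1)}\widetilde{\omega}\}$ we have $v_+\ge\frac{\widetilde\omega}{2}(1-2^{-\kappa_0})$. Chebyshev's inequality then bounds the measure fraction by $\frac{3}{4(1-2^{-\kappa_0})^2}$. By \eqref{AH26}, $(1-2^{-\kappa_0})^2=15/16$, so this fraction equals $4/5$, which proves \eqref{Z06}.
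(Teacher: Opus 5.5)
Your proposal is correct and follows the paper's proof essentially step for step. It uses the same splitting at time $-\widetilde{m}R^{p}$, the same appeal to Theorem \ref{thm10} for the increment, the same threshold $\tilde{\sigma}_{0}=\mathcal{M}_{\ast}/(16\mathfrak{G}_{\ast})$, and the same Chebyshev step with $(1-2^{-\kappa_{0}})^{2}=15/16$; your explicit check that $\varepsilon_{0}(1+\alpha(p-2))\le p\alpha$ also spells out the absorption step, which the paper only asserts.

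One harmless slip: $\tilde{\sigma}\widetilde{\omega}/\mathcal{M}_{\ast}$ is actually smaller than $1$, not large, because $\widetilde{\omega}\le 2\|u\|_{L^{\infty}}<\mathcal{M}_{\ast}$ (this is why $\widetilde{m}\ge 1$). The identity $\widetilde{m}^{\alpha}=(\mathcal{M}_{\ast}/(\tilde{\sigma}\widetilde{\omega}))^{\alpha(p-2)}$ that you use holds regardless, so the argument is unaffected.
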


\begin{proof}
It is enough to establish \eqref{Z06}, as the proof of \eqref{Z07} proceeds along similar lines. For notational convenience, write $v_{+}:=(u-(\tilde{\mu}^{+}-2^{-1}\widetilde{\omega}))_{+}$. Making use of Theorem \ref{thm10} and \eqref{Z05}, we derive that for any $t\in[-\widetilde{m}R^{p},0]$,
	\begin{align}\label{Z09}
		\int_{B_{R}}v_{+}^{2}(x,t)dx=&\int_{B_{R}}v_{+}^{2}(x,-\widetilde{m}R^{p})dx+\int_{B_{R}}(v_{+}^{2}(x,t)-v_{+}^{2}(x,-\widetilde{m}R^{p}))dx\notag\\
		\leq&\frac{\widetilde{\omega}^{2}}{8}|B_{R}|+\widetilde{\omega}\int_{B_{R}}|v_{+}(x,t)-v_{+}(x,-\widetilde{m}R^{p})|dx\notag\\
		\leq& \Big(\frac{\widetilde{\omega}^{2}}{8}+\widetilde{\omega}\mathfrak{G}_{\ast} \big(\widetilde{m}R^{p}\big)^{\alpha}\Big)|B_{R}|,
	\end{align}
	where $\alpha$ is specified in \eqref{eq:unified-exponents} and $\mathfrak{G}_{\ast}$ is the constant determined by Theorem \ref{thm10}. Write $\tilde{\sigma}_{0}:=\frac{\mathcal{M}_{\ast}}{16\mathfrak{G}_{\ast}}\in(0,1)$. Therefore, for any $\tilde{\sigma}\in(0,\tilde{\sigma}_{0})$, if $\omega>\tilde{\sigma}^{-1}\mathcal{M}_{\ast}R^{\varepsilon_{0}},$ then we have
\begin{align*}
\widetilde{\omega}\mathfrak{G}_{\ast} \big(\widetilde{m}R^{p}\big)^{\alpha}\leq\frac{\widetilde{\omega}^{2}}{16},	
\end{align*}
which, combined with \eqref{Z09}, leads to that
\begin{align*}
	\int_{B_{R}}v_{+}^{2}(x,t)dx\leq\frac{3\widetilde{\omega}^{2}}{16}|B_{R}|.
\end{align*}			
	
	On the other hand, it follows that for every $t\in[-\widetilde{m}R^{p},0]$,
	\begin{align*}
		&\int_{B_{R}}v_{+}^{2}(x,t)dx\geq\frac{\widetilde{\omega}^{2}}{4}(1-2^{-\kappa_{0}})^{2}|B_{R}\cap\{u(\cdot,t)>\tilde{\mu}^{+}-2^{-(\kappa_{0}+1)}\widetilde{\omega}\}|,
	\end{align*}
	where $\kappa_{0}$ is given by \eqref{AH26}. Then we deduce that for any $t\in[-\widetilde{m}R^{p},0]$, 
	\begin{align*}
		&\frac{|B_{R}\cap\{u(\cdot,t)>\tilde{\mu}^{+}-2^{-(\kappa_{0}+1)}\widetilde{\omega}\}|}{|B_{R}|}\leq\frac{3}{4(1-2^{-\kappa_{0}})^{2}}\leq\frac{4}{5}.
	\end{align*}
This concludes the proof.

\end{proof}

\begin{lemma}\label{lemz02}
With $\mathcal{M}_{\ast}$ as in \eqref{Z20} and $\tilde{\sigma},\,\widetilde{m}$ as in \eqref{Z01}--\eqref{Z02}, there exists some constant $\widetilde{C}_{0}>1$, depending only on the prescribed data and not on $\tilde{\sigma}$, such that for all $\widetilde{\gamma}\in(0,1)$ and $\tilde{\delta}\in(0,\frac{1}{2}]$,
	
	$(i)$ if
	\begin{align}\label{Z10}
		\frac{| B_{R/2}\cap\{u(\cdot,t)>\tilde{\mu}^{+}-\tilde{\delta}\widetilde{\omega}\}|}{|B_{R/2}|}\leq1-\widetilde{\gamma},\quad\forall\;t\in[-\widetilde{m}R^{p},0],
	\end{align}
then for each fixed $j\geq1$ and every $0<\tilde{\sigma}\leq2^{-j}\tilde{\delta}$, one has either $$\widetilde{\omega}\leq\tilde{\sigma}^{-1}(2^{j}\tilde{\sigma}\tilde{\delta}^{-1})^{\frac{pl_{0}}{p(l_{0}-1)+2}}\mathcal{M}_{\ast}R^{\varepsilon_{0}},$$
 or
	\begin{align}\label{Z11}
		\frac{| Q(R/2,\widetilde{m}(R/2)^{p})\cap\{u>\tilde{\mu}^{+}-\frac{\tilde{\delta}\widetilde{\omega}}{2^{j}}\}|}{|Q(R/2,\widetilde{m}(R/2)^{p})|}\leq\frac{\widetilde{C}_{0}}{\widetilde{\gamma} j^{\frac{p-1}{p}}};
	\end{align}
	
	$(ii)$ if
	\begin{align*}
		\frac{|B_{R/2}\cap\{u(\cdot,t)<\tilde{\mu}^{-}+\tilde{\delta}\widetilde{\omega}\}|}{|B_{R/2}|}\leq1-\widetilde{\gamma},\quad\forall\;t\in[-\widetilde{m}R^{p},0],
	\end{align*}
then for each fixed $j\geq1$ and every $0<\tilde{\sigma}\leq2^{-j}\tilde{\delta}$, one has either $$\widetilde{\omega}\leq\tilde{\sigma}^{-1}(2^{j}\tilde{\sigma}\tilde{\delta}^{-1})^{\frac{pl_{0}}{p(l_{0}-1)+2}}\mathcal{M}_{\ast}R^{\varepsilon_{0}},$$
or
	\begin{align}\label{Z13}
		\frac{| Q(R/2,\widetilde{m}(R/2)^{p})\cap\{u<\tilde{\mu}^{-}+\frac{\tilde{\delta}\widetilde{\omega}}{2^{j}}\}|}{|Q(R/2,\widetilde{m}(R/2)^{p})|}\leq\frac{\widetilde{C}_{0}}{\widetilde{\gamma} j^{\frac{p-1}{p}}}.
	\end{align}
\end{lemma}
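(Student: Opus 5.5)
We follow the proof of Lemma \ref{lem005}, adapted to the intrinsic cylinder $Q(R/2,\widetilde{m}(R/2)^{p})$, whose spatial radius is fixed and whose time length is stretched. We prove only \eqref{Z11}; \eqref{Z13} is symmetric.

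\emph{Level sets.} Put $k_{i}=\tilde{\mu}^{+}-\tilde{\delta}\widetilde{\omega}2^{-i}$ for $0\le i\le j$, and let $A_{i}(t)=B_{R/2}\cap\{u(\cdot,t)>k_{i}\}$ and $A_{i}=Q(R/2,\widetilde{m}(R/2)^{p})\cap\{u>k_{i}\}$. By \eqref{Z10}, $|B_{R/2}\setminus A_{i}(t)|\ge\widetilde{\gamma}|B_{R/2}|$ for every $t$. The isoperimetric inequality \eqref{pro001} then gives
\[
|A_{i+1}(t)|\le \frac{C2^{i}R}{\widetilde{\gamma}\tilde{\delta}\widetilde{\omega}}\int_{A_{i}(t)\setminus A_{i+1}(t)}|\nabla u|\,dx .
\]
Integrating in $t$ and applying H\"older's inequality,
\[
|A_{i+1}|\le \frac{C2^{i}R}{\widetilde{\gamma}\tilde{\delta}\widetilde{\omega}}\Big(\int_{Q(R/2,\widetilde{m}(R/2)^p)}|\nabla(u-k_{i})_{+}|^{p}\Big)^{1/p}|A_{i}\setminus A_{i+1}|^{\frac{p-1}{p}} .
\]

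\emph{Gradient bound.} We bound the gradient term by Lemma \ref{lem003} on $Q(R,\widetilde{m}(R/2)^{p})$, using a cutoff $\zeta(x)$ that depends only on space, equals $1$ on $B_{R/2}$, and satisfies $|\nabla\zeta|\le 4/R$. Because $\zeta$ is time independent, the $\partial_t\zeta$ term vanishes. The initial-time term is replaced, as in \eqref{E26}, by the difference of $\int v_{i}^{2}\zeta$ at the two end times, which Theorem \ref{thm10} controls by $C2^{-i}\tilde{\delta}\widetilde{\omega}R^{n}(\widetilde{m}R^{p})^{\alpha}$. The remaining contributions are
\[
C(2^{-i}\tilde{\delta}\widetilde{\omega})^{p}R^{n-p}\widetilde{m}R^{p}=C(2^{-i}\tilde{\delta}\widetilde{\omega})^{p}\widetilde{m}R^{n}
\quad\text{and}\quad
C(\widetilde{m}R^{n+p})^{1-1/l_{0}} .
\]
We want all three to be at most $C(2^{-i}\tilde{\delta}\widetilde{\omega})^{p}\widetilde{m}R^{n}$. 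Once that holds, $|A_{i+1}|\le C\widetilde{\gamma}^{-1}|A_{i}\setminus A_{i+1}|^{(p-1)/p}|Q|^{1/p}$. Raising to the power $p/(p-1)$ and summing over $i=0,\dots,j-1$ gives $j|A_{j}|^{p/(p-1)}\le C\widetilde{\gamma}^{-p/(p-1)}|Q|^{p/(p-1)}$, which is \eqref{Z11}.

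\emph{Main obstacle: the algebra behind the alternative.} The step we expect to be hardest is showing that if
\[
\widetilde{\omega}>\tilde{\sigma}^{-1}(2^{j}\tilde{\sigma}\tilde{\delta}^{-1})^{\frac{pl_{0}}{p(l_{0}-1)+2}}\mathcal{M}_{\ast}R^{\varepsilon_{0}},
\]
then both absorptions hold. For the source term we need $(\widetilde{m}R^{n+p})^{1-1/l_0}\lesssim (2^{-j}\tilde\delta\widetilde\omega)^p\widetilde m R^n$, i.e.
\[
R^{p(l_0-1)-n}\lesssim (2^{-j}\tilde\delta\widetilde\omega)^{pl_0}\widetilde m .
\]
Insert $\widetilde{m}=(\tilde{\sigma}\widetilde{\omega}/\mathcal{M}_{\ast})^{2-p}$ and write $\widetilde{\omega}=\tilde{\sigma}^{-1}\mathcal{M}_{\ast}X$. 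The condition becomes
\[
X^{p(l_0-1)+2}\gtrsim (2^{j}\tilde\sigma\tilde\delta^{-1})^{pl_0}R^{p(l_0-1)-n}.
\]
This holds when $X\ge(2^{j}\tilde\sigma\tilde\delta^{-1})^{pl_0/(p(l_0-1)+2)}R^{\varepsilon}$, since $\varepsilon\ge\varepsilon_0$ and $R\le1$. The assumption $\tilde{\sigma}\le2^{-j}\tilde{\delta}$ makes the base at most $1$. Consequently the hypothesis also implies $\widetilde{\omega}>\mathcal{M}_{\ast}R^{\varepsilon_{0}}\cdot(\text{small})$, which has to be tracked carefully.

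For the Theorem \ref{thm10} term we need $2^{-i}\tilde\delta\widetilde\omega(\widetilde m R^p)^\alpha\lesssim(2^{-i}\tilde\delta\widetilde\omega)^p\widetilde m$. Rewritten, this is
\[
R^{p\alpha}\lesssim (2^{-i}\tilde\delta\widetilde\omega)^{p-1}\widetilde m^{1-\alpha}.
\]
Here we use $\varepsilon_{0}\le p\alpha/(1+\alpha(p-2))$ together with the same substitution. The resulting powers of $2^{j}\tilde\sigma\tilde\delta^{-1}\le1$ must be checked to be at least as favorable as those in the stated threshold. The constants $C$ (including $\mathcal{M}_\ast\ge1$ and the constant $\mathfrak{G}_\ast$ of Theorem \ref{thm10}) are absorbed into $\widetilde{C}_0$, which is independent of $\tilde\sigma$. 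If the exponent bookkeeping does not close exactly, we would first adjust by enlarging the threshold through the factor $\mathcal{M}_{\ast}$, using \eqref{Z23}.
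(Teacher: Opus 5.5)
Your argument works, but it handles the initial-slice term differently from the paper, and the step you left open is exactly where your route costs more. The paper does not use Theorem \ref{thm10} in this lemma. It bounds $\int_{B_R}(u-k_i)_+^2(x,-\widetilde{m}(R/2)^p)\zeta^p\,dx\le C(2^{-i}\tilde{\delta}\widetilde{\omega})^2R^n$ directly and absorbs it into $C(2^{-i}\tilde{\delta}\widetilde{\omega})^p\widetilde{m}R^n$. For $p>2$, the inequality $\widetilde{m}\ge(2^{-i}\tilde{\delta}\widetilde{\omega})^{2-p}$ is equivalent to $\tilde{\sigma}\le2^{-i}\tilde{\delta}\mathcal{M}_{\ast}$, which follows from $\tilde{\sigma}\le2^{-j}\tilde{\delta}$ and $\mathcal{M}_{\ast}\ge1$. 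That is the actual role of the hypothesis $\tilde{\sigma}\le2^{-j}\tilde{\delta}$: the long intrinsic time scale does the job the temporal H\"older estimate must do in Lemma \ref{lem005}. There $m$ stretches space instead, and direct absorption would fail for small $\sigma$. After this, only the source term has to be matched to the threshold, and your computation of that term is the paper's.

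Your route applies Theorem \ref{thm10} to the energy difference. It also closes, but only because of an exponent inequality you did not verify. Set $b=2^{j}\tilde{\sigma}\tilde{\delta}^{-1}\le1$, $\theta=\frac{pl_0}{p(l_0-1)+2}$ and $\widetilde{\omega}=\tilde{\sigma}^{-1}\mathcal{M}_{\ast}X$, so that $X>b^{\theta}R^{\varepsilon_0}$. Then for $0\le i<j$,
\[
(2^{-i}\tilde{\delta}\widetilde{\omega})^{p-1}\widetilde{m}^{1-\alpha}
=\mathcal{M}_{\ast}^{p-1}(2^{i-j}b)^{1-p}X^{1+(p-2)\alpha}
>b^{\theta(1+(p-2)\alpha)-(p-1)}R^{\varepsilon_0(1+(p-2)\alpha)}
\ge b^{\theta(1+(p-2)\alpha)-(p-1)}R^{p\alpha}.
\]
So you need $\theta(1+(p-2)\alpha)\le p-1$, which makes the power of $b$ nonpositive. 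This is true. Write $s=p-2$. Since $pl_0>n+p$, we have $\theta=1+\frac{s}{pl_0-s}<1+\frac{s}{n+2}$, and since $\beta<\frac1p$, we have $\alpha<\frac{1}{s+3}$. Hence
\[
\theta(1+s\alpha)<\Bigl(1+\frac{s}{n+2}\Bigr)\Bigl(1+\frac{s}{s+3}\Bigr)\le1+s,
\]
where the last step is equivalent to $2s+n+5\le(n+2)(s+3)$.

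Your proposed fallback would not have helped had this failed. No data constant, $\mathcal{M}_{\ast}$ included, can offset a negative net power of $b$, because $b$ can be arbitrarily small. With this check inserted your proof is complete. Its merit is that it runs parallel to the singular case; the paper's direct absorption is shorter and needs no compatibility between $\alpha$ and $l_0$.
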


\begin{proof}
It is enough to establish \eqref{Z11}, since \eqref{Z13} can be proved similarly. For each integer $i\geq0$, write $k_{i}=\tilde{\mu}^{+}-\frac{\tilde{\delta}\widetilde{\omega}}{2^{i}}$, and
	\begin{align*}
		A_{i}(t)=B_{R/2}\cap\{u(\cdot,t)>k_{i}\},\quad A_{i}=Q(R/2,\widetilde{m}(R/2)^{p})\cap\{u>k_{i}\}.
	\end{align*}
Utilizing \eqref{Z10}, it follows that
	\begin{align*}
		|B_{R/2}\setminus A_{i}(t)|\geq\widetilde{\gamma}|B_{R/2}|\geq C(n)\widetilde{\gamma} R^{n}.
	\end{align*}
This, combined with \eqref{pro001}, leads to that
	\begin{align*}
		|A_{i+1}(t)|\leq\frac{C2^{i}R}{\widetilde{\gamma}\tilde{\delta}\widetilde{\omega}}\int_{A_{i}(t)\setminus A_{i+1}(t)}|\nabla u|dx,
	\end{align*}
and thus,
	\begin{align*}
		|A_{i+1}|\leq\frac{C2^{i}R}{\widetilde{\gamma}\tilde{\delta}\widetilde{\omega}}\int_{A_{i}\setminus A_{i+1}}|\nabla u|dxdt.
	\end{align*}	
Using $p>2$ and applying H\"older's inequality, we obtain
	\begin{align*}
		|A_{i+1}|\leq&\frac{C2^{i}R}{\widetilde{\gamma}\tilde{\delta}\widetilde{\omega}}|A_{i}\setminus A_{i+1}|^{\frac{p-1}{p}}\bigg(\int_{Q(R/2,\widetilde{m}(R/2)^{p})}|\nabla (u-k_{i})_{+}|^{p}dxdt\bigg)^{\frac{1}{p}}.
	\end{align*}
Take a time-independent cutoff function $\zeta\in C_{0}^{\infty}(B_{R})$ satisfying the following conditions:
	\begin{align*}
		\zeta=1\;\mathrm{in}\;B_{R/2},\quad\mathrm{and}\;\,0\leq\zeta\leq1,\;|\nabla\zeta|\leq\frac{4}{R}\;\,\mathrm{in}\;B_{R}.
	\end{align*}
For any $0<\tilde{\sigma}\leq2^{-i}\tilde{\delta}$, we have from \eqref{ENE01} and \eqref{Z02} that if
	\begin{align*}
		\widetilde{\omega}>\tilde{\sigma}^{-1}(2^{i}\tilde{\sigma}\tilde{\delta}^{-1})^{\frac{pl_{0}}{p(l_{0}-1)+2}}\mathcal{M}_{\ast}R^{\varepsilon_{0}},
	\end{align*}
	then
	\begin{align*}
		&\int_{Q(R/2,\widetilde{m}(R/2)^{p})}|\nabla (u-k_{i})_{+}|^{p}dxdt\leq\int_{Q(R,\widetilde{m}(R/2)^{p})}|\nabla ((u-k_{i})_{+}\zeta)|^{p}dxdt\notag\\
		&\leq \frac{6}{L_{1}}\int_{B_{R}} (u-k_{i})^{2}_{+}(x,-\widetilde{m}(R/2)^{p})\zeta^{p}(x)dx+C\int_{Q(R,\widetilde{m}(R/2)^{p})}(u-k_{i})_{+}^{p}|\nabla\zeta|^{p}dxdt\notag\\
		&\quad+C|Q(R,\widetilde{m}(R/2)^{p})\cap\{(u-k_{i})_{+}>0\}|^{1-\frac{1}{l_{0}}}\\
		&\leq C\Big(\frac{(\tilde{\delta}\widetilde{\omega})^{2}R^{n}}{2^{2i}}+\frac{(\tilde{\delta}\widetilde{\omega})^{p}\widetilde{m}R^{n}}{2^{pi}}+\big(\widetilde{m}R^{n+p}\big)^{1-\frac{1}{l_{0}}}\Big)\leq \frac{C(\tilde{\delta}\widetilde{\omega})^{p}\widetilde{m}R^{n}}{2^{pi}}.
	\end{align*}
Collecting the above results, we obtain
	\begin{align*}
		|A_{i+1}|\leq&\frac{C}{\widetilde{\gamma}}|A_{i}\setminus A_{i+1}|^{\frac{p-1}{p}}|Q(R/2,\widetilde{m}(R/2)^{p})|^{\frac{1}{p}}.
	\end{align*}
Consequently, for any given $j>i\geq0$ and $0<\tilde{\sigma}\leq2^{-j}\tilde{\delta}$, if
 $$	\widetilde{\omega}>\tilde{\sigma}^{-1}(2^{j}\tilde{\sigma}\tilde{\delta}^{-1})^{\frac{pl_{0}}{p(l_{0}-1)+2}}\mathcal{M}_{\ast}R^{\varepsilon_{0}},$$
 then we deduce
	\begin{align*}
		j|A_{j}|^{\frac{p}{p-1}}\leq&\sum^{j-1}_{i=0}|A_{i+1}|^{\frac{p}{p-1}}\leq\frac{C}{\widetilde{\gamma}^{\frac{p}{p-1}}}|Q(R/2,\widetilde{m}(R/2)^{p})|^{\frac{p}{p-1}}.
	\end{align*}
This establishes \eqref{Z11}.

\end{proof}

\begin{lemma}\label{lemz03}
Let $\mathcal{M}_{\ast}$ and $\tilde{\sigma},\,\widetilde{m}$ be defined as in \eqref{Z20} and \eqref{Z01}--\eqref{Z02}, respectively. Then we can choose a constant $\widetilde{\gamma}_{0}\in(0,1)$, depending only on the prescribed data but not on $\tilde{\sigma}$, such that
	
	$(i)$ if
	\begin{align}\label{Z15}
		|Q(R,\widetilde{m}R^{p})\cap\{u>\tilde{\mu}^{+}-\tilde{\sigma}\widetilde{\omega}\}|\leq\widetilde{\gamma}_{0}|Q(R,\widetilde{m}R^{p})|,
	\end{align}
	then either $\widetilde{\omega}\leq \tilde{\sigma}^{-1}\mathcal{M}_{\ast}R^{\varepsilon_{0}}$, or
	\begin{align}\label{Z16}
		u\leq \tilde{\mu}^{+}-\frac{\tilde{\sigma}\widetilde{\omega}}{2},\quad\mathrm{in}\text{ $Q(R/2,\widetilde{m}(R/2)^{p})$;}
	\end{align}
	
	$(ii)$ if
	\begin{align*}
		|Q(R,\widetilde{m}R^{p})\cap\{u<\tilde{\mu}^{-}+\tilde{\sigma}\widetilde{\omega}\}|\leq\widetilde{\gamma}_{0}|Q(R,\widetilde{m}R^{p})|,
	\end{align*}
	then either $\widetilde{\omega}\leq \tilde{\sigma}^{-1}\mathcal{M}_{\ast}R^{\varepsilon_{0}}$, or
	\begin{align}\label{Z17}
		u\geq \tilde{\mu}^{-}+\frac{\tilde{\sigma}\widetilde{\omega}}{2},\quad\mathrm{in}\text{ $Q(R/2,\widetilde{m}(R/2)^{p})$.}
	\end{align}
	
\end{lemma}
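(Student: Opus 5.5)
We follow the singular-case argument of Lemma \ref{LEM0035}, now in the degenerate geometry $Q(R,\widetilde{m}R^{p})$ and with the parabolic Sobolev inequality of Lemma \ref{lem001}(ii). We prove only \eqref{Z16}; \eqref{Z17} is symmetric. For $i\geq0$ set
\begin{align*}
r_{i}=\frac{R}{2}+\frac{R}{2^{i+1}},\quad k_{i}=\tilde{\mu}^{+}-\tilde{\sigma}\widetilde{\omega}+\frac{\tilde{\sigma}\widetilde{\omega}}{2}(1-2^{-i}),\quad Q_{i}=Q(r_{i},\widetilde{m}r_{i}^{p}).
\end{align*}
Take cutoffs $\xi_{i}$ with $\xi_{i}=1$ on $Q_{i+1}$, $\xi_{i}=0$ on $\partial_{pa}Q_{i}$, $|\nabla\xi_{i}|\leq C2^{i}/R$ and $|\partial_{t}\xi_{i}|\leq C2^{pi}/(\widetilde{m}R^{p})$. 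We apply Lemma \ref{lem003} to $(u-k_{i})_{+}\xi_{i}$ on $Q_{i}$. Since $p>2$ and $0\leq\xi_i\leq1$, we have $\xi_{i}^{p}\leq\xi_{i}^{2}$, so we work with $\xi_{i}^{p}$ in the $L^{2}$ term exactly as it appears in \eqref{ENE01}. This gives
\begin{align*}
&\sup_{t}\int_{B_{r_{i}}}(u-k_{i})_{+}^{2}\xi_{i}^{p}\,dx+\int_{Q_{i}}|\nabla((u-k_{i})_{+}\xi_{i})|^{p}\\
&\leq C\Big(\frac{2^{pi}(\tilde{\sigma}\widetilde{\omega})^{2}}{\widetilde{m}R^{p}}+\frac{2^{pi}(\tilde{\sigma}\widetilde{\omega})^{p}}{R^{p}}\Big)|A_{i}|+C|A_{i}|^{1-\frac{1}{l_{0}}},
\end{align*}
where $A_{i}=Q_{i}\cap\{u>k_{i}\}$.

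By \eqref{Z02}, $(\tilde{\sigma}\widetilde{\omega})^{2}/\widetilde{m}=(\tilde{\sigma}\widetilde{\omega})^{p}\mathcal{M}_{\ast}^{2-p}\leq(\tilde{\sigma}\widetilde{\omega})^{p}$, so the first two terms combine into $C2^{pi}(\tilde{\sigma}\widetilde{\omega})^{p}R^{-p}|A_{i}|$. To absorb the forcing term $|A_{i}|^{1-1/l_{0}}$, we write $|A_{i}|\leq|A_{i}|^{1-1/l_{0}}(\widetilde{m}R^{n+p})^{1/l_{0}}$. It then suffices that
\begin{align*}
(\widetilde{m}R^{n+p})^{1/l_{0}}\lesssim(\tilde{\sigma}\widetilde{\omega})^{p}R^{-p}\cdot(\widetilde{m}R^{n+p})^{1/l_{0}},
\end{align*}
which amounts to checking $R^{p}\lesssim(\tilde{\sigma}\widetilde{\omega})^{p}\cdot\big((\tilde{\sigma}\widetilde{\omega})^{2-p}\mathcal{M}_{\ast}^{p-2}R^{n+p}\big)^{1/l_{0}}$, i.e. comparing powers of $\tilde{\sigma}\widetilde{\omega}$ against $R^{\varepsilon_{0}}$. \textbf{This exponent bookkeeping is the main obstacle.} The alternative $\widetilde{\omega}>\tilde{\sigma}^{-1}\mathcal{M}_{\ast}R^{\varepsilon_{0}}$ gives $(\tilde{\sigma}\widetilde{\omega})^{p-(p-2)/l_{0}}\geq\mathcal{M}_{\ast}^{\cdots}R^{\varepsilon_{0}(p(l_{0}-1)+2)/l_{0}}$. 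Using $\varepsilon_{0}\leq\varepsilon=\frac{p(l_{0}-1)-n}{p(l_{0}-1)+2}$, this dominates $R^{(p(l_{0}-1)-n)/l_{0}}$, which is exactly what is needed; \eqref{Z23} fixes the $\mathcal{M}_{\ast}$ powers. With this, the energy bound becomes
\begin{align*}
C2^{pi}(\tilde{\sigma}\widetilde{\omega})^{p}R^{-p}(\widetilde{m}R^{n+p})^{1/l_{0}}|A_{i}|^{1-1/l_{0}}.
\end{align*}

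Next we apply Lemma \ref{lem001}(ii) to $(u-k_{i})_{+}\xi_{i}$. The sup term there is in $L^{p}$, not $L^{2}$; since $(u-k_{i})_{+}\leq\tilde{\sigma}\widetilde{\omega}$, we bound $\int(u-k_{i})_{+}^{p}\xi_{i}^{p}\leq(\tilde{\sigma}\widetilde{\omega})^{p-2}\int(u-k_{i})_{+}^{2}\xi_{i}^{p}$. Alternatively, we rescale time by $\widetilde{m}$ so that the sup term carries the factor $\widetilde{m}^{-1}(\tilde{\sigma}\widetilde{\omega})^{p-2}=\mathcal{M}_{\ast}^{p-2}$, a data constant; this is the reason $\widetilde{m}$ was chosen in \eqref{Z02}. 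This yields
\begin{align*}
\Big(\frac{\tilde{\sigma}\widetilde{\omega}}{2^{i+2}}\Big)^{p}|A_{i+1}|\leq C\big(\text{energy}\big)\,|A_{i}|^{1-\frac{1}{\chi}}\cdot(\text{scale factors}).
\end{align*}
Normalizing by $H_{i}=|A_{i}|/|Q(R,\widetilde{m}R^{p})|$, the powers of $\tilde{\sigma}\widetilde{\omega}$, $R$ and $\widetilde{m}$ cancel by the intrinsic scaling, and we get
\begin{align*}
H_{i+1}\leq Cb^{i}H_{i}^{1+\kappa_{1}},\quad\kappa_{1}=1-\frac{1}{l_{0}}-\frac{1}{\chi}=\frac{p}{n+p}-\frac{1}{l_{0}}>0,
\end{align*}
positive because $l_{0}>\frac{n+p}{p}$. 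The standard fast geometric convergence lemma (the same iteration as in Lemma \ref{LEM0035}) gives $H_{i}\to0$ once $H_{0}\leq\widetilde{\gamma}_{0}:=C^{-1/\kappa_{1}}b^{-1/\kappa_{1}^{2}}$. These constants depend only on the data, not on $\tilde{\sigma}$, since $\tilde{\sigma}$ enters only through $\tilde{\sigma}\widetilde{\omega}$ and $\widetilde{m}$, which cancel. Hence $u\leq\tilde{\mu}^{+}-\tilde{\sigma}\widetilde{\omega}/2$ on $Q(R/2,\widetilde{m}(R/2)^{p})$, which is \eqref{Z16}.
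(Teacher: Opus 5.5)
Your outline follows the paper's proof: the same $r_i$, $k_i$ and cutoffs, and the same absorption of the forcing term $|A_i|^{1-1/l_0}$ via $\widetilde{\omega}>\tilde{\sigma}^{-1}\mathcal{M}_{\ast}R^{\varepsilon_{0}}$ and $\varepsilon_{0}\leq\varepsilon$ (that bookkeeping is correct). Then come Lemma \ref{lem001}(ii) and fast geometric convergence. Using H\"older to produce $|A_i|^{1-1/\chi}$, instead of the paper's direct lower bound $(k_{i+1}-k_i)^p|\hat{A}_{i+1}|^{1/\chi}$, only changes the recursion exponent from $\chi(1-\frac{1}{l_0})$ to $2-\frac{1}{\chi}-\frac{1}{l_0}$; both exceed $1$ exactly when $l_0>\frac{n+p}{p}$.

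The gap is in the step you compress into ``the powers of $\tilde\sigma\widetilde\omega$, $R$ and $\widetilde m$ cancel by the intrinsic scaling''. This is exactly where the $\tilde\sigma$-independence of $\widetilde\gamma_0$ is decided. You name the right ingredient, rescaling time by $\widetilde m$, but you offer it as an \emph{alternative} to the bound $(u-k_i)_+^p\leq(\tilde\sigma\widetilde\omega)^{p-2}(u-k_i)_+^2$, and that bound alone does not work. If you apply the additive inequality of Lemma \ref{lem001}(ii) in the original time variable, the gradient term is not improved, so $\|(u-k_i)_+\xi_i\|^p_{L^{p\chi}(Q_i)}\leq C\mathcal{E}_i$, with $\mathcal{E}_i$ your energy bound. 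Normalizing by $|Q(R,\widetilde mR^p)|\sim\widetilde mR^{n+p}$ then leaves
\[
H_{i+1}\leq C4^{pi}\,\widetilde{m}^{\frac{p}{n+p}}H_i^{1+\kappa_1}.
\]
Since $\widetilde m=(\tilde\sigma\widetilde\omega/\mathcal{M}_\ast)^{2-p}\to\infty$ as $\tilde\sigma\to0$ (under the alternative it can be as large as $R^{-(p-2)\varepsilon_0}$), $\widetilde\gamma_0$ would depend on $\tilde\sigma$ and $R$. That is fatal, because in Corollary \ref{coroz01}, as in Corollary \ref{AMZW01}, $\tilde\sigma$ is fixed only after $\widetilde\gamma_0$.

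The two devices must be combined, as in the paper. First bound the $L^p$-sup term by $(\tilde\sigma\widetilde\omega)^{p-2}\mathcal E_i$, \emph{and} set $\hat t=t/\widetilde m$, so that the gradient term becomes $\widetilde m^{-1}\iint|\nabla((u-k_i)_+\xi_i)|^p\,dx\,dt\leq C\mathcal{M}_\ast^{2-p}(\tilde\sigma\widetilde\omega)^{p-2}\mathcal E_i$. Now insert three facts: $\mathcal E_i\leq C2^{pi}(\tilde\sigma\widetilde\omega)^pR^{-p}(\widetilde mR^{n+p})^{1/l_0}|A_i|^{1-1/l_0}$, $|A_i|=\widetilde m|\hat A_i|$, and the identity $\widetilde m(\tilde\sigma\widetilde\omega)^{p-2}=\mathcal M_\ast^{p-2}$ (not $\widetilde m^{-1}(\tilde\sigma\widetilde\omega)^{p-2}$ as you wrote). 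This gives the paper's bound $C2^{pi}(\tilde\sigma\widetilde\omega)^pR^{-\frac{p(l_0-1)-n}{l_0}}|\hat A_i|^{1-1/l_0}$ for the right-hand side of Lemma \ref{lem001}(ii). Only then do all powers cancel, giving $H_{i+1}\leq C4^{pi}H_i^{1+\kappa_1}$ with $C$ independent of $\tilde\sigma$ and $R$.

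Equivalently, you could stay in the original time variable and use the scale-invariant multiplicative form $\iint|w|^{p\chi}\leq C\iint|\nabla w|^p(\sup_t\int|w|^p)^{p/n}$. It produces only the harmless factor $\big((\tilde\sigma\widetilde\omega)^{p-2}\widetilde m\big)^{\frac{p}{n+p}}=\mathcal M_\ast^{\frac{(p-2)p}{n+p}}$.
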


\begin{proof}
We restrict ourselves to proving \eqref{Z16}, since the argument for \eqref{Z17} is analogous. For any integer $i\geq0$, denote
	\begin{align*}
		r_{i}=\frac{R}{2}+\frac{R}{2^{i+1}},\quad k_{i}=\tilde{\mu}^{+}-\tilde{\sigma}\widetilde{\omega}+\frac{\tilde{\sigma}\widetilde{\omega}}{2}(1-2^{-i}).
	\end{align*}
Fix a smooth cutoff function $\xi_{i}\in C^{\infty}(Q(r_{i},\widetilde{m}r_{i}^{p}))$ with the following properties:\ $0\leq\xi_{i}\leq1$ in $Q(r_{i},\widetilde{m}r_{i}^{p})$, $\xi_{i}=1$ in $Q(r_{i+1},\widetilde{m}r_{i+1}^{p})$, $\xi_{i}=0$ on $\partial_{pa}Q(r_{i},\widetilde{m}r_{i}^{p}),$ and
	\begin{align*}
		|\nabla\xi_{i}|\leq\frac{2^{i+3}}{R},\quad|\partial_{t}\xi_{i}|\leq\frac{2}{\widetilde{m}(r_{i}^{p}-r_{i+1}^{p})}\leq\frac{C2^{i}}{\widetilde{m}R^{p}},\quad \mathrm{in}\;Q(r_{i},\widetilde{m}r_{i}^{p}).
	\end{align*}
Since $p>2$, a consequence of Lemma \ref{lem003} together with \eqref{Z02} shows that if $\widetilde{\omega}>\tilde{\sigma}^{-1}\mathcal{M}_{\ast}R^{\varepsilon_{0}}$,
	\begin{align}\label{Z18}
		&\sup\limits_{t\in(-\widetilde{m}r_{i}^{p},0)}(\tilde{\sigma}\widetilde{\omega})^{2-p}\int_{B_{r_{i}}}(u-k_{i})_{+}^{p}\xi_{i}^{p}dx+\frac{L_{1}}{6}\int_{Q(r_{i},\widetilde{m}r_{i}^{p})}|\nabla((u-k_{i})_{+}\xi_{i})|^{p}dxdt\notag\\
		&\leq\sup\limits_{t\in(-\widetilde{m}r_{i}^{p},0)}\int_{B_{r_{i}}}(u-k_{i})_{+}^{2}\xi_{i}^{p}dx+\frac{L_{1}}{6}\int_{Q(r_{i},\widetilde{m}r_{i}^{p})}|\nabla((u-k_{i})_{+}\xi_{i})|^{p}dxdt\notag\\
		&\leq C\int_{Q(r_{i},\widetilde{m}r_{i}^{p})}\big((u-k_{i})_{+}^{2}|\partial_{t}\xi_{i}|+(u-k_{i})_{+}^{p}|\nabla\xi_{i}|^{p}\big)dxdt\notag\\
		&\quad+C|Q(r_{i},\widetilde{m}r_{i}^{p})\cap\{u>k_{i}\}|^{1-\frac{1}{l_{0}}}\notag\\
		&\leq C\Big(\frac{2^{pi}(\tilde{\sigma}\widetilde{\omega})^{p}}{R^{p}}(\widetilde{m}R^{n+p})^{\frac{1}{l_{0}}}+1\Big)|Q(r_{i},\widetilde{m}r_{i}^{p})\cap\{u>k_{i}\}|^{1-\frac{1}{l_{0}}}\notag\\
		&\leq \frac{C2^{pi}(\tilde{\sigma}\widetilde{\omega})^{\frac{p(l_{0}-1)+2}{l_{0}}}}{R^{\frac{p(l_{0}-1)-n}{l_{0}}}}|Q(r_{i},\widetilde{m}r_{i}^{p})\cap\{u>k_{i}\}|^{1-\frac{1}{l_{0}}}.
	\end{align}
Write $\hat{u}(x,\hat{t})=u(x,\widetilde{m}\hat{t})$, $\hat{\xi}_{i}(x,\hat{t})=\xi_{i}(x,\widetilde{m}\hat{t})$, and
	\begin{align*}
		\hat{A}_{i}(\hat{t})=B_{r_{i}}\cap\{\hat{u}(\cdot,\hat{t})>k_{i}\},\quad\hat{A}_{i}=Q(r_{i},r_{i}^{p})\cap\{\hat{u}>k_{i}\}.
	\end{align*}
As a consequence of Lemma \ref{lem001} and \eqref{Z18}, we derive
	\begin{align*}
		\frac{(\tilde{\sigma}\widetilde{\omega})^{p}}{2^{p(i+2)}}|\hat{A}_{i+1}|^{\frac{1}{\chi}}=&(k_{i+1}-k_{i})^{p}|\hat{A}_{i+1}|^{\frac{1}{\chi}}\leq\|(\hat{u}_{i}-k_{i})_{+}\hat{\xi}_{i}\|^{p}_{L^{p\chi}(Q(r_{i},r_{i}^{p}))}\notag\\
		\leq&\sup\limits_{\hat{t}\in(-r_{i}^{p},0)}\int_{B_{r_{i}}}(\hat{u}-k_{i})_{+}^{p}\hat{\xi}_{i}^{p}dx+\int_{Q(r_{i},r_{i}^{p})}|\nabla((\hat{u}-k_{i})_{+}\hat{\xi}_{i})|^{p}dxd\hat{t}\notag\\
		\leq& \frac{C2^{pi}(\tilde{\sigma}\widetilde{\omega})^{p}}{R^{\frac{p(l_{0}-1)-n}{l_{0}}}}|Q(r_{i},r_{i}^{p})\cap\{\hat{u}>k_{i}\}|^{1-\frac{1}{l_{0}}},
	\end{align*}
	where $\chi=\frac{n+p}{n}$. Then if $\widetilde{\omega}>\tilde{\sigma}^{-1}\mathcal{M}_{\ast}R^{\varepsilon_{0}}$, we have
	\begin{align*}
		|\hat{A}_{i+1}|\leq&\bigg(\frac{C4^{pi}}{R^{\frac{p(l_{0}-1)-n}{l_{0}}}}|\hat{A}_{i}|^{1-\frac{1}{l_{0}}}\bigg)^{\chi}.
	\end{align*}
	Write
	\begin{align*}
		H_{i}=\frac{|\hat{A}_{i}|}{|Q(R,R^{p})|},\quad\text{for } i\geq0.
	\end{align*}
Hence we derive
\begin{align*}
	H_{i+1}\leq&(C4^{pi})^{\chi} H_{i}^{\frac{\chi(l_{0}-1)}{l_{0}}}\leq\prod\limits^{i}_{s=0}\big[(C4^{p(i-s)})^{\chi}\big]^{\big(\frac{\chi(l_{0}-1)}{l_{0}}\big)^{s}}H_{0}^{\left(\frac{\chi(l_{0}-1)}{l_{0}}\right)^{i+1}}\notag\\
	\leq&(\widetilde{C}_{\ast}H_{0})^{\left(\frac{\chi(l_{0}-1)}{l_{0}}\right)^{i+1}},
\end{align*}
where $\widetilde{C}_{\ast}>0$ depends solely on the prescribed data and not on $\tilde{\sigma}$. Applying \eqref{Z15} with $\widetilde{\gamma}_{0} =(2\widetilde{C}_{\ast})^{-1}$, we obtain
\begin{align*}
	H_{i+1}\leq2^{-\left(\frac{\chi(l_{0}-1)}{l_{0}}\right)^{i+1}}\longrightarrow0,\quad\mathrm{as}\;i\rightarrow\infty.
\end{align*}
The desired conclusion now follows.

\end{proof}

Following the proof of Corollary \ref{AMZW01} with minor modifications and using Lemmas \ref{lemz01}, \ref{lemz02} and \ref{lemz03}, we directly obtain the following result.
\begin{corollary}\label{coroz01}
The constants $\tilde{\sigma}$ and $\tilde{A}$ can be chosen first, followed by $\tilde{a}_{0}$ and $\widetilde{m}$, together with a constant $\tilde{\kappa}_{\ast}>1$, all depending solely on the prescribed data, such that
	
	$(i)$ if
	\begin{align*}
		\frac{|B_{R/2}\cap\{u(\cdot,-\widetilde{m}(R/2)^{p})>\tilde{\mu}^{+}-2^{-1}\widetilde{\omega}\}|}{|B_{R/2}|}\leq\frac{1}{2},
	\end{align*}
	then we obtain either $\widetilde{\omega}\leq \tilde{\sigma}^{-1}\tilde{A}R^{\varepsilon_{0}}=2^{\tilde{\kappa}_{\ast}}\tilde{A}R^{\varepsilon_{0}}$, or
	\begin{align*}
		\sup\limits_{Q(R/4,\widetilde{m}(R/4)^{p})}u\leq \tilde{\mu}^{+}-\frac{\widetilde{\omega}}{2^{\tilde{\kappa}_{\ast}+1}};
	\end{align*}
	
	$(ii)$ if
	\begin{align*}
		\frac{|B_{R/2}\cap\{u(\cdot,-\widetilde{m}(R/2)^{p})<\tilde{\mu}^{-}+2^{-1}\widetilde{\omega}\}|}{|B_{R/2}|}\leq\frac{1}{2},
	\end{align*}
	then we obtain either $\widetilde{\omega}\leq \tilde{\sigma}^{-1}\tilde{A}R^{\varepsilon_{0}}=2^{\tilde{\kappa}_{\ast}}\tilde{A}R^{\varepsilon_{0}}$, or
	\begin{align*}
		\inf\limits_{Q(R/4,\widetilde{m}(R/4)^{p})}u\geq \tilde{\mu}^{-}+\frac{\widetilde{\omega}}{2^{\tilde{\kappa}_{\ast}+1}}.
	\end{align*}
	
\end{corollary}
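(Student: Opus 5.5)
The plan is to mirror the proof of Corollary \ref{AMZW01}, now in the intrinsic geometry $Q(R,\widetilde{m}R^{p})$ of the degenerate case. We prove only (i); (ii) follows by the symmetric argument applied to $\tilde{\mu}^{-}$.

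\textbf{Step 1: propagation.} Apply Lemma \ref{lemz01} on the half-size cylinder, with $R$ replaced by $R/2$, so that its starting time is $-\widetilde{m}(R/2)^{p}$ and the hypothesis of (i) is exactly \eqref{Z05} at that scale. Because $\widetilde{m}$ and $\tilde{\mu}^{\pm}$ are defined on the larger cylinder, the time-slice measure information is only needed on $[-\widetilde{m}(R/2)^{p},0]$. Restricting Lemmas \ref{lemz02} and \ref{lemz03} to the subcylinders $Q(R/2,\widetilde{m}(R/2)^{p})\supset Q(R/4,\widetilde{m}(R/4)^{p})$ is harmless: their proofs use only energy estimates and the temporal bound of Theorem \ref{thm10}, and these are insensitive to this rescaling. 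For $\tilde{\sigma}<\tilde{\sigma}_{0}$, Lemma \ref{lemz01} then gives either $\widetilde{\omega}\le\tilde{\sigma}^{-1}\mathcal{M}_{\ast}R^{\varepsilon_{0}}$ or the bound $4/5$ on $\{u(\cdot,t)>\tilde{\mu}^{+}-2^{-(\kappa_{0}+1)}\widetilde{\omega}\}$ for every $t$ in the time range.

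\textbf{Step 2: measure decay.} Fix $j_{0}>3$ large enough that
\[
2^{-(\kappa_{0}+1+j_{0})}\le\tilde{\sigma}_{0},\qquad 5\widetilde{C}_{0}j_{0}^{-\frac{p-1}{p}}\le\widetilde{\gamma}_{0}.
\]
Set $\tilde{\delta}=2^{-(\kappa_{0}+1)}$, $\widetilde{\gamma}=1/5$, $j=j_{0}$ and $\tilde{\sigma}=2^{-(\kappa_{0}+1+j_{0})}$. This $\tilde{\sigma}$ satisfies the constraint $\tilde{\sigma}\le2^{-j}\tilde{\delta}$ of Lemma \ref{lemz02} with equality, so $2^{j}\tilde{\sigma}\tilde{\delta}^{-1}=1$. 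The alternative in Lemma \ref{lemz02} therefore collapses to $\widetilde{\omega}\le\tilde{\sigma}^{-1}\mathcal{M}_{\ast}R^{\varepsilon_{0}}$. Otherwise Lemma \ref{lemz02} gives
\[
\frac{|Q(R/2,\widetilde{m}(R/2)^{p})\cap\{u>\tilde{\mu}^{+}-\tilde{\sigma}\widetilde{\omega}\}|}{|Q(R/2,\widetilde{m}(R/2)^{p})|}\le\widetilde{\gamma}_{0}.
\]

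\textbf{Step 3: measure-to-point.} Apply Lemma \ref{lemz03} at scale $R/2$ with this $\tilde{\sigma}$. It yields either the small-oscillation alternative or
\[
u\le\tilde{\mu}^{+}-\tilde{\sigma}\widetilde{\omega}/2=\tilde{\mu}^{+}-\widetilde{\omega}2^{-(\tilde{\kappa}_{\ast}+1)}\quad\text{in }Q(R/4,\widetilde{m}(R/4)^{p}),
\]
where $\tilde{\kappa}_{\ast}=\kappa_{0}+1+j_{0}$. Finally set $\tilde{A}=\mathcal{M}_{\ast}$. Every alternative encountered above then reads $\widetilde{\omega}\le\tilde{\sigma}^{-1}\tilde{A}R^{\varepsilon_{0}}=2^{\tilde{\kappa}_{\ast}}\tilde{A}R^{\varepsilon_{0}}$. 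Moreover $\tilde{A}\ge\mathcal{M}_{\ast}$ guarantees $Q(R,\widetilde{m}R^{p})\subset Q(R,\tilde{a}_{0}R^{p})$, so $\tilde{a}_{0}$ and $\widetilde{m}$ are now fixed.

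\textbf{Main obstacle.} The delicate point is consistency of the time scales at radius $R/2$ versus $R$. Lemma \ref{lemz01} needs $\widetilde{\omega}\mathfrak{G}_{\ast}(\widetilde{m}R^{p})^{\alpha}\le\widetilde{\omega}^{2}/16$, and this in turn needs $\varepsilon_{0}\le p\alpha/(1+\alpha(p-2))$. The choice in \eqref{EXP01} is designed exactly for this, and shrinking $R$ to $R/2$ only helps. One must also check that $\tilde{\sigma}$ does not enter $\widetilde{C}_{0}$, $\widetilde{\gamma}_{0}$ or $\tilde{\sigma}_{0}$; the lemmas assert this, which is what makes the choice of $j_{0}$ non-circular.
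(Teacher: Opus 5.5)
Your proposal is correct and follows the paper's route. The paper proves Corollary \ref{coroz01} by repeating the proof of Corollary \ref{AMZW01} with Lemmas \ref{lemz01}--\ref{lemz03}, using the same choices $\tilde{\delta}=2^{-(\kappa_{0}+1)}$, $\widetilde{\gamma}=1/5$, $j=j_{0}$, $\tilde{\sigma}=2^{-(\kappa_{0}+1+j_{0})}$ and $\tilde{\kappa}_{\ast}=\kappa_{0}+1+j_{0}$. Your observation that $2^{j_{0}}\tilde{\sigma}\tilde{\delta}^{-1}=1$ collapses the alternative of Lemma \ref{lemz02}, so that $\tilde{A}=\mathcal{M}_{\ast}$ (or any $\tilde{A}\geq\mathcal{M}_{\ast}$) suffices, is exactly the ``minor modification'' relative to the singular case; your explicit use of Lemmas \ref{lemz01} and \ref{lemz03} at scale $R/2$ simply spells out bookkeeping the paper leaves implicit.
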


We are now in a position to prove Theorem \ref{ZWTHM90} in the degenerate case.
\begin{proof}[Proof of Theorem \ref{ZWTHM90} in the case when $p>2$]
With Corollary \ref{coroz01} at hand, we establish Theorem \ref{ZWTHM90} in the case of $p>2$ by using the same argument as in Section 4.1 of \cite{HLZ202602}.
\end{proof}

%

\noindent{\bf{\large Acknowledgements.}} Z. Zhao was partially supported by NSFC (No.\ 12501254).




\begin{thebibliography}{99}

\bibitem{AL1983} H.W. Alt and S. Luckhaus, Quasilinear elliptic-parabolic differential equations. Math. Z. 183 (1983), no. 3, 311-341.

\bibitem{ACM2004} F. Andreu-Vaillo, V. Caselles and J.M. Maz\'{o}n, Parabolic quasilinear equations minimizing linear growth functionals.
Progress in Mathematics, 223. Birkh\"{a}user Verlag, Basel, 2004.

\bibitem{ABR2001} S. Axler, P. Bourdon and W. Ramey, Harmonic function theory. Second edition. Graduate Texts in Mathematics, 137. Springer-Verlag, New York, 2001.

\bibitem{BV2004} G.I. Barenblatt and J.L. V\'{a}zquez, Nonlinear diffusion and image contour enhancement. Interfaces Free Bound. 6 (2004), no. 1, 31-54.

\bibitem{BDMS2018} V. B\"{o}gelein, F. Duzaar, P. Marcellini and C. Scheven, Doubly nonlinear equations of porous medium type. Arch. Ration. Mech. Anal. 229 (2018), no. 2, 503-545.

\bibitem{BDL2021} V. B\"{o}gelein, F. Duzaar and N. Liao, On the H\"{o}lder regularity of signed solutions to a doubly nonlinear equation. J. Funct. Anal. 281 (2021), no. 9, Paper No. 109173, 58 pp.

\bibitem{BDGLS202301} V. B\"{o}gelein, F. Duzaar, N. Liao and L. Sch\"{a}tzler, On the H\"{o}lder regularity of signed solutions to a doubly nonlinear equation. Part II. Rev. Mat. Iberoam. 39 (2023), no. 3, 1005-1037.

\bibitem{BDGLS202302} V. B\"{o}gelein, F. Duzaar, U. Gianazza, N. Liao and C. Scheven, H\"{o}lder Continuity of the Gradient of Solutions to Doubly Non-Linear Parabolic Equations. arXiv:2305.08539.

\bibitem{BDGLS2026} V. B\"{o}gelein, F. Duzaar, U. Gianazza, N. Liao and C. Scheven, Schauder estimates for parabolic $p$-Laplace systems. J. Lond. Math. Soc. (2) 113 (2026), no. 3, Paper No. e70463, 100 pp. 

\bibitem{BFV2018} M. Bonforte, A. Figalli and J.L. V\'{a}zquez, Sharp global estimates for local and nonlocal porous medium-type equations in bounded domains. Anal. PDE 11 (2018), no. 4, 945-982.

\bibitem{BSS2022} M. Bonforte, N. Simonov and D. Stan, The Cauchy problem for the fast $p$-Laplacian evolution equation. Characterization of the global Harnack principle and fine asymptotic behaviour. J. Math. Pures Appl. (9) 163 (2022), 83-131.

\bibitem{B2011} H. Brezis, Functional analysis, Sobolev spaces and partial differential equations. Universitext. Springer, New York, 2011.

\bibitem{CD1988} Y.Z. Chen and E. DiBenedetto, On the local behavior of solutions of singular parabolic equations. Arch. Rational Mech. Anal. 103 (1988), no. 4, 319-345.

\bibitem{C1991} H.J. Choe, H\"{o}lder regularity for the gradient of solutions of certain singular parabolic systems. Comm. Partial Differential Equations 16 (1991), no. 11, 1709-1732.

\bibitem{CM2011} A. Cianchi and V.G. Maz'ya, Global Lipschitz regularity for a class of quasilinear elliptic equations. Comm. Partial Differential Equations 36 (2011), no. 1, 100–133.

\bibitem{CM2014} A. Cianchi and V.G. Maz'ya, Global boundedness of the gradient for a class of nonlinear elliptic systems. Arch. Ration. Mech. Anal. 212 (2014), no. 1, 129–177.

\bibitem{DK2007} P. Daskalopoulos and C. Kenig, Degenerate diffusions. Initial value problems and local regularity theory. EMS Tracts in Mathematics, 1. European Mathematical Society (EMS), Z\"{u}rich, 2007.

\bibitem{D1957} E. De Giorgi, Sulla differenziabilit$\grave{\mathrm{a}}$ e l'analiticit$\grave{\mathrm{a}}$ delle estremali degli integrali multipli regolari. Mem. Accad. Sci. Torino. Cl. Sci. Fis. Mat. Nat. (3) 3 (1957), 25-43.

\bibitem{D1983} E. DiBenedetto, $C^{1+\alpha}$ local regularity of weak solutions of degenerate elliptic equations. Nonlinear Anal. 7 (1983), no. 8, 827-850.

\bibitem{DF1984} E. DiBenedetto and A. Friedman, Regularity of solutions of nonlinear degenerate parabolic systems. J. Reine Angew. Math. 349 (1984), 83-128.

\bibitem{DF198501} E. DiBenedetto and A. Friedman, H\"{o}lder estimates for nonlinear degenerate parabolic systems. J. Reine Angew. Math. 357 (1985), 1-22.

\bibitem{DF198502} E. DiBenedetto and A. Friedman, Addendum to: ``H\"{o}lder estimates for nonlinear degenerate parabolic systems''. J. Reine Angew. Math. 363 (1985), 217-220.

\bibitem{D1986} E. DiBenedetto, On the local behaviour of solutions of degenerate parabolic equations with measurable coefficients. Ann. Scuola Norm. Sup. Pisa Cl. Sci. (4) 13 (1986), no. 3, 487-535.

\bibitem{DKV1991} E. DiBenedetto, Y.C. Kwong and V. Vespri, Local space-analyticity of solutions of certain singular parabolic equations. Indiana Univ. Math. J. 40 (2) (1991), 741-765.

\bibitem{DK1992} E. DiBenedetto and Y.C. Kwong, Harnack estimates and extinction profile for weak solution of certain singular parabolic equations. Trans. Amer. Math. Soc. 330 (2) (1992), 783-811.

\bibitem{D1993} E. DiBenedetto, Degenerate parabolic equations. Universitext. Springer-Verlag, New York, 1993.

\bibitem{DGV2008} E. DiBenedetto, U. Gianazza and V. Vespri, Harnack estimates for quasi-linear degenerate parabolic differential equations. Acta Math. 200 (2008), no. 2, 181-209.

\bibitem{DGV200802} E. DiBenedetto, U. Gianazza and V. Vespri, Subpotential lower bounds for nonnegative solutions to certain quasi-linear degenerate parabolic equations. Duke Math. J. 143 (2008), no. 1, 1-15.

\bibitem{DGV2012} E. DiBenedetto, U. Gianazza and V. Vespri, Harnack's inequality for degenerate and singular parabolic equations. Springer Monographs in Mathematics. Springer, New York, 2012.

\bibitem{DLK2013} L. Diening, P. Lindqvist and B. Kawohl, Mini-Workshop:\ The $p$-Laplacian operator and applications. Oberwolfach Rep. 10 (2013), no. 1, 433-482.

\bibitem{DZ2022} H.J. Dong and H.Y. Zhu, Gradient estimates for singular parabolic $p$-Laplace type equations with measure data. Calc. Var. Partial Differential Equations 61 (2022), no. 3, Paper No. 86, 41 pp.

\bibitem{DZ2024} H.J. Dong and H.Y. Zhu, Gradient estimates for singular $p$-Laplace type equations with measure data. J. Eur. Math. Soc. (JEMS) 26 (2024), no. 10, 3939–3985.

\bibitem{DM2010} F. Duzaar and G. Mingione, Gradient estimates via linear and nonlinear potentials. J. Funct. Anal. 259 (2010), no. 11, 2961–2998. 

\bibitem{DM201002} F. Duzaar and G. Mingione, Local Lipschitz regularity for degenerate elliptic systems. Ann. Inst. H. Poincar\'{e} C Anal. Non Lin\'{e}aire 27 (2010), no. 6, 1361–1396. 

\bibitem{DM2011} F. Duzaar and G. Mingione, Gradient estimates via non-linear potentials. Amer. J. Math. 133 (2011), no. 4, 1093–1149.

\bibitem{E1982} L.C. Evans, A new proof of local $C^{1,\alpha}$ regularity for solutions of certain degenerate elliptic P.D.E. J. Differential Equations 45 (1982), no. 3, 356-373.

\bibitem{G1974} H.C. Greenwald, Lipschitz spaces on the surface of the unit sphere in Euclidean $n$-space. Pacific J. Math. 50 (1974), 63–80.

\bibitem{HLZ202601} X. Hao, Y. Li and Z.W. Zhao, Local behavior of solutions to inhomogeneous singular parabolic $p$-Laplace equations. arXiv:2609.18815.

\bibitem{HLZ202602} X. Hao, Y. Li and Z.W. Zhao, Regularity for solutions to inhomogeneous degenerate parabolic $p$-Laplace equations. arXiv:2610.01724.

\bibitem{I1989} A.V. Ivanov, H\"{o}lder estimates for quasilinear doubly degenerate parabolic equations. Zap. Nauchn. Sem. Leningrad. Otdel. Mat. Inst. Steklov. (LOMI) 171 (1989), Kraev. Zadachi Mat. Fiz. i Smezh. Voprosy Teor. Funktsi{\tiny{\u{l}}}. 20, 70-105, 185; translation in J. Soviet Math. 56 (1991), no. 2, 2320-2347.

\bibitem{I199401} A.V. Ivanov, H\"{o}lder estimates for equations of slow and normal diffusion type. Zap. Nauchn. Sem. S.-Peterburg. Otdel. Mat. Inst. Steklov. (POMI) 215 (1994), Differentsial'naya Geom. Gruppy Li i Mekh. 14, 130-136, 311; translation in J. Math. Sci. (New York) 85 (1997), no. 1, 1640-1644.

\bibitem{I199402} A.V. Ivanov, Regularity for doubly nonlinear parabolic equations. Zap. Nauchn. Sem. S.-Peterburg. Otdel. Mat. Inst. Steklov. (POMI) 209 (1994), Voprosy Kvant. Teor. Polya i Statist. Fiz. 12, 37-59, 261; reprinted in J. Math. Sci. 83 (1997), no. 1, 22-37.

\bibitem{I1995} A.V. Ivanov, H\"{o}lder estimates for a natural class of equations of fast diffusion type. Zap. Nauchn. Sem. S.-Peterburg. Otdel. Mat. Inst. Steklov. (POMI) 229 (1995), Chisl. Metody i Voprosy Organ. Vychisl. 11, 29-62, 322; translation in J. Math. Sci. (New York) 89 (1998), no. 6, 1607-1630.

\bibitem{JX2019} T.L. Jin and J.G. Xiong, Optimal boundary regularity for fast diffusion equations in bounded domains. Amer. J. Math. 145 (2023), no. 1, 151-219.

\bibitem{JRX2023} T.L. Jin, X. Ros-Oton and J.G. Xiong, Optimal regularity and fine asymptotics for the porous medium equation in bounded domains. J. Reine Angew. Math. 809 (2024), 269-300.

\bibitem{KSU2012} T. Kuusi, J. Siljander and J.M. Urbano, Local H\"{o}lder continuity for doubly nonlinear parabolic equations. Indiana Univ. Math. J. 61 (2012), no. 1, 399-430.

\bibitem{KL2000} J. Kinnunen and J.L. Lewis, Higher integrability for parabolic systems of $p$-Laplacian type. Duke Math. J. 102 (2000), no. 2, 253–271.

\bibitem{K1988} Y.C. Kwong, Interior and boundary regularity of solutions to a plasma type equation. Proc. Amer. Math. Soc. 104 (1988), no. 2, 472-478.

\bibitem{L1994} G.M. Lieberman, Gradient estimates for a new class of degenerate elliptic and parabolic equations. Ann. Scuola Norm. Sup. Pisa Cl. Sci. (4) 21 (1994), no. 4, 497-522.

\bibitem{L2006} G.M. Lieberman, H\"{o}lder regularity for the gradients of solutions of degenerate parabolic systems. Ukr. Mat. Visn. 3 (2006), no. 3, 352–373.

\bibitem{L2019} P. Lindqvist, Notes on the stationary $p$-Laplace equation. SpringerBriefs in Mathematics. Springer, Cham, 2019.

\bibitem{PS2008} Y. Peres and S. Sheffield, Tug-of-war with noise: a game-theoretic view of the $p$-Laplacian. Duke Math. J. 145 (2008), no. 1, 91-120.

\bibitem{PV1993} M.M. Porzio and V. Vespri, H\"{o}lder estimates for local solutions of some doubly nonlinear degenerate parabolic equations.
J. Differential Equations 103 (1993), no. 1, 146-178.


\bibitem{U2008} J.M. Urbano, The method of intrinsic scaling. A systematic approach to regularity for degenerate and singular PDEs. Lecture Notes in Mathematics, 1930. Springer-Verlag, Berlin, 2008.

\bibitem{V2007} J.L. V\'{a}zquez, The porous medium equation. Mathematical theory. Oxford Mathematical Monographs. The Clarendon Press, Oxford University Press, Oxford, 2007.

\bibitem{VV2022} V. Vespri and M. Vestberg, An extensive study of the regularity of solutions to doubly singular equations. Adv. Calc. Var. 15 (2022), no. 3, 435-473.

\end{thebibliography}
\end{document}